\newtheorem {theorem}    {Theorem}[section]
\newtheorem {lemma}      [theorem]    {Lemma}
\newtheorem {corollary}  [theorem]    {Corollary}
\newtheorem {proposition}[theorem]    {Proposition}
\newtheorem {example}[theorem]    {Example}
\newcommand{\bb}{\mathbb}
\renewcommand{\rm}{\mathrm}
\newcommand{\cal}{\mathcal}
\newcommand{\GGL}{\mathrm{GL}}
\newcommand{\UU}{\mathrm{U}}
\renewcommand{\sp}{\mathrm{Sp}}
\renewcommand{\o}{\mathrm{O}}
\newcommand{\so}{\mathrm{SO}}
\newcommand{\Fq}{\bb{F}_q}
\newcommand{\fq}{(\bb{F}_q)}
\theoremstyle{definition}
\newtheorem{definition}[theorem]{Definition}
\newtheorem{remark}[theorem]{Remark}
\newcommand{\pl}{\pi_{\Lambda}}
\newcommand{\prll}{\pi_{\rho,\Lambda,\Lambda'}}
\newcommand{\prllc}{\pi_{\rho,\Lambda_1,\Lambda_1'}}
\newcommand{\pw}{{\pi}}
\newcommand{\wla}{\widetilde{\Lambda}}
\newcommand{\CD}{{\mathcal{D}}}
\newcommand{\CQ}{{\mathcal{Q}}}
\newcommand{\CJ}{{\mathcal {J}}}
\newcommand{\ee}{\epsilon_{-1}}
\numberwithin{equation}{section}
\newcommand{\fg}{\mathfrak{g}}
\newcommand{\co}{\mathcal{O}}
\newcommand{\wco}{\widetilde{\mathcal{O}}}
\begin{document}

\title{Wavefront sets and descent method for finite symplectic groups}

\date{\today}

\author[Zhifeng Peng]{Zhifeng Peng}

\address{School of Mathematical Science, Soochow University, Suzhou 310027, Jiangsu, P.R. China}
\email{zfpeng@suda.edu.cn}

\author[Zhicheng Wang]{Zhicheng Wang$^*$}

\address{School of Mathematical Science, Soochow University, Suzhou 310027, Jiangsu, P.R. China}

\email{11735009@zju.edu.cn}
\subjclass[2010]{Primary 20C33; Secondary 22E50}

\begin{abstract}
In \cite{JZ1}, D. Jiang and L. Zhang proposed a conjecture which related the wavefront sets and the descent method in the local fields case. Recently, in \cite{JLZ}, they and D. Liu define the arithmetic wavefront set of certain irreducible admissible representation $\pi$ of a classical group $G(k)$ defined over local field $k$, which is a subset of $k$-rational nilpotent orbits of the Lie algebra of $G(k)$, by the arithmetic structures of the enhanced L-parameter of $\pi$. These arithmetic structures are based on the rationality of the local Langlands correspondence and the local Gan-Gross-Prasad conjecture. They also prove that the arithmetic wavefront set is an invariant of $\pi$ (it is independent of the choice of the Whittaker datum \cite[Theorem 1.1]{JLZ}), and propose several conjectures to describe the relationship between arithmetic wavefront sets, analytic wavefront sets and algebraic wavefront sets.

In this paper we study wavefront sets of irreducible representations for finite symplectic groups and describe the relationship between wavefront sets,  descent method and finite Gan-Gross-Prasad problem. The finite fields case of Gan-Gross-Prasad problem can be calculated explicitly \cite{LW3,Wang1,Wang2}. It allows us to calculate the multiplicity of an irreducible representation in the generalised Gelfand-Graev representation corresponding to certain nilpotent orbits which are finite fields analogies of the arithmetic wavefront sets. In particular, for cuspidal representations, we give certain multiplicity one theorem and show that the finite fields analogy of arithmetic wavefront sets coincides with the wavefront sets in the sense of G. Lusztig and N. Kawanaka.
\end{abstract}

\maketitle

\section{Introduction}

Let $\overline{\mathbb{F}}_q$ be an algebraic closure of a finite field $\mathbb{F}_q$, which is of characteristic $p>2$. Consider a connected reductive algebraic group $G$ defined over $\Fq$ and its Lie algebra $\fg$, with Frobenius map $F$. Let $Z$ be the center of $G$. We will assume that $q$ is large enough such that the main theorem in \cite{S} and the Jacobson-Morozov
theorem hold.
   Let $H$ be a subgroup of $G$. Let $\pi$ (resp. $\sigma$) be a representation of $G^F$ (resp. $H^F$). We write
\[
\langle \pi,\sigma \rangle_{H^F} = \dim \mathrm{Hom}_{H^F}(\pi,\sigma ).
\]
If $G=H$, we write $\langle \pi,\sigma \rangle$ instead of $\langle \pi,\sigma \rangle_{H^F}$ for short.
By abuse of notation, for a representation $\pi$, we also denote its character by $\pi$ when no confusion arise.

Kawanaka proposed to prove Ennola's conjecture by studying \emph{Generalised Gelfand-Graev Representation} or GGGR for abbreviation. Now, GGGRs have been extremely useful in many other contexts of representation theory, see a survey \cite{Ge}.
For a nilpotent element $X \in \fg^F$ we may associate to it a ${\mathfrak{sl}}$-2 triple $\gamma=\{X,H,Y\}$,
\[
[H,X]=2X,\qquad[H,Y]=-2Y,\qquad[X,Y]=H
\]
and a representation $\Gamma_\gamma$ of $G^F$
called the associated GGGR (see \cite[Section 2]{L7} or section \ref{sec5} for details). In this paper, we focus on GGGRs and wavefront sets of irreducible representation (over $\bb{C}$) for symplectic groups over finite fields.

Let $\pi$ be an irreducible representation of $G^F$. The \emph{Kawanaka wavefront set} $\wco$
of $\pi$ is defined to be an $F$-stable nilpotent orbit $\wco$ satisfying
 \begin{itemize}
 \item[] (1)  $\langle \pi,\Gamma_\gamma\rangle\ne 0$ for some $\gamma=\{X,H,Y\}$ with $X\in \wco^F$;
\item[] (2) $\langle \pi,\Gamma_{\gamma'}\rangle\ne 0$ for some $\gamma'=\{X',H',Y'\}$ with $X'\in \wco^{\prime F}$ implies $\rm{dim}\wco'\le\rm{dim}\wco$.
\end{itemize}
If $\gamma$ is trivial, then we have $\langle \pi,\Gamma_\gamma\rangle= \rm{dim}\pi\ne 0$; hence the existence of wavefront set of an irreducible representation is obvious. Moreover, for each irreducible representation, the uniqueness of wavefront set was conjectured by Lusztig \cite{L2}, Kawanaka \cite{K2}, respectively. If this conjecture holds, then we say the wavefront set of $\pi$ is well defined. When $p$ is sufficiently large, Lusztig has shown in \cite{L7} that the wavefront sets are well defined and coincide with the ``dual'' of the unipotent
support. The existence of the unipotent support was established for $p$ good by Geck \cite{Ge1} and for any $p$ by Geck and Malle \cite{GM}.
Kawanaka also conjectured a description of the wavefront sets via Lusztig correspondence. This conjecture was proved by Lusztig in \cite{L7} and was solved in the later work of Achar and Aubert in \cite{AA} and Taylor in \cite{T} with a geometric refinement of the condition (2).
 This result implies that there is deep connection between irreducible representations of $G^F$ and the geometry of the algebraic group $G$.

In the local fields case, the notion of the wavefront set was first introduced by R. Howe in 1981 \cite{H2} for archimedean local fields.
In the p-adic case, the study of wavefront sets is related to the theory of automorphic forms (via their Fourier coefficients), and has found important applications in both areas. For example see \cite{Sh1,NPS,K2,Y,GRS,GRS2,JZ1,JZ2,JLZ}. Let $k$ be a p-adic field and $G$ be a reductive group defined over $k$.
Under the assumption that the residue characteristic is zero or ``sufficiently large'', DeBacker \cite{D2} parametrizes the set of $k$-rational nilpotent orbits of $G$ by equivalence classes of objects coming from the Bruhat-Tits building of the group. For p-adic fields, there are the following variants of wavefront sets: analytic, algebraic, unramified, and arithmetic wavefront sets. Let us introduce them briefly.

(1) {\bf Analytic wavefront set.} The p-adic analytic wavefront set of a representation $\pi$ is a collection of certain $k$-rational nilpotent orbits of $G(k)$ depending on the Harish-Chandra-Howe local character expansion \cite{H,HC,D1} of $\pi$. The analytic wavefront sets for $\GGL_n$ and for irreducible subquotients of the regular principal series for split classical groups have been calculated in \cite{MW}. However, it is extremely difficult to calculate the analytic wavefront sets in general.

(2) {\bf Algebra wavefront set.} The difficulty of calculation of analytic wavefront sets leads to consider $\overline{k}$-rational nilpotent orbits and so-called algebraic wavefront sets of $\pi$. (Kawanaka wavefront sets also focus on the stable nilpotent orbits, and algebra wavefront sets the corresponding notion for local fields.) In \cite{MW}, M\oe glin and Waldspurger define the algebraic wavefront set by means of the generalized Whittaker models. For classical groups,
algebraic wavefront sets must always be a special orbit \cite{M}, and Waldspurger \cite{Wal2,Wal3}, computed the wavefront sets of unipotent representations of $\so_{2n+1}$.

(3) {\bf Unramified wavefront sets.} Barbasch and Moy classified rational nilpotent orbits in their seminal paper \cite{BM} and provide relationship between the analytic wavefront set and the Kawanaka wavefront set. Applying their description of wavefront sets, Okada \cite{O} calculated the unramified wavefront sets which are intermediate to the analytic and algebraic wavefront sets of spherical Arthur representations of split p-adic reductive groups.

 (4) {\bf Arithmetic wavefront set.} In \cite{JZ1}, D. Jiang and L. Zhang proposed a conjecture of wavefront sets and they suggested that in the local fields case, one can obtain the wavefront set of an irreducible representation in a generic L-packet via the local descent method. Recently, D. Jiang, D. Liu and L. Zhang \cite{JLZ} define the arithmetic wavefront set of certain irreducible admissible representation $\pi$ of a classical group over local field by the arithmetic structures of the enhanced L-parameter of $\pi$. Their definition of the arithmetic wavefront set based on the rationality of the local Langlands correspondence and the local Gan-Gross-Prasad conjecture. They also prove that the arithmetic wavefront set is an invariant of $\pi$ (it is independent of the choice of the Whittaker datum \cite[Theorem 1.1]{JLZ}), and propose several conjectures to describe the relationship between the arithmetic wavefront sets, the analytic wavefront sets and the algebraic wavefront sets. To be more precise, the set of the maximal members of the arithmetic wavefront set coincides with the set of maximal members of the analytic wavefront set and the algebraic wavefront set.

 In this paper, we shall prove some cases of finite fields analogy of their conjectures by known results of the finite Gan-Gross-Prasad problem \cite{LW2,LW3,LW4,Wang1,Wang2}, and calculate Kawanaka wavefront sets for some irreducible representations.

 In \cite{GRS,Gi} D. Ginzburg, S. Rallis and D. Soudry used the descent method to calcualte the wavefront set of automorphic cuspidal representation. The method developed in \cite{GRS} implied one should be able to read the wavefront set of representations by composition of descents (cf. \cite[Conjecture 1.8]{JLZ} for the $p$-adic case). In the finite fields case, for instance, consider finite unitary groups. By Lusztig's classification of irreducible representations and Springer correspondence, one can associate a partition $\lambda$ with an irreducible unipotent representation $\pi_\lambda$ and the wavefront set of $\pi_\lambda$ is $\lambda^t$, the transpose of $\lambda$, which is equal to the partition we get by the descent method. However, one may not get the wavefront set directly in this way for finite symplectic groups (see Example \ref{ue}), which is quite different from finite unitary groups. This shows how tricky things can be for symplectic groups.

Recently, the behaviour of the wavefront set and the generalized Whittaker models under Theta correspondence was studied in \cite{AKP,GZ,LM}. In our pervious work \cite{LW3,Wang1}, we have shown the relationship between the descent method and Theta correspondence over finite fields. Motivated by the above works, in this paper, we will prove the finite fields analogy of Conjecture 1.8 in \cite{JZ1} and Conjecture 1.6 in \cite{JLZ} for certain irreducible representations which include some important cases, like cuspidal representations and unipotent representations. We give explicit description of wavefront sets for certain irreducible representations (for example cuspidal representations) and get some multiplicity one results. Moreover, we will not only read the Kawanaka wavefront sets by the descent method, but also get some information about the $F$-rational orbits corresponding to these representations. The main theorem in this paper about $F$-rational nilpotent orbits has potential to calculate the wavefront sets in the p-adic case.

 \subsection{Main results}

Let $G$ be a connected reductive algebraic group defined over $\Fq$ with Frobenius map $F$.
Let $G^*$ be the dual group of $G$ (a finite fields analogue of Langlands dual group see e.g. \cite[P.111]{C}). We still denote the Frobenius endomorphism of $G^*$ by $F$.
For a semisimple element $s \in G^{*F}$, define Lusztig series as follows:
\[
\mathcal{E}(G^F,s) = \{ \pi \in \rm{Irr}(G^F)  :  \langle \pi, R_{T^*,s}^G\rangle \ne 0\textrm{ for some }T^*\textrm{ containing }s \}
\]
where $T^*$ is an $F$-stable maximal torus of $G^*$ and $R_{T^*,s}^G$ is the Deligne-Lusztig correspondence to pair $(T^*,s)$.
And
\[
\rm{Irr}(G^F)=\coprod_{(s)}\mathcal{E}(G^F,s)
\]
where $(s)$ runs over the conjugacy classes of semisimple elements.

In this paper, we only consider the following three cases:
\begin{itemize}

\item (A) $s$ has no eigenvalues $\pm1$;

\item (B) either $s$ only has eigenvalues $1$ or $s$ only has eigenvalues $-1$;

\item (C) $\pi$ is cuspidal.
\end{itemize}

Recently, in the local field case, theta lifting was successfully used by a number of authors to understand nilpotent invariants of representations \cite{BMSZ1,BMSZ2,GZ,LM,Z}, especially for unipotent representations. In finite field case, we expect that we can get the general case from above three cases by the similar theta lifting argument in \cite{GZ,Z}. And we will study the general case in our further work by theta lifting.

From now on, we use wavefront sets instead of Kawanaka wavefront sets for simplicity. For an irreducible representation $\pi$, we can associate it with an array $\hat{\ell}{(\pi)}$ via the descent method in Section \ref{sec5.4}. In this paper, a partition is an array $\lambda=(\lambda_i)$ with $\lambda_i\ge \lambda_{i+1}$ for every $i$. We will conclude that the array $\hat{\ell}{(\pi)}$ is a partition in the Case (A) and Case (C). However, for Case (B), the array $\hat{\ell}{(\pi)}$ may not be a partition. So we need a modification, and then we will get a partition $\widetilde{\ell}{(\pi)}$ in Section \ref{sec6.2}.

Recall that the $F$-stable nilpotent orbits of symplectic group $\sp_{2n}$ are parameterized by all partitions
of $2n$ where odd numbers occur with even multiplicity. For such a partition $\lambda$, we denote the corresponding $F$-stable nilpotent orbit by $\wco_{\lambda}$.

\begin{theorem}\label{main1}
 Let $\pi\in\rm{Irr}(\sp_{2n}\fq)$. Let $\ell_{0,\psi}^{\rm{FJ}}$ be the first occurrence index of $\pi$ in the descent with respect to certain character $\psi$ of $\Fq$ (see section \ref{sec5.4} for detail).

(i) Assume that $\pi\in \mathcal{E}(G^F,s)$ is in Case (A) and $C_{G^{*F}}(s)$ is a product of general linear groups. Then $\wco_{\hat{\ell}{(\pi)}}$ is the wavefront set of $\pi$. Moreover, every part of $\hat{\ell}{(\pi)}$ is even and the first part of $\hat{\ell}{(\pi)}$ is equal to $2\ell_{0,\psi}^{\rm{FJ}}$ for any character $\psi$ of $\Fq$.

(ii) Assume that $\pi$ is in Case (B). Then $\wco_{\widetilde{\ell}{(\pi)}}$ is the wavefront set of $\pi$. Moreover, the first part of $\widetilde{\ell}{(\pi)}$ is either $2\ell_{0,\psi}^{\rm{FJ}}$ or $2\ell_{0,\psi}^{\rm{FJ}}+1$.

(iii) Assume that $\pi$ is in Case (C). Then $\wco_{\hat{\ell}{(\pi)}}$ is the wavefront set of $\pi$. Moreover, every part of $\hat{\ell}{(\pi)}$ is even and the first part of $\hat{\ell}{(\pi)}$is equal to $2\ell_{0,\psi}^{\rm{FJ}}$ with respect to certain character $\psi$ of $\Fq$.
\end{theorem}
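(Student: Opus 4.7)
The plan is to prove all three cases in parallel by matching the partition produced by the descent tower against the partition that parametrises the Kawanaka wavefront set, the latter being computable from Lusztig correspondence via the theorem of Lusztig--Achar--Aubert--Taylor on unipotent supports. The first step is to reformulate $\hat{\ell}{(\pi)}$ intrinsically: each entry $\ell_i$ is the first occurrence index of an irreducible constituent $\pi_i$ obtained by Fourier--Jacobi descent from the previous $\pi_{i-1}$, so the array records the successive ``heights'' of a canonical chain of smaller symplectic representations. Using the explicit finite Gan--Gross--Prasad multiplicity formulas established in \cite{LW3,Wang1,Wang2}, each $\ell_i$ can be read directly off the Lusztig data (a symbol, or a tuple of partitions for the $\GGL$-blocks in the centraliser) of $\pi_{i-1}$.

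The lower bound $\langle \pi,\Gamma_\gamma\rangle\ne 0$ for $X\in\wco_{\hat{\ell}{(\pi)}}^F$ will be obtained by building the GGGR from the descent tower. A nilpotent element of partition type $\hat{\ell}{(\pi)}=(\ell_1,\ell_2,\ldots)$ in $\sp_{2n}$ can be realised as a stack of Jacobi blocks attached to a parabolic with Levi $\GGL_{\ell_1/2}\times\cdots\times\sp_{2m}$ (with an appropriate adjustment in Case (B)), and by the relation between GGGRs and iterated FJ models used in our previous work, $\Gamma_\gamma$ unfolds into the composition of FJ descents by a Mackey/Frobenius reciprocity argument. Nonvanishing of $\langle\pi,\Gamma_\gamma\rangle$ then follows inductively from the nonvanishing of the first occurrence at each stage. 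This already forces the first part of $\hat{\ell}{(\pi)}$ (respectively $\widetilde{\ell}{(\pi)}$) to be at least $2\ell_{0,\psi}^{\rm{FJ}}$, and in Cases (A) and (C) to be exactly $2\ell_{0,\psi}^{\rm{FJ}}$, since the Lusztig-data analysis shows that every $\ell_i$ is even: in Case (A) because $C_{G^{*F}}(s)$ is a product of general linear groups so no symbol contribution can produce odd jumps, and in Case (C) because the cuspidal parametrisation by regular symbols is rigid enough to eliminate them. The upper bound (maximality condition (2)) follows by matching $\hat{\ell}{(\pi)}$ with the partition of the unipotent support dual computed from Lusztig correspondence, which in Cases (A) and (C) agrees by direct combinatorics with the descent output.

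The main obstacle is Case (B). Here $\pi\in\mathcal{E}(G^F,\pm 1)$ is (essentially) a unipotent representation, and Fourier--Jacobi descent of unipotent representations can produce ``jumps'' of size $1$ coming from odd entries in Lusztig symbols, so $\hat{\ell}{(\pi)}$ need not be monotone. The modification producing $\widetilde{\ell}{(\pi)}$ in Section \ref{sec6.2} averages pairs of consecutive entries that violate the partition condition, and the task is to verify that the resulting partition coincides with the Kawanaka wavefront set of $\pi$, which by Lusztig--Achar--Aubert--Taylor must be special and of the required parity. I expect this to rely on a careful case analysis on the parity and rank of the symbol of $\pi$, combined with the Gan--Gross--Prasad multiplicity rule, to track exactly when a pair $(\ell_i,\ell_{i+1})=(2a,2a+1)$ or $(2a+1,2a)$ arises; the ``$2\ell_{0,\psi}^{\rm{FJ}}$ or $2\ell_{0,\psi}^{\rm{FJ}}+1$'' dichotomy for the first part of $\widetilde{\ell}{(\pi)}$ will be a direct consequence of whether the first such pair involves the top entry. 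Once this combinatorial identification is in place, the GGGR argument of the previous paragraph transports the nonvanishing and maximality statements from Cases (A), (C) to Case (B).
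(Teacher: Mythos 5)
Your plan is closest to the paper's actual argument only in Case (B): there the paper does exactly what you describe, computing the wavefront set from the special symbol via Lusztig's unipotent-support theorem and matching it, by a case analysis on $\ell_1\ge\ell_2$ versus $\ell_1=\ell_2-1$, against the modified partition $\widetilde{\ell}(\pi)$. For Cases (A) and (C), however, the paper does \emph{not} compute the wavefront set independently on the Lusztig side; it argues directly from the descent tower, and your proposal both substitutes an uncarried-out combinatorial matching for that argument and, more seriously, waves away the central technical lemma. The step where you say ``$\Gamma_\gamma$ unfolds into the composition of FJ descents by a Mackey/Frobenius reciprocity argument'' is not a Mackey argument: the unipotent subgroup $G_{\ge 1.5}$ attached to an orbit of type $(2n_1,d_2,\dots,d_s)$ is not built from the subgroups attached to $(2n_1,1^{2(n-n_1)})$ and $(d_2,\dots,d_s)$ in any way visible to Frobenius reciprocity. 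One must carry out the Ginzburg--Rallis--Soudry root exchange (Lemma \ref{erL3}, proved by a Fourier expansion over the graded pieces $X_r$, $Y_r$ of two complementary abelian unipotent subgroups) to obtain
\[
\dim\cal{F}(\pi,\co_{\Gamma},\psi)=\dim\cal{F}\bigl(\cal{F}(\pi,\co_{(2n_1,1^{2(n-n_1)}),(q_1)},\psi),\co_{(d_2,\dots,d_s),(q_i')},\psi\bigr).
\]
Without this identity neither the nonvanishing of $\langle\pi,\Gamma_\gamma\rangle$ on $\wco_{\hat{\ell}(\pi)}$ nor the inductive reduction to $\CD^{\rm{FJ}}(\pi)$ goes through.

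The second gap concerns maximality. You never address how to exclude $F$-stable orbits with a doubled odd row $((2r+1)^2,\dots)$ where $2r+1$ exceeds $2\ell_{0,\psi}^{\rm{FJ}}$: in $\sp_{2n}$ odd rows occur in pairs, so such an orbit can dominate $\wco_{\hat{\ell}(\pi)}$ without containing any single row long enough to contradict the first occurrence index. The paper handles this by extending GRS Lemmas 2.2 and 2.4 from cuspidal $\pi$ to any $\pi$ not occurring in a parabolic induction from a generic representation of $\GGL_{r+1}$ (Lemmas \ref{ex1} and \ref{ex2}), and it is precisely here, through Proposition \ref{prop6.6}, that the hypothesis ``$C_{G^{*F}}(s)$ is a product of general linear groups'' is used. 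Your stated reason for that hypothesis (``no symbol contribution can produce odd jumps'') misidentifies its role: evenness of the parts of $\hat{\ell}(\pi)$ is automatic from the definition $(2\ell_1,2\ell_2,\dots)$, while the hypothesis is needed to bound the generic $\GGL$-rank of parabolic inductions containing $\pi$ so that the odd-row exclusion can be applied. If instead you insist on the ``match with Lusztig--Achar--Aubert--Taylor'' route for Case (A), you would need the description of wavefront sets of non-unipotent representations through the Lusztig correspondence and then the explicit combinatorial comparison with $\hat{\ell}(\pi)$, neither of which is supplied.
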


We have explicitly described the first occurrence index in descent method in our previous work \cite{Wang1,Wang2}, and the array $\hat{\ell}{(\pi)}$ or partition $\widetilde{\ell}{(\pi)}$ and the wavefront set of $\pi$ can be calculated explicitly.

Conjecture 1.8 in \cite{JZ1} implies that for an irreducible $\pi$ in a generic L-packet, the first part of the partition corresponding to the wavefront set of $\pi$ is equal to $2\ell_{0,\psi}^{\rm{FJ}}$. Conjecture 1.6 in \cite{JLZ} implies that $\wco_{\hat{\ell}{(\pi)}}$ is the wavefront set of $\pi$.
However, by Theorem \ref{main1}, it is not always the case. As an explanation of this phenomenon, one should note that in Jiang, Liu and Zhang's conjectures, they require that $\pi$ is a tempered representation. In the local field case, the descent method may not work on certain non-tempered representations. However, in the finite field case, normally we do not care about these analytical properties. But we have some irreducible representations with similar behavior.

\begin{example}\label{ue}
Consider the trivial representation $\bf{1}$ of $\sp_2\fq$. One can check that $\cal{D}^{\rm{FJ}}_\psi(\bf{1})=\sigma+\sigma'$ where both $\sigma$ and $\sigma'$ are irreducible of $\sp_2\fq$, and the descent of them both are the trivial representation of the trivial group $``\sp_0\fq"$. But the descent sequence index $\hat{\ell}(\pi)=(0,2)$ is not a partition. And the wavefront set is corresponding to the partition $(1,1)$ which is equal to $\widetilde{\ell}(\pi)$.
\end{example}

In above example, one can view the trivial representation $\bf{1}$ as a ``non-tempered'' representation in some sense, which explain that the descent method may not directly give us a partition or furthermore the wavefront set.

Since the descent method actually works on the $F$-rational points $G^F$, we will not only get the wavefront set $\wco$ of $\pi$, but also find out that for which $F$-rational nilpotent orbits $\co$ in the $F$-stable nilpotent orbit $\wco$, the representation $\pi$ appears in the generalised Gelfand-Graev representation corresponding to $\co$. The explicit construction of $\co$ will be found in subsection \ref{sec8.4}. In other words, we construct the generalized Whittaker model of $\pi$.

\begin{theorem} \label{main2}
(i) Assume that $\pi$ is in Case (A). Then for any $F$-rational nilpotent orbit $\co\subset \wco_{\hat{\ell}{(\pi)}}$, we have
\[
\langle \pi,\Gamma_\gamma\rangle=1
\]
where $\gamma=\{X,H,Y\}$ with $X\in \co$.

(ii) Assume that $\pi$ is in Case (B) and $\pi$ has an irreducible largest descent index (see section \ref{sec5.4} for detail). Then for any $F$-rational nilpotent orbit $\co\subset \wco_{\hat{\ell}{(\pi)}}$, we have
\[
\langle \pi,\Gamma_\gamma\rangle\le1
\]
where $\gamma=\{X,H,Y\}$ with $X\in \co$.

(iii) Assume that $\pi$ is in Case (C). Then for any $F$-rational nilpotent orbit $\co\subset \wco_{\hat{\ell}{(\pi)}}$, we have
\[
\langle \pi,\Gamma_\gamma\rangle\le1
\]
where $\gamma=\{X,H,Y\}$ with $X\in \co$.
\end{theorem}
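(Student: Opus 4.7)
The plan is to reduce each multiplicity $\langle \pi, \Gamma_\gamma \rangle$ to a chain of Fourier-Jacobi descent multiplicities and then bound the resulting expression using the finite Gan-Gross-Prasad results of \cite{LW3,Wang1,Wang2}. Specifically, writing $\hat{\ell}(\pi) = (\ell_1, \ell_2, \ldots)$, the nilpotent $X \in \co$ admits a natural partial Jordan filtration whose first layer corresponds to a Jacobi-type unipotent subgroup of $\sp_{2n}$ of rank $\ell_1/2$. Mackey theory, together with the stage-by-stage construction of $\Gamma_\gamma$ along this filtration, should yield an identity of the form
\[
\langle \pi, \Gamma_\gamma \rangle_{\sp_{2n}^F} = \langle \pi_0, \Gamma_{\gamma'} \rangle_{\sp_{2m}^F},
\]
where $m = n - \ell_1/2$, $\pi_0$ is the Fourier-Jacobi descent of $\pi$ at level $\ell_1/2$ with respect to a character $\psi$ determined by $\co$, and $\gamma'$ is an $\mathfrak{sl}_2$-triple in $\sp_{2m}$ whose nilpotent part lies in the rational orbit obtained from $\co$ by stripping off the first Jordan column.

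With this reduction in hand, I would argue by induction on $n$, feeding the inductive hypothesis through each irreducible summand of $\pi_0$, whose Lusztig series and descent invariants are already controlled by Theorem \ref{main1}. In Case (A), the assumption that $C_{G^{*F}}(s)$ is a product of general linear groups forces $\pi_0$ to be irreducible and to again satisfy Case (A) on $\sp_{2m}$, by the descent computation in \cite{Wang2}; iterating yields the exact equality $\langle \pi, \Gamma_\gamma \rangle = 1$. In Case (C), cuspidality propagates through Fourier-Jacobi descent by \cite{LW3}, so every stage remains cuspidal, and the finite GGP bound gives multiplicity at most one at each step. In Case (B), the hypothesis that the largest descent index is irreducible is precisely what enforces multiplicity at most one in the first descent, after which the inductive argument proceeds as in the other two cases.

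The main obstacle will be tracking the $F$-rational orbit data through the descent sequence. A stable orbit in $\sp_{2n}$ typically splits into several $F$-rational orbits, parameterized by discriminants of quadratic forms attached to odd-length Jordan blocks, and the Fourier-Jacobi character $\psi$ at each stage selects precisely one such discriminant. I would need to establish an explicit correspondence between $F$-rational orbits inside the stable wavefront set and admissible sequences of descent characters (the content of Section \ref{sec8.4}), and then verify that the GGP multiplicity computed along each sequence matches the GGGR multiplicity for the corresponding rational orbit. A secondary wrinkle peculiar to Case (B) is that the modification $\hat{\ell}(\pi) \mapsto \widetilde{\ell}(\pi)$ can shift a part by $\pm 1$, so the combinatorial accounting of descent choices versus rational orbits must be shown to still collapse into a well-defined bound on the relevant multiplicity.
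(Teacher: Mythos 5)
Your proposal follows essentially the same route as the paper: the coinvariants identity you expect from ``Mackey theory'' is precisely the exchanging-roots Lemma \ref{erL3} (established by the Ginzburg--Rallis--Soudry root-exchange/Fourier-expansion technique rather than plain Mackey theory, since the unipotent subgroups attached to $\co$ and to the composed orbits are genuinely different), after which your induction on $n$ through the irreducible descents, together with the discriminant bookkeeping of Theorem \ref{ra} and Subsection \ref{sec8.4}, is exactly the argument of Theorems \ref{am}, \ref{am2} and \ref{main3}. One small correction: in Case (A) the irreducibility of the descent and its persistence in Case (A) hold without assuming $C_{G^{*F}}(s)$ is a product of general linear groups (Theorem \ref{a1}(ii) and Theorem \ref{d1}); that hypothesis is needed only to identify $\wco_{\hat{\ell}(\pi)}$ as the wavefront set, not for the multiplicity-one statement, so the induction covers all of Case (A) as the theorem requires.
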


\begin{remark}
In above theorem, for each $F$-rational nilpotent orbit $\co$, the multiplicity $\langle \pi,\Gamma_\gamma\rangle$ could be calculated explicitly by our proof of main results. To be more precise, for an irreducible representation $\pi\in\rm{Irr}(\sp_{2n}\fq)$, let $\psi$ be a non-trivial character of $\Fq$ such that $\ell_{0,\psi}^{\rm{FJ}}(\pi)\ge\ell_{0,\psi'}^{\rm{FJ}}(\pi) $ for any character $ \psi'$ of $\Fq$. Let $\pi'$ be an irreducible component of $\CD^\rm{FJ}_{\ell_0, \psi}(\pi)$ (see subsection \ref{sec5.4} for detail). Here we can describe each irreducible component of $\CD^\rm{FJ}_{\ell_0, \psi}(\pi)$ explicitly (see section \ref{sec6}). Let $\co'$ be the $F$-rational nilpotent orbit in the $F$-stable orbit of the wavefront set of $\pi$ such that
$\cal{F}(\pi',\co',\psi' )\ne0$ (see subsection \ref{sec5.3}). According to the classification of the $F$-rational nilpotent orbits of $\sp_{2n}\fq$ in Theorem \ref{ra}, we assume that $\co'$ is corresponding to $(\lambda, (q_i))$. Then
\[
\langle \pi,\Gamma_\gamma\rangle=1
\]
if and only if one can pick an irreducible component $\pi'$ of $\CD^\rm{FJ}_{\ell_0, \psi}(\pi)$ and $\co'$ such that $\co$ is corresponding to
\[
\left\{
\begin{array}{ll}
(\ell_{0,\psi}^{\rm{FJ}}(\pi),\lambda_1,\lambda_2,\cdots),(\epsilon,\epsilon q_1, \epsilon q_2,\cdots)),&\textrm{if }\ell_{0,\psi}^{\rm{FJ}}(\pi)>\lambda_1;\\
(\ell_{0,\psi}^{\rm{FJ}}(\pi),\lambda_1,\lambda_2,\cdots),((-1)^{\#\{i|q_i=\ell_{0,\psi}^{\rm{FJ}}(\pi)\}}\epsilon,\epsilon q_2,\cdots)),&\textrm{otherwise}
\end{array}\right.
\]
where
\[
\epsilon=\left\{
\begin{array}{ll}
+,&\textrm{if $\psi'$ is in the square class of }\psi;\\
-,&\textrm{otherwise}.
\end{array}\right.
\]
Hence, we can calculate the multiplicity $\langle \pi,\Gamma_\gamma\rangle$ by induction on $n$.

For example, by the main theorem in \cite{Wang1} and our Theorem \ref{main2}, for the unipotent cuspidal representation $\pi$ of $\sp_{2k(k+1)}\fq$,
\[
\langle \pi,\Gamma_\gamma\rangle=1
\]
if and only if
$\co$ is corresponding to $((2k,2k,2(k-1),2(k-1),2(k-2),2(k-2),\cdots),(q_i))$ with $q_i=q_{i+1}$ for odd $i$. This example shows that there exists an irreducible representation $\pi$ such that it does not appear in the generalised Gelfand-Graev representation of all of $F$-rational orbits $\co\subset \wco_{\hat{\ell}{(\pi)}}$. Comparing this example to Theorem \ref{main2} (i), and one shall find out how tricky things can be for Case (B). On the other hand, since $\pi$ is cuspidal, this example shows that there exists $F$-rational orbits $\co\subset \wco_{\hat{\ell}{(\pi)}}$ such that $\pi$ does not appear in the generalised Gelfand-Graev representation of $\co$. Therefore the inequality in Theorem \ref{main2} (iii) can not be equality.
\end{remark}

 In \cite{LZ}, B. Liu and Q. Zhang mention that the theorem of multiplicity one of the representations implies the existence of certain twisted
gamma factors as in \cite{Ro}. In \cite{N}, Nien proved a local converse theorem for $\GGL_n\fq$ by using the local
gamma factors and the proof is based on the Whittaker model and Bessel functions.

This paper is organized as follows. In section \ref{sec2}, we describe some preliminaries and the partitions and symbols which will be used to parameterize irreducible representations of $G^F$. In Section \ref{sec3}, we recall the theory of Deligne-Lusztig characters and the Lusztig correspondence which will be used in Section \ref{sec4}. In Section \ref{sec4},  we give a parametrization of irreducible representations of symplectic groups over finite fields. In Section \ref{sec5}, we first review the parametrization of nilpotent orbits in the classical Lie algebras, following the book by Collingwood and McGovern \cite{CM} and the construction of the generalised Gelfand-Graev representations, following by Luszig \cite{L7} and Gomez-Zhu \cite{GZ}. Then we describe the relationship between the generalised Gelfand-Graev representation and the descent method. In Section \ref{sec6}, we review our pervious descent results and calculate the array $\hat{\ell}(\pi)$ and the partition $\widetilde{\ell}(\pi)$ of $\pi\in\rm{Irr}(G^F)$. In Section \ref{sec7}, we review \cite{GRS} and we extend their results to obtain a finite fields version of ``Exchanging roots Lemma'' which are main tools to prove our theorems. In Section \ref{sec8}, we prove our main theorems.

\subsection*{Acknowledgement} We acknowledge generous support provided by National natural Science Foundation of PR China (No. 12071326) and China Postdoctoral Science Foundation (No. 2021TQ0233, No. 2021M702399).
\section{Preliminaries}\label{sec2}
\subsection{Partitions}
Let $\lambda=(\lambda_1,\lambda_2,\cdots,\lambda_k)$ be a partition of $n$, and let $|\lambda|:=\sum_i\lambda_i$. For simplicity of notations, we write $\lambda=(a_1^{k_1},\cdots,a_l^{k_l})$ where $a_j$ runs over $\{\lambda_i\}$ and $k_j=\#\{i|\lambda_i=a_j\}$.
We set
\[
a\lambda:=(a\lambda_1,a\lambda_2,\cdots,a\lambda_k) \ \textrm{ with }a\in\bb{Z}
\]
and
\[
\lambda^{-j}:=(\lambda_1-j,\lambda_2-j,\cdots,\lambda_k-j).
\]
It is obvious that one can get $\lambda^{-j}$ by removing the first $j$ columns of $\lambda$. As is standard, we realize partitions as Young diagrams, and denote by $\lambda^t=(\lambda^t_1,\lambda^t_2,\cdots)$ the transpose of $\lambda$. Then $(\lambda^{-j})^t_1=\lambda^t_{j+1}$.

To each partition $\lambda=(\lambda_1,\cdots,\lambda_k)$, we set
\[
\lambda^* = (\lambda^*_i),\textrm{ defined by }\lambda^*_i:= \lambda_i+(k-i), \textrm{ for }1 \le i \le k.
\]
We then divide $\lambda^*$ into its odd parts $(\lambda^*_{\rm{odd},i})$ and even parts $(\lambda^*_{\rm{even},i})$.
Let
\[
(\eta,\zeta)=(\eta_1,\cdots,\eta_m,\zeta_1,\cdots,\zeta_r)
\]
 be a bi-partition such that
\[
\lambda^*_{\rm{odd},i}=2\eta^*_i+1\textrm{ and }\lambda^*_{\rm{even},i}=2\zeta^*_i
\]
with $\eta^*_i:=\eta_i+(m-i)$ and $\zeta^*_i:=\eta_i+(r-i)$. Thus we obtain a injective map
\begin{equation}\label{p1}
\psi:\lambda\to (\eta,\zeta).
\end{equation}

\subsection{Symbols}
We follow the notation of \cite{P3}. The notation is slightly different from that of \cite{L1}.

A symbol is an array of the form
\[
\Lambda=
\begin{pmatrix}
A\\
B
\end{pmatrix}
=
\begin{pmatrix}
a_1,a_2,\cdots,a_{m_1}\\
b_1,b_2,\cdots,b_{m_2}
\end{pmatrix}
\]
of two finite sets $A$, $B$ of natural numbers (possibly empty) with $a_i, b_i\ge0$, $a_i>a_{i+1}$ and $b_i>b_{i+1}$.

The rank and defect of a symbol $\Lambda$ are defined by
\[
\begin{aligned}
&\rm{rank}(\Lambda)=\sum_{a_i\in A}a_i+\sum_{b_i\in B}b_i-\left\lfloor\left(\frac{|A|+|B|-1}{2}\right)^2\right\rfloor, \\
&\rm{def}(\Lambda)=|A|-|B|
\end{aligned}
\]
where $|X|$ denotes the cardinality of a finite set $X$. Note that the definition of $\rm{def}(\Lambda)$ differs from that of \cite[P.133]{L1}.

For a symbol $\Lambda=\begin{pmatrix}
A\\
B
\end{pmatrix}$, let $\Lambda^*$ (resp. $\Lambda_*$) denote the first row (resp. second row) of $\Lambda$, i.e. $\Lambda^*=A$ and $\Lambda_*=B$. For a symbol $\Lambda=\begin{pmatrix}
A\\
B
\end{pmatrix}$, let
$\Lambda^t=\begin{pmatrix}
B\\
A
\end{pmatrix}$.

Define an equivalence relation generated by the rule
\[
\begin{pmatrix}
a_1,a_2,\cdots,a_{m_1}\\
b_1,b_2,\cdots,b_{m_2}
\end{pmatrix}
\sim
\begin{pmatrix}
a_1+1,a_2+1,\cdots,a_{m_1}+1,0\\
b_1+1,b_2+1,\cdots,b_{m_2}+1,0
\end{pmatrix}.
\]
Note that the defect and rank are functions on the set of equivalence classes of symbols.

A symbol
\[
\cal{Z}=\begin{pmatrix}
a_1,a_2,\cdots,a_{m_1}\\
b_1,b_2,\cdots,b_{m_2}
\end{pmatrix}
\]
with $\rm{def}(\cal{Z}) = 0,1$ is called \emph {special} if
\[
a_1\ge b_1\ge a_2\ge b_2\ge\cdots.
\]
\subsection{Bi-partitions}\label{sec4.2}
For two partitions $\lambda=[\lambda_1,\lambda_2,\cdots,\lambda_k]$ and $\mu=[\mu_1,\mu_2,\cdots,\mu_l]$, we denote
\[
\lambda\preccurlyeq\mu\quad\textrm{if }\mu_i-1\le \lambda_i \le\mu_i\textrm{ for each }i.
\]

Let $\mathcal{P}_2(n)=\left\{\begin{bmatrix}
\lambda\\
\mu
\end{bmatrix}\right\}$
denote the set of bi-partitions of $n$ where $\lambda$, $\mu$ are partitions
and $|\lambda| + |\mu| = n$. To each symbol we can associate a bi-partition as follows:
\[
\Upsilon: \Lambda=\begin{pmatrix}
a_1,a_2,\dots,a_{m_1}\\
b_1,b_2,\dots,b_{m_2}
\end{pmatrix}
\mapsto
\begin{bmatrix}
a_1-(m_1-1),a_2-(m_1-2),\dots,a_{m_1-1}-1,a_{m_1}\\
b_1-(m_2-1),b_2-(m_2-2),\dots,b_{m_2-1}-1,b_{m_2}
\end{bmatrix}=\begin{bmatrix}
\lambda\\
\mu
\end{bmatrix}.
\]
We write
$\Upsilon(\Lambda)^*=\lambda$ and $\Upsilon(\Lambda)_*=\mu.$
Then we have a bijection£º
\begin{equation}\label{bp}
\Upsilon:\mathcal{S}_{n,\beta}\to \left\{
\begin{array}{ll}
\mathcal{P}_2(n-(\frac{\beta+1}{2})(\frac{\beta-1}{2})), &  \textrm{if }\ \beta\textrm{ is odd},\\
\mathcal{P}_2(n-(\frac{\beta}{2})^2), & \textrm{if }\ \beta\textrm{ is even}.
\end{array}\right.
\end{equation}
where $\mathcal{S}_{n,\beta}$ denotes the set of symbols of rank $n$ and defect $\beta$.

Let
$
(\eta,\zeta)=(\eta_1,\cdots,\eta_m,\zeta_1,\cdots,\zeta_r)
$
 be a bi-partition. We can put $r=m-1$ by adding zeros as parts where necessary. We then associate $(\eta,\zeta)$ with a symbol $\Lambda = \Lambda_{\eta,\zeta}$ of defect 1 to be defined by
\[
\begin{pmatrix}
\eta_1+(m-1),\eta_2+(m-2),\dots,\eta_{m-1}+1,\eta_m\\
\zeta_1+(r-1),\zeta_2+(r-2),\dots,\zeta_{r-1}+1,\zeta_{r}
\end{pmatrix}.
\]
It is easy to see that $\Upsilon(\Lambda_{\eta,\zeta})=\begin{bmatrix}\eta\\\zeta\end{bmatrix}$.
\section{Deligne-Lusztig characters and Lusztig correspondence} \label{sec3}
Let $G$ be a connected reductive algebraic
group over $\mathbb{F}_q$. In \cite{DL}, P. Deligne and G. Lusztig defined a virtual character $R^{G}_{T,\theta}$ of $G^F$, associated to an $F$-stable maximal torus $T$ of $G$ and a character $\theta$ of $T^F$. We review some standard facts on these characters and Lusztig correspondence (cf. \cite[Chapter 7, 12]{C}, \cite[section 5]{Gec}), which will be used in this paper.

More generally, let $L$ be an $F$-stable Levi subgroup of a parabolic subgroup $P$ which is not necessarily $F$-stable, and $\pi$ be a representation of the group $L^F$. Then $R^G_L(\pi)$ is a virtual character of $G^F$. If $P$ is $F$-stable, then the Deligne-Lusztig induction coincides with the parabolic induction
\[
R^G_L(\pi)= \rm{Ind}^{G^F}_{P^F}(\pi).
\]
For example if $T$ is contained in an $F$-stable Borel subgroup $B$, then
\[
R^G_{T,\theta}=\rm{Ind}^{G^F}_{B^F}\theta.
\]

\subsection{Centralizer of a semisimple element }\label{sec2.2}

 Let $G$ be a classical group over finite field and $F$ a Frobenius morphism of $G$. Let $s$ be a semisimple element in the connected component of $G$. Let $C_{G(\overline{\bb{F}}_q)}(s)$ be the centralizer in $G(\overline{\bb{F}}_q)$ of a semisimple element $s \in G^0(\overline{\bb{F}}_q)$. In \cite[subsection 1.B]{AMR}, A.-M. Aubert, J. Michel and R. Rouquier described $C_{G(\overline{\bb{F}}_q)}(s)$ as follows. Let $T(\overline{\bb{F}}_q) \cong \overline{\bb{F}}^\times_q\times\cdots\times\overline{\bb{F}}_q^\times$ be a $F$-rational maximal torus of $G(\overline{\bb{F}}_q)$, and let $s=(x_1,\cdots,x_l)\in T^F$. Let $\nu_{a}(s):=\#\{i|x_i=a\}$ and let
  \[
  [ a] := \{a^{{q}^k}|k\in\bb{Z}\}.
  \]
  Clearly, if $a'\in[a]$ and $a\in\{x_i\}$, then $a'\in\{x_i\}$ and $\nu_{a'}(s)=\nu_{a}(s)$.
 The group $C_{G(\overline{\bb{F}}_q)}(s)$ has a natural decomposition with the eigenvalues of $s$:
\[
C_{G(\overline{\bb{F}}_q)}(s)=\prod_{[ a] \subset \{x_i\}}G_{[ a]}(s)(\overline{\bb{F}}_q)
\]
where $G_{[ a]}(s)(\overline{\bb{F}}_q)$ is a reductive quasi-simple group of rank equal to $\#[ a]\cdot\nu_{a}(s)$. In particular, for special orthogonal group $G=\so_{2n+1}$ which is the dual group of symplectic group $\sp_{2n}$, we have
\begin{itemize}

\item If $a=1$, then $G_{[ 1]}(s)=\so_{2\nu_{1}(s)+1}$;

\item If $a=-1$ and $G_{[ -1]}(s)=\o^\epsilon_{2\nu_{-1}(s)}$;

\item  If $a\ne \pm 1$, then $G_{[ a]}(s)(\overline{\bb{F}}_q)$ is either $\GGL_{\nu_a(s)}$ or $\UU_{\nu_a(s)}$.

\end{itemize}

For $[a]\nsubseteq \{x_i\}$, we set $G_{[ a]}(s):=1$. Then we rewrite
\begin{equation}\label{decomp}
C_{G^F}(s)=\prod_{[ a] }G^F_{[ a]}(s)
\end{equation}
where the product runs over every $[a]$ with $a\in \overline{\bb{F}}_q$.

\subsection{Lusztig correspondence for symplectic groups}\label{mlus}
In this subsection we shall recall the Lusztig correspondence and Jordan decomposition. The content can be found in \cite{DL,L1}, or in a survey given by Geck \cite[section 5]{Gec}. Let $G$ be a symplectic group defined over $\Fq$, and $G^*$ be the dual group of $G$. We still denote the Frobenius endomorphism of $G^*$ by $F$. Then there is a natural bijection between the set of $G^F$-conjugacy classes of $(T, \theta)$ and the set of $G^{*F}$-conjugacy classes of $(T^*, s)$ where $T^*$ is a $F$-stable maximal torus in $G^*$ and $s \in   T^{*F}$. We will also denote $R_{T,\theta}^G$  by $R_{T^*,s}^G$ if $(T, \theta)$ corresponds to $(T^*, s)$.
For a semisimple element $s \in G^{*F}$, define
\[
\mathcal{E}(G^F,s) = \{ \chi \in \rm{Irr}(G^F)  :  \langle \chi, R_{T^*,s}^G\rangle \ne 0\textrm{ for some }T^*\textrm{ containing }s \}.
\]
The set $\mathcal{E}(G,s)$ is called  the Lusztig series. We can thus define a partition of $\rm{Irr}(G^F)$ by Lusztig series (see e.g. \cite[subsection 2.3]{Ma} and \cite[Theorem 5.2]{Gec})
i.e.
\[
\rm{Irr}(G^F)=\coprod_{(s)}\mathcal{E}(G^F,s).
\]

\begin{proposition}[Lusztig's Jordan decomposition](see e.g. \cite[Theorem 8.14]{Sr2})\label{Lus}
Let $I$ be the identity in the dual group of $C_{G^{*F}}(s)$. There is a bijection
\[
\mathcal{L}_s:\mathcal{E}(G^F,s)\to \mathcal{E}(C_{G^{*F}}(s),I),
\]
extended by linearity to a map between virtual characters satisfying that
\begin{equation}\label{Lus2}
\mathcal{L}_s(\varepsilon_G R^G_{T^*,s})=\varepsilon_{C_{G^{*}}(s)} R^{C_{G^{*F}}(s)}_{T^*,1}.
\end{equation}
Moreover, we have
\[
\rm{dim}(\pi)=\frac{|G^F|_{p'}}{|C_{G^{*F}}(s)|_{p'}}\rm{dim}(\cal{L}_s(\pi))
\]
where $|G|_{p'}$ denotes greatest factor of $|G|$ not divided by $p$ with $\Fq=\bb{F}_{p^m}$, and  $\varepsilon_G:= (-1)^r$ where $r$ is the $\Fq$-rank of $G$.
Moreover, Lusztig correspondence sends cuspidal representation to cuspidal representation (see e.g. \cite[Chap 9]{L1} and \cite[Lemma 2.7]{Ma}).
\end{proposition}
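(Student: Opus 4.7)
The plan is to prove this classical statement by following the original Deligne--Lusztig and Lusztig approach: first construct the correspondence at the level of virtual characters spanned by Deligne--Lusztig characters using orthogonality relations, then upgrade it to a bijection of irreducible series by invoking Lusztig's combinatorial parametrization of each series by families and Fourier-transform data. The dimension formula and preservation of cuspidality will then follow formally from the construction.

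First I would reduce the problem to Deligne--Lusztig characters. The decomposition $\mathrm{Irr}(G^F)=\coprod_{(s)}\mathcal{E}(G^F,s)$ ensures that the $\mathbb{Q}$-span $V_s$ of $\{\varepsilon_G R^G_{T^*,s}\}_{T^*\ni s}$ lies inside $\mathbb{Q}\mathcal{E}(G^F,s)$, and the analogous subspace $V_I$ lies inside $\mathbb{Q}\mathcal{E}(C_{G^{*F}}(s),I)$. The candidate map is the obvious $\mathbb{Q}$-linear assignment $\Phi\colon \varepsilon_G R^G_{T^*,s}\mapsto \varepsilon_{C_{G^*}(s)}R^{C_{G^{*F}}(s)}_{T^*,1}$, which is well-defined because the $F$-stable maximal tori of $G^*$ containing $s$ are exactly the $F$-stable maximal tori of $C_{G^*}(s)$. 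The key computational step is to verify that $\Phi$ is an isometry on these spans, which reduces to the Deligne--Lusztig inner product formula
\[
\langle R^G_{T^*,s},R^G_{T'^*,s}\rangle_{G^F}=\frac{|\{w\in W(T^*,T'^*)^F: w\cdot s=s\}|}{|T^{*F}|},
\]
together with the observation that the $F$-fixed transporter fixing $s$ can be computed inside $C_{G^*}(s)$, so the same formula governs the inner products of the target characters.

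The main obstacle is that $V_s$ does not in general span $\mathbb{Q}\mathcal{E}(G^F,s)$: not every irreducible character in a Lusztig series is uniform, so the isometry $\Phi$ only pins down the uniform parts on both sides. To promote it to a genuine bijection $\mathcal{L}_s$ I would invoke Lusztig's parametrization of $\mathcal{E}(G^F,s)$ by pairs $(\mathcal{F},x)$, where $\mathcal{F}$ is a family in the Weyl group of $C_{G^*}(s)$ and $x\in\mathcal{M}(\mathcal{F})$ is a Fourier-transform datum; the same combinatorial data parametrize $\mathcal{E}(C_{G^{*F}}(s),I)$, and compatibility with $\Phi$ on the uniform part singles out the correct pairing, at which point the identity (\ref{Lus2}) becomes tautological on Deligne--Lusztig characters. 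The dimension formula then falls out of the explicit degree $\deg R^G_{T^*,s}=\varepsilon_G\varepsilon_{T^*}|G^F|_{p'}/|T^{*F}|$ (and its analogue for $C_{G^{*F}}(s)$) applied to any rational expression of $\pi$ as a combination of $R^G_{T^*,s}$, clearing denominators by the ratio $|G^F|_{p'}/|C_{G^{*F}}(s)|_{p'}$. Finally, the preservation of cuspidality is a consequence of the compatibility of $\mathcal{L}_s$ with Harish-Chandra induction in $G^F$ and in $C_{G^{*F}}(s)$ from the corresponding dual Levi subgroup, since a character is cuspidal precisely when it is orthogonal to every such induced character.
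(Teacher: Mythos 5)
The paper does not prove this proposition at all: it is quoted as a known theorem with references to \cite[Theorem 8.14]{Sr2}, \cite{L1} and \cite[Lemma 2.7]{Ma}, so there is no internal argument to compare yours against. Judged on its own, your sketch follows the standard strategy (isometry on the uniform subspace via the Deligne--Lusztig inner product formula, then Lusztig's parametrization by families and Fourier data), but it has three concrete problems. First, the claim that ``compatibility with $\Phi$ on the uniform part singles out the correct pairing'' is false, and the paper itself warns of this immediately after the proposition: $\mathcal{L}_s$ is \emph{not} uniquely determined by (\ref{Lus2}) in general, because distinct irreducibles in a family need not be separated by their multiplicities in the $R^G_{T^*,s}$. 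What actually has to be proved is that \emph{some} bijection compatible with (\ref{Lus2}) exists, and that is exactly Lusztig's Main Theorem computing $\langle \pi_{(\mathcal{F},x)}, R_{T,\theta}\rangle$ via the non-abelian Fourier matrix on both sides; your sentence compresses the entire content of \cite{L5} into the word ``tautological,'' which is where the proof really lives.

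Second, the dimension formula cannot be obtained by ``applying the degree formula to any rational expression of $\pi$ as a combination of $R^G_{T^*,s}$,'' since a non-uniform $\pi$ admits no such expression. The correct route is to note that $\dim\pi=\langle\pi,\mathrm{reg}_{G^F}\rangle$ and that the regular character is uniform, so the degree depends only on the uniform projection of $\pi$, which (\ref{Lus2}) controls. Third, in the setting of this paper $C_{G^{*F}}(s)$ is typically \emph{disconnected} (it contains factors $\o^{\epsilon}_{2\nu_{-1}(s)}$ when $s$ has eigenvalue $-1$, cf.\ Section \ref{sec2.2}), so the connected-group inner product formula and the family parametrization you invoke do not apply verbatim; one needs the extension of the Jordan decomposition to disconnected centralizers (Lusztig's later work, or the treatment in \cite{Sr2}, \cite{Ma}), and likewise the compatibility with Harish-Chandra induction used for cuspidality is a separate nontrivial input rather than a formal consequence of the construction.
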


Note that the correspondence $\cal{L}_s$ is usually not uniquely determined. We say a complex irreducible representation is uniform if its character is a linear combination of the Deligne-Lusztig characters. In this paper, we only
use Lusztig correspondence in the uniform case, which is uniquely determined by (\ref{Lus2}). By (\ref{decomp}), we can rewrite Lusztig correspondence as
\begin{equation}\label{lc}
\begin{matrix}
\mathcal{L}_s:&\mathcal{E}(G^F,s)&\to &\prod_{[ a] }\mathcal{E}(G^{*F}_{[ a]}(s),I)&\\
\\
&\pi&\to&\prod_{[a]}\pi[a].
\end{matrix}
\end{equation}

\section{Classification of irreducible representations for finite classical groups}\label{sec4}

In this section, we first review some results on the classification of the irreducible unipotent representations of classical groups by Lusztig in \cite{L1, L3, L4, LS}. Then we give a parametrization of irreducible representations of symplectic groups.
\subsection{Classification of unipotent representations}

An irreducible representation $\pi$ of $G^F$  is called unipotent if
\[
\pi\in \cal{E}(G^F,I).
\]

The classification of the representations of $G^F=\GGL_n\fq$ and $\UU_n(\Fq)$ was given by Lusztig and Srinivasan in \cite{LS}. Denote by $W_n\cong S_n$ the Weyl group of the diagonal torus $T_0$ in  $\GGL_n\fq$ or $\UU_n(\Fq)$. For any $F$-stable maximal torus $T$, there is $g\in G$ such that $^gT=T_0$. Since $T$ is $F$-stable, we have $gF(g^{-1})\in N_G(T_0)$. If $w$ is the image of  $gF(g^{-1})$ in $W_n$, then we denote $T$ by $T_w$.

\begin{theorem}\label{thm3.3}
Let $G_n:=\GGL_n$ (resp. $\UU_n$). Let $\sigma$ be an irreducible representation of $S_n$. Then
\[
R_\sigma^{G_n}:=\frac{1}{|W_n|}\sum_{w\in W_n}\sigma(ww_0)R_{T_w,1}^{G_n}
\]
is (resp. up to sign) a unipotent representation of $\GGL_n\fq$ (resp. $\UU_n(\Fq)$) and all unipotent representations of $\GGL_n\fq$ (resp. $\UU_n(\Fq)$) arise in this way.\end{theorem}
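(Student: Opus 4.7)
The plan is to establish the theorem via Fourier inversion on the Weyl group $W_n = S_n$, combined with the orthogonality of Deligne-Lusztig characters and the fact that every unipotent character of $\GGL_n\fq$ or $\UU_n\fq$ is uniform.

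First, I would record the parametrization of $F$-stable maximal tori. The $G_n^F$-conjugacy classes of such tori correspond to $F$-conjugacy classes in $W_n=S_n$. For $\GGL_n$ the Frobenius acts trivially on $W_n$, so $F$-conjugacy is ordinary conjugacy. For $\UU_n$ the Frobenius acts on $W_n$ via $w\mapsto w_0 w w_0^{-1}$, and the map $w\leftrightarrow ww_0$ identifies $F$-conjugacy classes on $W_n$ with ordinary conjugacy classes on the coset $W_n w_0$. This identification explains the appearance of the factor $ww_0$ inside $\sigma(\,\cdot\,)$ in the definition of $R_\sigma^{G_n}$.

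Second, I would apply the inner product formula for Deligne-Lusztig characters (as in \cite{C}):
\[
\langle R_{T_w,1}^{G_n},\,R_{T_{w'},1}^{G_n}\rangle_{G_n^F}
\;=\;\tfrac{1}{|W_n|}\,\#\{x\in W_n : x\,w\,F(x^{-1})=w'\}.
\]
Substituting this into the definition of $R_\sigma^{G_n}$ and invoking the second orthogonality relation for $\widehat{S_n}$ (adapted, in the unitary case, via the bijection of the first step), a direct computation gives
\[
\langle R_\sigma^{G_n},\,R_{\sigma'}^{G_n}\rangle \;=\; \delta_{\sigma,\sigma'}.
\]
Hence $R_\sigma^{G_n}$ is a virtual character of norm $1$, so equals $\pm \pi_\sigma$ for a unique irreducible character $\pi_\sigma$; the sign is $+1$ for $\GGL_n$ and is the Lusztig--Srinivasan sign $\varepsilon_{G_n}\varepsilon_{T_w}$ for $\UU_n$.

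Third, since every summand $R_{T_w,1}^{G_n}$ lies in the span of $\cal{E}(G_n^F,1)$ by definition of the unipotent Lusztig series, each $\pi_\sigma$ is unipotent, and injectivity of $\sigma\mapsto \pi_\sigma$ is immediate from the orthogonality above. For surjectivity, I would invoke the key fact that every unipotent character of $\GGL_n\fq$ and of $\UU_n\fq$ is uniform, hence lies in the span of the $R_{T_w,1}^{G_n}$ and, by Fourier inversion on $W_n$, in the span of the $R_\sigma^{G_n}$. Combined with the matching counts $|\widehat{S_n}|=p(n)=|\cal{E}(G_n^F,1)|$, this forces the claimed bijection.

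The main obstacle is completeness, namely the uniformity of the unipotent characters of $\UU_n\fq$. For $\GGL_n\fq$ this is essentially Green's classical theory, but for $\UU_n\fq$ one really needs the Lusztig--Srinivasan work (or an Ennola duality argument). A second delicate point is to pin down the sign $\varepsilon_{G_n}\varepsilon_{T_w}$ in the unitary case, which is exactly what forces $\sigma$ to be evaluated at $ww_0$ rather than at $w$, and which requires careful bookkeeping of the $F$-twist on $W_n$.
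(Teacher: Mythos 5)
The paper does not prove this statement: Theorem \ref{thm3.3} is quoted as background, with the classification attributed to Lusztig--Srinivasan \cite{LS} (and, for $\GGL_n$, to Green's theory). Your outline is the standard proof of that cited result and is structurally sound: parametrize tori by $F$-conjugacy classes of $W_n$ (with the $w\mapsto ww_0$ twist in the unitary case), compute $\langle R_\sigma,R_{\sigma'}\rangle=\delta_{\sigma,\sigma'}$ from the Deligne--Lusztig inner product formula and orthogonality on $S_n$, and deduce surjectivity from uniformity of unipotent characters. Two caveats. First, your inner product formula carries a spurious $\tfrac{1}{|W_n|}$: the correct statement is $\langle R_{T_w,1},R_{T_{w'},1}\rangle=\#\{x\in W_n: xwF(x)^{-1}=w'\}$ (check it against $\langle \mathrm{Ind}_B^G 1,\mathrm{Ind}_B^G 1\rangle=|W_n|$ in the split case); with your normalization $R_\sigma$ would have norm $1/|W_n|$, not $1$. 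Second, the ``key fact'' you invoke for surjectivity --- that every unipotent character of $\UU_n\fq$ is uniform --- is not an independent lemma one can cite separately from the theorem: it is essentially the main content of \cite{LS} itself, so your argument is a reduction to the cited literature rather than a self-contained proof. That is acceptable here, since the paper treats the theorem the same way, but it should be stated as such; also, the sign in the unitary case is a single sign attached to $\sigma$ (chosen to make the degree positive), not $\varepsilon_{G_n}\varepsilon_{T_w}$, which would vary with the summation index $w$.
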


It is well-known that irreducible representations of $S_n$ are parametrized by partitions of $n$. For a partition $\lambda$ of $n$,  denote by $\sigma_\lambda$ the corresponding representation of $S_n$, and
let $\pi_\lambda= R_{\sigma_\lambda}^{\GGL_n}$ (resp. $\pm R_{\sigma_\lambda}^{\UU_n}$) be the corresponding unipotent representation of $\UU_n(\Fq)$.
By Lusztig's result \cite{LS},  $\pi_\lambda$ is (up to sign) a unipotent cuspidal representation  of $\UU_n(\Fq)$ if and only if $n=\frac{k(k+1)}{2}$ for some positive integer $k$ and  $\lambda=[k,k-1,\cdots,1]$.

Now we recall the parametrization of irreducible unipotent representations of symplectic groups and orthogonal groups. Lusztig gives a bijection between the unipotent representations of these groups to equivalence classes of symbols as follow:
\[
\left\{
\begin{aligned}
&\cal{E}(\sp_{2n},I)\\
&\cal{E}(\o_{2n+1},I)\\
&\cal{E}(\o^+_{2n},I)\\
&\cal{E}(\o^-_{2n},I)
\end{aligned}\right.
\longrightarrow
\left\{
\begin{aligned}
&\cal{S}_n:=\big\{\Lambda|\rm{rank}(\Lambda)=n, \rm{def}(\Lambda)=1\ (\textrm{mod }4)\big\};\\
&\cal{S}_n\times\{\pm\};\\
&\cal{S}^+_n:=\big\{\Lambda|\rm{rank}(\Lambda)=n, \rm{def}(\Lambda)=0\ (\textrm{mod }4)\big\};\\
&\cal{S}^-_n:=\big\{\Lambda|\rm{rank}(\Lambda)=n, \rm{def}(\Lambda)=2\ (\textrm{mod }4)\big\};
\end{aligned}\right.
\]
Let $\pi_{\Lambda}$ be the irreducible representation parametrized by $\Lambda$.

\subsection{Classification of irreducible representations of symplectic groups}\label{4.4}
\begin{definition}
For $G^F =\sp_{2n}\fq$ we have $G^{*F} = \so_{2n+1}\fq$. Let $s = (-I, 1)\in \so_{2n+1}\fq$ with $I$ being the identity in $\so_{2n}^\epsilon\fq $. We say that $\pi$ is a $\theta$-epresentation if $\pi\in\cal{E}(G,s)$.

\end{definition}

Let $G$ be a symplectic group. By the Lusztig correspondence, there is a bijection
\[
\mathcal{L}_s:\mathcal{E}(G^F,s)\to \prod_{[ a] }\mathcal{E}(G^{*F}_{[ a]}(s),1)=\mathcal{E}(G^{*F}_{[ \ne\pm1]}(s),1)\times\mathcal{E}(G^{*F}_{[ 1]}(s),1)\times\mathcal{E}(G^{*F}_{[ -1]}(s),1).
\]
where
\begin{itemize}
\item $G^{*F}_{[ \ne\pm1]}(s)=\prod_{[ a]\ne[\pm1] }G^{*F}_{[ a]}(s)$;

\item $G^{*F}_{[ a]}(s)$ is either a general linear group or unitary group if $a\ne\pm1$;

\item $G^*_{[ 1]}(s)=\so_{2\nu_{1}(s)+1}$;

\item $G^*_{[ -1]}(s)=\o^\epsilon_{2\nu_{-1}(s)}$;

\end{itemize}
(c.f. Section \ref{sec2.2} for details).

Based on above bijection and Lusztig's classification of unipotent representations \cite[Chap VIII]{Sr2},\cite{L3}, we obtained in \cite[subsection 4.4]{Wang1} the following classification of irreducible representations:
\[
\mathcal{E}(G^F,s)\longrightarrow\mathcal{E}(G^{*F}_{[ \ne\pm1]}(s),I)\times\cal{S}_{\nu_1(s)}\times\cal{S}^\epsilon_{\nu_{-1}(s)}.
\]

Recall that the Lusztig correspondence is not uniquely determined. The parametrization of irreducible representations depends on the choice of the Lusztig correspondence.
Let
\begin{equation}\label{lus3}
\cal{L}_G: \cal{E}(G)=\coprod_{(s)}\mathcal{E}(G^F,s)\to\coprod_{(s)}
\mathcal{E}(G^{*F}_{[ \ne\pm1]}(s),I)\times\mathcal{E}(G^{*F}_{[ 1]}(s),I)\times\mathcal{E}(G^{*F}_{[ -1]}(s),I)
\end{equation}
such that for $\pi\in\cal{E}(G,s)$, we have
\[
\cal{L}_G(\pi)=
\cal{L}_s(\pi).
\]
We call $\cal{L}_G$ the Lusztig correspondence for $G$. For a fixed $\cal{L}_G$, let $\pi_{\rho,\Lambda,\Lambda'}$ denote the irreducible representation
parametrized by the pair of symbols $(\Lambda,\Lambda')$ and an irreducible unipotent representation $\rho$ of $G^F_{[ \ne\pm1]}(s)$.
From now on, we fix a choice of a series Lusztig correspondences $\{\cal{L}_{\sp_{2n}}\}$ of $\{\sp_{2n}\fq\}$ satisfying the conditions in \cite[Subsection 4.4]{Wang1}. Thus we fix a parametrization for irreducible representations of symplectic groups.

Note that $\pi_{-,\Lambda,-} $ is a unipotent representation of symplectic group and $\pi_{-,\Lambda,-}=\pl$ where we write blank by $-$. On the other hand, $\pi_{-,-,\Lambda'} $ is a $\theta$-representation of symplectic group. We still denote $\pi_{-,-,\Lambda'} $ by $\pi_{\Lambda'}$. In these notations, one should not confuse unipotent representations with $\theta$-representations, because $\pl$ is unipotent (resp. $\theta$) if and only if $\rm{def}(\Lambda)$ is odd (resp. even).

Assume that $\pi\in\cal{E}(G^F,s)$ and $s$ has no eigenvalues $\pm1$, i.e. we assume that $\pi=\pi_{\rho,-,-}$. We set
\begin{equation}\label{a}
\cal{L}_s(\pi_{\rho,-,-})=\rho=\prod_{[a]\ne[\pm1]}\pi{[a]}
\end{equation}
where $\pi[a]$ are unipotent representations of $G^F_{[a]}(s)$. Recall that $G_{[a]}(s)$ is either a general linear group or a unitary group. Then by the classification of unipotent representations of these groups, we can associate each $\pi{[a]}$ with a partition $\lambda{[a]}$.

\section{Nilpotent Orbits and generalized Gelfand-Graev representations}\label{sec5}
Let $G$ be a connected reductive algebraic group defined over $\Fq$ and $\fg$ its Lie algebra, on which we
fix an Ad $G$-invariant non-degenerate bilinear form $\kappa$.

\subsection{$\frak{sl}_2$-triples}
The standard references for the classification of nilpotent orbits can be found in
\cite{CM} or \cite[Chapter 5]{C}.
We will review the basic results on nilpotent orbits and $\frak{sl}_2$-triples in $\fg$.

A $\frak{sl}_2$-triple is a Lie algebra homomorphism $\gamma:\frak{sl}_2\to \fg$.
Let $\Theta$ be the set of all $\frak{sl}_2$-triples and for $\gamma\in \Theta$, we set
\[
X:=\gamma\begin{pmatrix}0&1\\
0&0
\end{pmatrix},\
Y:=\gamma\begin{pmatrix}0& 0\\
1& 0
\end{pmatrix}
,\
H:=\gamma\begin{pmatrix}1& 0\\
0& -1
\end{pmatrix}.
\]
We regard $\Theta$ as a closed subvariety of the vector space $\fg^{\otimes 3}$, via $\gamma\to \{X,Y,H\}$.

For a fixed $\frak{sl}_2$-triple,
let $\fg_i = \{Z \in \fg | \rm{ad}(H)(Z) = iZ\}$ with $i \in \bb{Z}$. Then, from standard $\frak{sl}_2$-theory, we
have the decomposition
\[
\fg = \bigoplus_{j\in\bb{Z}}\fg_{j}.
\]
Let
\[
\fg_{\ge i}:=\bigoplus_{i\ge j}\fg_{j}\textrm{ and } \fg_{\le i}:=\bigoplus_{i\le j}\fg_{j}.
\]
They are Lie algebras of a close connect subgroup $G_{\ge i}$ and $G_{\le i}$ of $G$, respectively.

From the well-known results of Jacobson-Morozov and Kostant \cite[Chapter
3]{CM}, there is a 1-1 correspondence
\[
\begin{matrix}
\left\{\begin{matrix}
\textrm{Ad}(G)\textrm{ conjugacy classes of}\\
 \textrm{ $\frak{sl}_2$-triples in $\fg$}
\end{matrix}
\right\}
&\longleftrightarrow&
\left\{\begin{matrix}
\textrm{Nonzero nilpotent Ad}(G)\textrm{-orbits}\\
\wco\subset\fg
\end{matrix}
\right\}\\
\\
\gamma = \{H, X, Y \}&\longleftrightarrow&\wco=\rm{Ad}(G)\cdot X
\end{matrix}.
\]
If the conjugacy class of an $\frak{sl}_2$-triple $\gamma$ corresponds to a nilpotent orbit $\wco\subset\fg$, then
we say that $\gamma$ is an $\frak{sl}_2$-triple of type $\wco$. We also call these $\rm{Ad}(G)$-orbits $\wco$ the $F$-stable nilpotent orbits in $\fg$.
\subsection{$F$-rational orbits of symplectic groups}
In this subsection, we will classify $F$-rational orbits $\co$ for each $F$-stable nilpotent orbit $\wco$.

From now on, assume that $G$ is a symplectic group and $G^F=\sp_{2n}\fq$ naturally acts on $V$ where $V$ is an $\Fq$-vector space endowed with a symplectic form $(,)$.
Recall that the set of $F$-stable nilpotent orbits of symplectic groups are parameterized by partitions where odd numbers occur with even multiplicity. Hence each this kind of partition $\lambda$
defines a $F$-stable nilpotent orbit $\wco_\lambda$. More explicitly, one can attach a $F$-stable nilpotent orbit $\wco$ to a partition as follow.

Since $G$ (hence $\fg$) is defined over $\Fq$, the subvariety $\Theta$ of $\fg^{\otimes 3}$ is also naturally defined over $\Fq$. And there exists a maximal torus of symplectic group $G$ which is defined  and split over $\Fq$. Then for any $F$-stable nilpotent orbit $\wco$, we have $F(\wco)= \wco$.
Fixed a $F$-stable nilpotent orbit $\wco$, pick a $\frak{sl}_2$-triple $\gamma=\{X,Y,H\}\in \Theta^F$ such that $X\in\wco$.
Set $\fg_{\gamma} = \rm{Span}\{X, H, Y \} \subset \fg^F$. The $\frak{sl}_2$-triple $\gamma$ give rise to a decomposition of $V$:
\[
V=\bigoplus_{i}n_iV_i
\]
where $V_i$ is a irreducible $i$-dimensional $\fg_{\gamma}$-modules and  $n_i$ is the multiplicity of $V_i$ in the decomposition of $V$ by $\fg_{\gamma}$. Then we get the partition corresponding to $\wco$:
\[
\wco=\rm{Ad}(G)\cdot X\longleftrightarrow \gamma = \{H, X, Y \} \longrightarrow (1^{n_1},2^{n_2},\cdots).
\]
We denote the above $F$-stable nilpotent orbit $\wco$ by $\wco_\lambda$.

Methods of \cite[I.6]{Wal} imply the following parameterization of $F$-rational orbit of nilpotent elements in $\fg^F$. It also can be found in \cite[section 3]{GZ}.
\begin{theorem}\label{ra}
Let $G=\sp_{2n}$ and $\cal{P}(n)$ be the set of partitions of 2n where
odd numbers occur with even multiplicity.
There is a 1-1 correspondence between the following sets:
\[
\begin{aligned}
\left\{\begin{matrix}
F\textrm{-rational orbit of nilpotent elements in }\fg^F
\end{matrix}
\right\}
\longleftrightarrow
\left\{\Gamma=(\lambda,(q_i))\left|\begin{aligned}\textrm{ $\lambda=(a_1^{k_1},\cdots,a_r^{k_r})\in\cal{P}(n)$, and}\\
\textrm{for any even $a_i$, set $q_i=Q(k_i)$,}\\
 \textrm{and there is no $q_i$ for odd $a_i$; }
\end{aligned}\right.
\right\}
\end{aligned}
\]
where $Q(k_i)$ is a equivalence class of non-degenerate quadratic form of dimension $k_i$.
\end{theorem}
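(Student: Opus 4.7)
The plan is to exhibit the bijection explicitly via the Jacobson-Morozov decomposition, using the fact that $q$ was assumed large enough for this theorem to hold over $\mathbb{F}_q$. Given a nilpotent $X \in \fg^F$, I complete $X$ to an $\frak{sl}_2$-triple $\gamma = \{X, H, Y\}$ defined over $\mathbb{F}_q$. This endows the defining module $V$ with the structure of an $\frak{sl}_2$-module over $\mathbb{F}_q$, so that $V$ decomposes as $V = \bigoplus_{i \ge 1} V_i \otimes W_i$, where $V_i$ is the (unique up to isomorphism) irreducible $\frak{sl}_2$-module of dimension $i$ and $W_i$ is the multiplicity space of dimension $k_i$. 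The partition $\lambda = (\dots, i^{k_i}, \dots)$ thus attached to $X$ is the one that labels the $F$-stable orbit $\wco_\lambda$ containing $X$.

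Next, I analyze how the symplectic form $(,)$ on $V$ restricts. Because $X, H, Y \in \fg = \sp(V)$, the form is $\frak{sl}_2$-invariant in the sense that it yields an $\frak{sl}_2$-equivariant isomorphism $V \xrightarrow{\sim} V^*$. On each $V_i$, there is a unique $\frak{sl}_2$-invariant bilinear form up to scalar, and it is symmetric if $i$ is odd and antisymmetric if $i$ is even. Hence the form on $V$ restricts to a non-degenerate bilinear form on each $W_i$ which is antisymmetric when $i$ is odd and symmetric when $i$ is even. In particular, when $a_i$ is odd, $k_i = \dim W_i$ must be even, recovering the parity constraint in $\mathcal{P}(n)$, and when $a_i$ is even, $W_i$ carries a non-degenerate quadratic form whose equivalence class $q_i = Q(k_i)$ is an invariant of $X$.

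I then show this assignment $X \mapsto (\lambda, (q_i))$ depends only on the $G^F$-orbit of $X$, by noting that an element of $G^F = \sp_{2n}\fq$ conjugating $X$ to $X'$ transports one $\frak{sl}_2$-triple completion (which is unique up to $C_{G^F}(X)^\circ$-conjugacy by Kostant) to a completion of $X'$, and thereby induces isometric isomorphisms between the corresponding multiplicity spaces. Conversely, given the data $(\lambda, (q_i))$, I construct an explicit model: build $V_i \otimes W_i$ with $W_i$ equipped with a non-degenerate symplectic form for $i$ odd and a quadratic form in the class $q_i$ for $i$ even, equip $V$ with the tensor-product symplectic form, and let $X$ act as the standard regular nilpotent on $V_i$ tensored with the identity on $W_i$.

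The main obstacle is injectivity: showing that two nilpotent elements $X, X'$ with the same invariants $(\lambda, (q_i))$ are actually $G^F$-conjugate (not merely $G$-conjugate). This amounts to producing an isometry $V \to V$ intertwining the two $\frak{sl}_2$-actions, which in turn reduces to producing isometries between the multiplicity spaces $W_i$ and $W_i'$ for each $i$. For odd $i$, any two non-degenerate symplectic forms of the same even dimension are equivalent over $\mathbb{F}_q$, so there is no obstruction; for even $i$, the hypothesis $q_i = q_i'$ provides the needed isometry of quadratic spaces. One then glues these pieces together via Witt extension for the symplectic form on $V$, taking care that the constructed isometry commutes with the $\frak{sl}_2$-triple. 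This is precisely the argument in Waldspurger \cite{Wal}, I.6, which I follow; the characteristic-$p$ issues are harmless under our assumption that $q$ is large.
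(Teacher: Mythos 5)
Your argument is correct and is exactly the route the paper takes: the paper offers no proof of Theorem \ref{ra} beyond citing \cite[I.6]{Wal} and \cite[section 3]{GZ}, and your Jacobson--Morozov decomposition into isotypic pieces $V_i\otimes W_i$, with the induced symplectic/quadratic forms on the multiplicity spaces $W_i$ as the classifying invariants and Witt's theorem supplying rational conjugacy, is precisely the argument of those references. No discrepancy to report.
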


 We denote the $F$-rational nilpotent orbit corresponding to $\Gamma$ by $\co_\Gamma$. If $\Gamma=(\lambda,(q_i))$, then $\co_\Gamma\subset \wco_{\lambda}$ and we say the orbit $\co_\Gamma$ is corresponding to partition $\lambda$. It is well known that for a fixed vector space $V$ with $\rm{dim}(V)$ even over finite fields $\Fq$ with odd $q$, there are precisely two equivalence classes of non-degenerate quadratic forms $Q_1(k)$ and $Q_2(k)$:
 \[
 Q_1(k)=a\cdot Q_2(k)\textrm{ with }a\in \Fq^\times/(\Fq^\times)^2.
 \]
If $\rm{dim}(V)$ is odd, there is only one equivalence class of non-degenerate quadratic form.
\begin{example}
Let $G=\sp_2$. There is exactly one $F$-rational nilpotent orbit corresponding to partition $(1,1)$: $\left\{\begin{pmatrix}
0&0\\
0&0
\end{pmatrix}\right\}$. There are two $F$-rational nilpotent orbits corresponding to partition $(2)$: $\left\{F\textrm{-rational orbit of }\begin{pmatrix}
0&1\\
0&0
\end{pmatrix}\right\}$ and $\left\{F\textrm{-rational orbit of }\begin{pmatrix}
0&a\\
0&0
\end{pmatrix}\right\}$ with $a\in \Fq^\times/(\Fq^\times)^2$.
\end{example}

 Let $V$ be a vector space with dimension $k$. We classify equivalence classes of non-degenerate quadratic form $Q(k)$ as follow. Let $\rm{det}\ Q(k)$ be the determinant of $Q(k)$ of $V$. We normalize the discriminant by
\[
\rm{disc} \ Q(k)=(-1)^{k(k-1)/2}\det Q(k) \in \Fq^\times/ (\Fq^\times)^2,
\]
such that when $\dim V$ is even, $\rm{disc} \ Q(k)=+1$ if and only if the special orthogonal group $\rm{SO}(V)$ which preserves $Q(k)$ is split. Pick up an anisotropic vector $v_0\in V$, and let $W$ be the orthogonal complement of $v_0$. Then we have
\begin{equation}\label{sgn}
\rm{disc} \ Q(k)= (-1)^{n-1}\cdot (Q(k))(v_0) \cdot \rm{disc} \ (Q(k)|_W)
\end{equation}
where both sides are regarded as square classes in $\Fq^\times/ (\Fq^\times)^2$.
This setting is consistent with \cite[(1.3)]{Wang1}. To simplify notations, we write $q_i=+$ (resp. $q_i=-$) if $\rm{disc} \ q_i=\rm{disc} \ Q_i(k)=+1$ (resp. $\rm{disc} \ q_i=0$).

\subsection{Generalized
Gelfand-Graev representations}\label{sec5.3}
We fix an $\rm{Ad}(G)$-invariant non-degenerate bilinear form $\kappa$ on $\fg$. Let $\psi$ be a non-trivial character of $\Fq$.

Pick a $\frak{sl}_2$-triple $\gamma=\{X,Y,H\}\in \Theta^F$. Then $G_{\ge i}$ and $\fg_{\ge i}$ are defined over $\Fq$.
 We regard $\kappa$ as a linear form of $\fg_{\ge 1}$ defined by
\[
\kappa(x)  =  \kappa(Y, x).
\]
By setting $\langle x,  y\rangle  =  \kappa ([x, y])$, we defines a non-singular symplectic form on $\fg_1$.
We choose a Lagrangian subspace $L$, for $\langle  , \rangle$ and we denote $\fg_{\ge1.5}:=L\oplus\bigoplus_{\ge2}\fg_i$.  Then $\fg_{\ge1.5}$ is the Lie algebra of a closed, connected unipotent subgroup $G_{\ge1.5}$ of $G_{\ge1}$. The restriction of $\kappa$ to $\fg_{\ge1.5}$ vanishes on all commutators of this Lie algebra; hence, the composition
\[
G_{\ge1.5}^F\xrightarrow{\rm{log}}\fg_{\ge1.5}^F\xrightarrow{\kappa}\Fq\xrightarrow{\psi}\bb{C}^\times
\]
is a homomorphism of algebraic groups. By abuse of notations, we still denote above character of $G_{\ge1.5}^F$ by $\psi$.

We now associate to $\gamma$ a generalized Gelfand-Graev representation
\[
\Gamma_\gamma:=\rm{Ind}^{G^F}_{G_{\ge 1.5}^F}\psi
\]
and
\[
q^{\rm{dim}\frac{\fg_1}{2}}\cdot\Gamma_\gamma=\rm{Ind}^{G^F}_{G_{\ge 2}^F}\psi.
\]

Note that $H_\gamma:=G_{\ge 1}^F/[G_{\ge 2}^F,G_{\ge 2}^F]\cong L^F\oplus L^{\vee F} \oplus Z^F$ is a Heisenberg group where $Z = G_{\ge 2}^F/[G_{\ge 2}^F,G_{\ge 2}^F]$ and $L$ Lagrangian subspace of $\fg_1$ as before.
 Then, according to the Stone-von Neumann
theorem, there exists a unique, up to equivalence, irreducible representation $\rho_{\psi}$ of $H_\gamma$ such that the center $Z^F$ of $H_\gamma$ acts by the character $\psi$. By composition with the natural homomorphism from $G_{\ge 1}^F$ to $H_\gamma$, we regard $\rho_{\psi}$ as a representation of $G_{\ge 1}^F$, and according to the uniqueness of $\rho_{\psi}$, we have
\[
\rho_{\psi}=\rm{Ind}^{G_{\ge 1}^F}_{G_{\ge 1.5}^F}\psi.
\]
Since
\[
M^F_X = G^F_\gamma := \{g \in G^F |\rm{Ad}(g)X = X, \rm{Ad}(g)H = H, \rm{Ad}(g)Y = Y \}.
\]
 preserves $\gamma$, it is well-known that there exists a representation of a semi-direct product
$M_X^F\rtimes G_{\ge 1}^F$ which extends the representation $\rho_{\psi }$ of $G_{\ge 1}^F$. We refer to the representation $\omega_{\psi }$ of $M^F_X\rtimes G^F_{\ge 1}$ as the Weil representation associated to $\psi$. Then for a representation $\pi$ of $G^F$, the Hom space
\[
\rm{Hom}_{G_{\ge 1}^F}(\pi,\omega_{\psi })
\]
is naturally a representation of $M_X^F$.
By Frobenius reciprocity,
\[
\rm{Hom}_{G_{\ge 1}^F}(\pi,\omega_{\psi })\cong\rm{Hom}_{G^F}(\pi,\rm{Ind}^{G^F}_{G_{\ge 1.5}^F}\psi)\cong \rm{Hom}_{G_{\ge 1.5}^F}(\pi,\psi).
\]
If the $F$-rational orbit of $X$ is corresponding to $\Gamma$ as in Theorem \ref{ra}, then we denote this representation of $M_X^F$ by $\cal{F}(\pi,\co_\Gamma,\psi )$. We say $\pi$ has a nontrivial Fourier coefficient on $\co_\Gamma$ (resp. $\wco_\lambda$) or $\co_\Gamma$ (resp. $\wco_\lambda$) supports $\pi$ if  $\cal{F}(\pi,\co_\Gamma,\psi )\ne0$ and $\co_\Gamma\subset\wco_\lambda$.

If $\Gamma=((2\ell,1^{2(n-\ell)}),(\epsilon))$, then $M_X\cong\sp_{2(n-\ell)}$ and $\cal{F}(\pi,\co_\Gamma,\psi )$ is a representation of $\sp_{2(n-\ell)}\fq$. Let $G'=\sp_{2(n-\ell)}$ and $\co_{\Gamma'}$ be an $F$-rational nilpotent orbit of $G'$. We say $\co_\Gamma\circ\co_{\Gamma'}$ supports if
\[
\cal{F}(\cal{F}(\pi,\co_\Gamma,\psi ),\co_{\Gamma'},\psi)\ne 0.
\]

\subsection{Gan-Gross-Prasad problem and descents for finite symplectic groups}\label{sec5.4}

The Gan-Gross-Prasad problem \cite{GP1,GP2,GGP1,GGP2} concerns a restriction or branching problem in the representation theory of real or p-adic Lie groups. It also has a global counterpart which is concerned with a family of period integrals of automorphic forms. On the other hand, D.
Jiang and L. Zhang \cite{JZ1} study the local descents for p-adic orthogonal groups, whose results can
be viewed as a refinement of the local Gan-Gross-Prasad conjecture, and the descent method has
important applications towards the global problem (see \cite{JZ2}).
In previous works \cite{LW2,LW3}, we have studied the Gan-Gross-Prasad problem and descent method for finite classical groups.

Roughly speaking, the Gan-Gross-Prasad problem concerns the Fourier coefficient of an irreducible representation $\pi\in\rm{Irr}(G^F)$ on an $F$-rational nilpotent orbits $\co$ in an $F$-stable nilpotent orbit $\wco_\lambda$ with $\lambda=(2\ell,1,1,\cdots,1)$.

Let $P_\ell=M_\ell N_\ell$ be the parabolic subgroup of $\rm{Sp}_{2n}$ with Levi factor $M_\ell\cong \GGL_1^{\ell}\times \sp_{2(n-\ell)}$ and its unipotent radical $N_\ell$ can be written in the form
\[
N_{\ell}=\left\{n=
\begin{pmatrix}
z & y & x\\
0 & I_{n-2\ell} & y'\\
0 & 0 & z^*
\end{pmatrix}
: z\in U_{\GGL_\ell}
\right\},
\]
where the superscript ${}^*$ denotes the  transpose inverse, and $U_{\GGL_\ell}$ is the subgroup of unipotent upper triangular matrices of $\GGL_\ell$.

Consider the $F$-rational nilpotent orbits $\co$ corresponding to $\Gamma=(\lambda,(q_1))=((2\ell,1,1,\cdots,1),(+))$. In this setting, $G_{\ge 1}= N_\ell$ and $M_X=\sp_{2(n-\ell)}$. Let
\[
H:=\sp_{2(n-\ell)}\ltimes N_{\ell}=M_X\ltimes G_{\ge 1}
\]
and
 \[
m_{\psi}(\pi,\pi'):=\rm{dim}\rm{Hom}_{H(\Fq)}(\pi, \pi'\otimes \omega_{\psi})
 \]
with $\pi\in\rm{Irr}(\sp_{2n}\fq)$ and $\pi'\in\rm{Irr}(\sp_{2(n-\ell)}\fq)$.

Note that $m_{\psi}(\pi, \pi')$ depends on ${\psi}$, and that
\[
\rm{Hom}_{H(\Fq)}(\pi\otimes \overline{\omega_{\psi}}, \pi')\cong \rm{Hom}_{\rm{Sp}_{2n-2\ell}(\Fq)}(\CJ'_\ell(\pi\otimes\overline{\omega_{\psi}}), \pi')\cong \rm{Hom}_{M_X^F}(\cal{F}(\pi,\co,\psi), \pi'),
\]
where $\CJ'_\ell(\pi\otimes\overline{\omega_{\psi}})$ is the twisted Jacquet module of $\pi\otimes\overline{\omega_{\psi}}$ with respect to $(N_{\ell}(\Fq), \psi)=(G_{\ge 1}^F,\psi)$. Define the $\ell$-th {\sl Fourier-Jacobi quotient} of $\pi$ with respect to $\psi$ to be
\begin{equation}\label{lfd}
\CQ_{\ell,\psi}^\rm{FJ}(\pi):=\CJ'_\ell(\pi\otimes\overline{\omega_{\psi}})=\cal{F}(\pi,\co_{((2\ell,1^{2(n-\ell)}),(+))},\psi),
\end{equation}
viewed as a representation of $\sp_{2(n-\ell)}(\Fq)$. Define the {\sl first occurrence index} $\ell_0:=\ell_{0,\psi}^\rm{FJ}(\pi)$ of $\pi$ to be the largest nonnegative integer $\ell_0\leq n$ such that $\CQ^\rm{FJ}_{\ell_0,\psi}(\pi)\neq 0$ for some choice of $\psi$. The $\ell_0$-th Fourier-Jacobi quotient of $\pi$ with respect to this chosen $\psi$ is called the {\sl first Fourier-Jacobi descent} of $\pi$ or simply the {\sl descent} of $\pi$, denoted by
\begin{equation}\label{bd}
\CD^\rm{FJ}_{\ell_0, \psi}(\pi) :=\CQ^\rm{FJ}_{\ell_0,\psi}(\pi) \ (\textrm{or simply }\CD^\rm{FJ}_{\psi}(\pi)).
\end{equation}

Let $\psi'$ be another nontrivial additive character of $\Fq$ not in the square class of $\psi$. Note that
\begin{equation}\label{5.2}
\co_{((2\ell,1^{2(n-\ell)}),(+))}=a\cdot\co_{((2\ell,1^{2(n-\ell)}),(-))}
\end{equation}
 with $a\in \Fq^\times/(\Fq^\times)^2$. Wa have
\begin{equation}\label{5.3}
\CD^\rm{FJ}_{\ell_0, \psi'}(\pi)=\cal{F}(\pi,\co_{((2\ell_0,1^{2(n-\ell)}),(-))},\psi)
\end{equation}
Hence, to calculate the wavefront set of $\pi$, one need consider both of $\CD^\rm{FJ}_{\ell_0, \psi}(\pi)$ and $\CD^\rm{FJ}_{\ell_0, \psi'}(\pi)$.

We next turn to consider ``the descent of the descent of $\pi$''. However, the descent $\CD^\rm{FJ}_{\ell_0, \psi}(\pi)$ may not be irreducible. So we shall consider the descent of a irreducible component of $\CD^\rm{FJ}_{\ell_0, \psi}(\pi)$. We call a series of irreducible representations $\{\pi_i\}$ a descent sequence of $\pi$ if
\[
\pi_1\xrightarrow{\ell_1}\pi_2\xrightarrow{\ell_2}\pi_{3}\xrightarrow{\ell_3}\cdots\xrightarrow{\ell_k}\bf{1}
\]
where $\pi_1=\pi$ and $\ell_i=\ell_{0,\psi_{i}}^\rm{FJ}(\pi_{i})$, and $\pi_i$ appears in $\CD^\rm{FJ}_{\ell_{i-1}, \psi_{i-1}}(\pi_{i-1})$ for some choice of $\psi_{i-1}$, and the last $\bf{1}$ is the trivial representation of trivial group. We call the array $(2\ell_1,2\ell_2,\cdots)$ the descent sequence index of $\pi$ with respect to $\{\pi_i\}$. Note that $(2\ell_1,2\ell_2,\cdots)$ may {\sl not} be a partition. For two descent sequence indexes $(2\ell_1,2\ell_2,\cdots)$ and $(2\ell_1',2\ell_2',\cdots)$ of $\pi$, we say $(2\ell_i)\succeq(2\ell_i')$ if for some $j$ we have
\[
\ell_i=\ell'_i\ \textrm{ with }i<j\textrm{ and }\ell_j>\ell'_j.
\]
It easy to check that $\succeq$ is a order of descent sequence indexes, and let $\hat{\ell}{(\pi)}$ or simply $\hat{\ell}{}$ be the largest descent sequence index, and we say these kinds of sequence are the largest descent sequences of $\pi$. For a largest descent sequences of $\pi$, we say the sequence is irreducible if $\CD^\rm{FJ}_{\ell_{i-1}, \psi_{i-1}}(\pi_{i-1})=\pi_i\in\rm{Irr}(\sp_{2n_i}\fq)$ for every $i$, i.e. each step in this largest descent sequence is irreducible.
\section{Calculation of descent sequence index}\label{sec6}
The aim of this section is to recall the descents result for irreducible representations of finite symplectic groups in \cite{Wang1,Wang2} and calculate their (largest) descent sequence. Let $\pi\in\cal{E}(\sp_{2n}\fq,s)$. In this paper, we consider the following three cases:

\begin{itemize}

\item (A) $s$ has no eigenvalues $\pm1$;

\item (B) $s$ only has eigenvalues $1$ or only has eigenvalues $-1$;

\item (C) $\pi$ is cuspidal.
\end{itemize}
We can not get the wavefront sets of the general case directly by composing the first two cases. For example, Let $P$ be a parabolic subgroup of $\sp_4$ with Levi factor $\GGL_1\times\sp_2$, and let $\pi=\rm{Ind}^{\sp_4\fq}_{P^F}(\sigma\otimes{\bf1})$ with $\sigma\ne {\bf1}$. Then $\pi$ is irreducible and the partition corresponding to the wavefront set of $\pi$ is $(2,2)$. But the partitions corresponding to the wavefront sets of ${\bf1}$ and $\rm{Ind}^{\sp_2\fq}_{P^{\prime F}}(\sigma)$ is $(1,1)$ and $(2)$, respectively.

\subsection{Case (A)}\label{sec6.1}

\begin{theorem}\label{a1}
Let $\pi$ be an irreducible representation of $\sp_{2n}\fq$ in Case (A) and $\pi'$ be an irreducible representation of $\sp_{2k}\fq$ with $n\ge k$.

(i) If $\pi'$ is in Case (A), then
\[
m_\psi(\pi,\pi')  =\prod_{[a]}
 m(\pi[a],\pi'[-a])
 \]
 where $\pi[a]$ and $\pi'[-a]$ are defined in
  (\ref{a}) and they are both unipotent representations of finite general linear groups or finite unitary groups, and $m(\pi[a],\pi'[-a])$ is defined in \cite[(1.2)]{Wang2} for unitary groups and in \cite[p.4]{Wang2} for general linear groups.

  (ii) Assume that $m_\psi(\pi,\pi')\ne 0$. Let $\sigma'$ be the unique irreducible representation  of $\sp_{2k'}\fq$ in Case (A) such that
  $
  \pi'[-a]=\sigma'[-a]
 $ for any $a\ne \pm 1$. Then
  \[
m_\psi(\pi,\pi')=m_\psi(\pi,\sigma')=\prod_{[a]\ne \pm 1}
 m(\pi[a],\pi'[-a]).
 \]
 In particular, each irreducible component of the descent of $\pi$ is an irreducible representation in Case (A).
\end{theorem}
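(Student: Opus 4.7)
The approach is to translate the bilinear form $m_\psi(\pi,\pi')$ into a product of bilinear forms on the Lusztig components of $\pi$ and $\pi'$. Fix semisimple elements $s\in \so_{2n+1}\fq$ and $s'\in\so_{2k+1}\fq$ such that $\pi\in \cal{E}(\sp_{2n}\fq,s)$ and $\pi'\in \cal{E}(\sp_{2k}\fq,s')$. Since $\pi$ is in Case (A), the decomposition (\ref{decomp}) together with the Lusztig correspondence of Proposition \ref{Lus} expresses $\cal{L}_s(\pi)=\prod_{[a]}\pi[a]$ with each $\pi[a]$ a unipotent representation of either $\GGL_{\nu_a(s)}$ or $\UU_{\nu_a(s)}$, and similarly $\cal{L}_{s'}(\pi')=\prod_{[a]}\pi'[a]$.

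The key technical step is to prove a factorization identity of the form
\[
m_\psi(\pi,\pi') \;=\; \prod_{[a]} m\!\left(\pi[a],\pi'[-a]\right),
\]
where the $a\mapsto -a$ twist on the second factor comes from the fact that tensoring by the Weil representation $\omega_\psi$ shifts the dual semisimple parameter from $s'$ to $-s'$, so eigenvalue packets $[a]$ of $s$ must pair against packets $[-a]$ of $s'\cdot\omega_\psi$. I would establish this factorization by expressing $m_\psi(\pi,\pi')$ as an inner product of characters, replacing $\pi$ and $\pi'$ by uniform projections coming from Deligne--Lusztig inductions $R^G_{T^*,s}$ and $R^{G'}_{T^{\prime *},s'}$, and using the character identity (\ref{Lus2}) to decouple the resulting sum over the centralizer product $\prod_{[a]} G^{*F}_{[a]}(s)\times \prod_{[a]} G^{\prime *F}_{[a]}(s')$. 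This combines with the explicit Gan--Gross--Prasad/theta-correspondence computations for $\GGL_n\fq$ and $\UU_n\fq$ from \cite{LW3,Wang1,Wang2} to identify the individual factors as the multiplicities $m(\pi[a],\pi'[-a])$. This yields part (i).

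For part (ii), the crucial observation following from the formula in (i) is that the product is supported only on classes $[a]$ with $a\neq \pm 1$, since $s$ has no $\pm 1$ eigenvalue and thus $\pi[\pm 1]$ is trivial, forcing the corresponding factor to collapse. Consequently, if $m_\psi(\pi,\pi')\neq 0$, the $[1]$ and $[-1]$ components of $\cal{L}_{s'}(\pi')$ (which are unipotent representations of $\so_{2\nu_{1}(s')+1}$ and $\o^\epsilon_{2\nu_{-1}(s')}$ respectively) do not enter the pairing. I would then construct $\sigma'\in \cal{E}(\sp_{2k'}\fq,s'')$ with $s''$ obtained from $s'$ by deleting the $\pm 1$ eigenvalues and defining $\sigma'[a]=\pi'[a]$ for $a\neq \pm 1$; uniqueness of $\sigma'$ follows from the parametrization in Section \ref{4.4}. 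The equality $m_\psi(\pi,\pi')=m_\psi(\pi,\sigma')$ is then immediate from the factorization formula, and applying this to an irreducible component $\pi'$ of the descent $\cal{D}_{\ell_0,\psi}^{\rm{FJ}}(\pi)$ shows the descent lies in Case (A).

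The main obstacle will be the factorization identity for $m_\psi$ through the Lusztig correspondence. This is more subtle than the analogous identities for Bessel models in \cite{Wang2}, because the Fourier--Jacobi pairing involves the Weil representation, which interacts nontrivially with the Deligne--Lusztig induction. I expect to need a see-saw type computation using the compatibility between the Weil representation and Lusztig series (the $s\mapsto -s$ twist), together with a uniformity argument showing that only the uniform projections of $\pi$ and $\pi'$ contribute to $m_\psi$ in Case (A). Once this is settled, both (i) and the reduction in (ii) follow by direct bookkeeping.
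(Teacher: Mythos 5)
Your proposal names the right ingredients (uniformity of Case~(A) representations, a see--saw, the $s\mapsto -s$ twist via the Weil representation, the $\GGL/\UU$ branching computations), but the route you actually sketch for the factorization in (i) has a genuine gap. You propose to write $m_\psi(\pi,\pi')$ as an inner product of characters, replace $\pi,\pi'$ by linear combinations of $R^G_{T^*,s}$ and $R^{G'}_{T'^*,s'}$, and then apply the identity~(\ref{Lus2}) to ``decouple'' the sum over $\prod_{[a]}G^{*F}_{[a]}(s)\times\prod_{[a]}G^{\prime*F}_{[a]}(s')$. But $m_\psi(\pi,\pi')=\dim\mathrm{Hom}_{H^F}(\pi,\pi'\otimes\omega_\psi)$ carries the Weil representation $\omega_\psi$ as an extra tensor factor, and $\omega_\psi$ is neither uniform nor supported on a single Lusztig series; so the pairing does not literally sit inside the uniform span, and the centralizer-product decoupling you appeal to is not available as stated. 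That is exactly the obstruction you flag in your last paragraph, and it is the whole of the difficulty.

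The paper resolves this by an explicit see--saw / theta-correspondence argument, not by direct decoupling. Following \cite[Prop.~7.3, Prop.~7.8]{Wang1}, $m_\psi(\pi,\pi')$ is rewritten as $\langle\pi\otimes\omega_{n,\psi}^{\ee},I^{\sp_{2n}}_P(\tau\otimes\pi')\rangle_{\sp_{2n}\fq}$; the see--saw
$\sp_{2n}\times\sp_{2n}\ /\ \sp_{2n}$ against $\o^{\ee}_{2n+1}\ /\ \o^+_{2n}\times\o^{\ee}_1$, together with the theta lifts $\sigma=\Theta^+_{n,n,\psi}(\pi)$ and $\sigma'=\Theta^{\ee}_{k,k,\psi}(\pi')$, converts this to a restriction problem $\langle\sigma_{\so},I^{\so^{\ee}_{2n+1}}_{P''}(\tau\otimes\sigma'_{\so})\rangle$ on the orthogonal side. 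The factorization over $\prod_{[a]\ne\pm1}$ is then quoted from the Bessel-model results \cite[Prop.~5.2]{LW3} and \cite[Thm.~1.3]{Wang2}, and finally Pan's comparison of theta and Lusztig correspondences \cite{P2} identifies $\sigma_{\so}[a]=\pi[a]$ and $\sigma'_{\so}[a]=\pi'[-a]$ (this is where the $a\mapsto-a$ twist you correctly anticipate is realized). Your proposal reaches the doorway of this argument but leaves the central transfer unconstructed.

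For part~(ii), your claim that ``$\pi[\pm1]$ is trivial, forcing the corresponding factor to collapse'' is too quick: the $[1]$ and $[-1]$ factors pair the trivial representation of a trivial group against $\pi'[\mp 1]$, and it is not a priori true that those multiplicities are $1$ for arbitrary $\pi'[\pm 1]$. The paper instead derives (ii) directly from (i) together with \cite[Theorem~1.4~(i)]{Wang1}, which is where the needed collapse of the $[\pm1]$ contributions is actually justified.
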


\begin{proof}
We immediately get (ii) from (i) of this theorem and \cite[Theorem 1.4 (i)]{Wang1}.

 We prove (i) by theta correspondence and see-saw dual pairs, which are used in the proof of the Gan-Gross-Prasad conjecture (see \cite{Ato, GI, LW2,LW3,Wang1}). Since this theta argument is not used in other parts of this paper, we do not explain some notations, such as see-saw identity, the sign $\ee$, theta lifting $\Theta^{+}_{n,n',\psi}(\pi)$, and weil representation $\omega_{n,\psi}^{\ee}$, and readers can find them in \cite[section 5]{Wang1}. Here we follow the notations and definitions of \cite{Wang1}. The proof of this theorem is based on the proof of \cite[Proposition 7.8]{Wang1}.

By \cite[Proposition 7.3]{Wang1}, we have
\[
m_\psi(\pi,\pi')=\langle  \pi\otimes \omega_{n,\psi}^{\ee}, I_{P}^{\sp_{2n}}(\tau\otimes\pi)\rangle _{\sp_{2n}(\Fq)},
\]
where $P$ is an $F$-stable parabolic subgroup of $\sp_{2n}$ with Levi factor $\GGL_{n-k}\times\sp_{2k}$, $I_{P}^{\sp_{2n}}$ is the parabolic induction, and $\tau$ is a generic representation of $\GGL_{n-k}\fq$.
In the proof of \cite[Proposition 7.8]{Wang1}, we consider the see-saw diagram
\[
\setlength{\unitlength}{0.8cm}
\begin{picture}(20,5)
\thicklines
\put(6.5,4){$\sp_{2n}\times \sp_{2n}$}
\put(7.3,1){$\sp_{2n}$}
\put(12.3,4){$\o^{\ee}_{2n+1}$}
\put(11.6,1){$\o^{+}_{2n}\times \o^{\ee}_1$}
\put(7.7,1.5){\line(0,1){2.1}}
\put(12.8,1.5){\line(0,1){2.1}}
\put(8,1.5){\line(2,1){4.2}}
\put(8,3.7){\line(2,-1){4.2}}
\end{picture}
\]
where $\o^{\ee}_{2n+1}$, $\o^{+}_{2n}$, and $\o^{\ee}_1$ are some orthogonal groups defined in \cite[subsection 1.1]{Wang1}, and $\o^{+}_{2n}$ is a subgroup of $\o^{\ee}_{2n+1}$.
Let
$
\sigma:=\Theta^{+}_{n,n,\psi}(\pi)
$
be the image of theta correspondence of $\pi$ from $\sp_{2n}\fq$ to $\o^{+}_{2n}\fq$ with respect to character $\psi$, and
$
\sigma':=\Theta^{\ee}_{k,k,\psi}(\pi')
$
be the image of theta correspondence of $\pi'$ from $\sp_{2k}\fq$ to $\o^{\ee}_{2k+1}\fq$ with respect to character $\psi$.
As in the proof of \cite[Proposition 7.8]{Wang1}, we know that
\begin{itemize}
\item $\sigma$ and $\sigma'$ are irreducible representations;
\item the image of theta correspondence $\Theta^{+}_{n,n,\psi}(\sigma)$ of $\sigma$ from $\o^{+}_{2n}\fq$ to $\sp_{2n}\fq$ is $\pi$;
\item $\sigma(-I|_{\o^{+}_{2n}\fq})=I^{\o^{\ee}_{2n+1}}_{P'}(\tau\otimes\sigma')(-I)$, where $I$ is the identity element of $\o^{\ee}_{2n+1}\fq$ and $P'$ is an $F$-stable maximal parabolic subgroup of $\o^{\ee}_{2n+1}$ corresponding to $P$.
 \end{itemize}
Then by see-saw identity we have
\[
\langle  \pi\otimes \omega_{n,\psi}^{\ee}, I_{P}^{\sp_{2n}}(\tau\otimes\pi')\rangle _{\sp_{2n}(\Fq)}=\langle  \sigma,  I^{\o^{\ee}_{2n+1}}_{P'}(\tau\otimes\sigma')\rangle_{\o^+_{2n}(\Fq)}\quad \textrm{((7.5) in \cite{Wang1})}.
\]
It easy to check that $\sigma_{\so}:=\sigma|_{\so^+_{2n}(\Fq)}$ and $\sigma'_{\so}:=\sigma'|_{\so^{\ee}_{2k+1}\fq}$ are irreducible, and
\[
I^{\o^{\ee}_{2n+1}}_{P'}(\tau\otimes\sigma')|_{\so^{\ee}_{2n+1}\fq}=I^{\so^{\ee}_{2n+1}}_{P''}(\tau\otimes\sigma'_{\so}),\]
where $P''$ is an $F$-stable parabolic subgroup  of $\so^{\ee}_{2n+1}$.
 Recall that $\sigma(-I|_{\o^{+}_{2n}\fq})=I^{\o^{\ee}_{2n+1}}_{P'}(\tau\otimes\sigma')(-I)$, we get
\[
\langle  \sigma,  I^{\o^{\ee}_{2n+1}}_{P'}(\tau\otimes\sigma')\rangle_{\o^+_{2n}(\Fq)}=\langle  \sigma_{\so},  I^{\so^{\ee}_{2n+1}}_{P''}(\tau\otimes\sigma'_{\so})\rangle_{\so^+_{2n}(\Fq)}.
\]
It follows from \cite[Proposition 5.2]{LW3} and \cite[Theorem 1.3]{Wang2} that
\[
\langle  \sigma_{\so},  I^{\so^{\ee}_{2n+1}}_{P''}(\tau\otimes\sigma'_{\so})\rangle_{\so^+_{2n}(\Fq)}=m(\sigma_{\so},\sigma'_{\so})=\prod_{[a]\ne \pm 1}m(\sigma_{\so}[a],\sigma'_{\so}[a]),
\]
where $\prod_{[a]}\sigma_{\so}[a]$ and $\prod_{[a]}\sigma'_{\so}[a]$ are the image of $\sigma_{\so}$ and $\sigma'_{\so}$ via Lusztig correspondence, respectively (similar to (\ref{lc})). Note that $\sigma_{\so}[\pm1]$ and $\sigma'_{\so}[\pm1]$ are trivial representation of trivial group.

The relationship between Lusztig correspondence and theta correspondence has been established in \cite{P2}. By the main theorems in \cite[subsection 1.4]{P2}, we know that
\[
\pi[a]=\sigma_{\so}[a]
\]
and
\[
\pi'[-a]=\sigma'_{\so}[a],
\]
which completes the proof.
\end{proof}

   We emphasize that in the general linear group case, the multiplicity $ m(\pi[a],\pi'[-a])$ is {\emph not} the multiplicity in Gan-Gross-Prasad problem. As a consequence of Theorem \ref{a1}, in Case (A), the descent of an irreducible representation $\pi$ does not depend on the choice of $\psi$. To simplify notations, we write $\cal{D}^{\rm{FJ}}$ instead of $\cal{D}^{\rm{FJ}}_\psi$.

In our pervious work, we have calculated the descent of unipotent representations of unitary group in \cite{LW2}:

\begin{theorem}
Let $\lambda$ be a partition of $n$ into $k$ rows, and $\lambda^{-1}$ be the partition of $n-k$ obtained by removing the first column of $\lambda$. Let $\pi\in\rm{Irr}(\UU_{m}\fq)$. Then
\[
m(\pi_\lambda,\pi')=0
\]
if $m< n-k$. Moreover, for $m=n-k$,
\[
m(\pi_\lambda,\pi')=
\left\{
  \begin{aligned}
 1&\textrm{ if }\pi'=\pi_{\lambda^{-1}};\\
  0&\textrm{ otherwise}.
\end{aligned}
\right.
\]
\end{theorem}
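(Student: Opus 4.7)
The plan is to prove both parts via the combinatorial structure of unipotent representations of finite unitary groups, reducing to the analysis of Harish-Chandra restrictions of $\pi_\lambda$ along a maximal parabolic. As in the proof of Theorem~\ref{a1}, the multiplicity $m(\pi_\lambda,\pi')$ for unitary groups can be expressed via an inner product of the form $\langle\pi_\lambda, I^{\UU_n}_{P}(\tau\otimes \pi')\rangle_{\UU_n\fq}$, where $P$ is a parabolic of $\UU_n$ with Levi $\GGL_{n-m}\times \UU_m$ and $\tau$ runs over generic representations of $\GGL_{n-m}\fq$. By Frobenius reciprocity this reduces the problem to understanding which partitions $\mu$ of $m$ appear in the Jacquet module of $\pi_\lambda$ along $P$, and with what multiplicity.

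First I would recall the branching rule for the Harish-Chandra restriction of unipotent representations of $\UU_n\fq$: the irreducible constituents indexed by partitions $\mu$ appearing in the Jacquet module of $\pi_\lambda$ are precisely those $\mu\subset\lambda$ such that the skew shape $\lambda/\mu$ has size $n-m$ and is constrained by the pairing with $\tau$. Because the first column of $\lambda$ has exactly $k$ boxes, any valid Jacquet reduction to a unitary factor must satisfy $n-m\le k$ (otherwise one would have to remove more than one box from some row, which forces $\mu$ outside the set of partitions, or forces the Levi contribution to vanish against any generic $\tau$). This gives the vanishing statement: $m(\pi_\lambda,\pi')=0$ whenever $m<n-k$.

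For the boundary case $m=n-k$ one removes exactly $k$ boxes. The compatibility with the generic representation $\tau$ on the $\GGL_{n-k}$ factor forces the removed skew shape to be a vertical strip meeting every row of $\lambda$, i.e.\ precisely the first column of $\lambda$. Thus the only partition $\mu$ that can occur is $\mu=\lambda^{-1}$, and the unipotent representation $\pi_{\lambda^{-1}}$ of $\UU_{n-k}\fq$ appears in the Jacquet module with multiplicity one, while all non-unipotent $\pi'$ are excluded because the Jacquet module of a unipotent representation is again a sum of unipotent representations. Combining this with the multiplicity-one input from \cite{LW2} yields $m(\pi_\lambda,\pi_{\lambda^{-1}})=1$ and $m(\pi_\lambda,\pi')=0$ for all other $\pi'\in\rm{Irr}(\UU_{n-k}\fq)$.

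The main obstacle is step two: the precise matching between the combinatorial operation of removing the first column of $\lambda$, the Hopf-algebra structure of the Jacquet restriction for unipotent representations, and the vanishing of cross-terms from the generic inducing data $\tau$ on the general linear factor. This is where the explicit symbol calculus of \cite{LW2} is essential, ensuring that among all skew removals of size $k$, only the vertical strip equal to the first column survives with nonzero inner product against a generic $\tau\otimes \pi_{\lambda^{-1}}$, giving exactly the descent $\pi_\lambda\rightsquigarrow\pi_{\lambda^{-1}}$ with multiplicity one.
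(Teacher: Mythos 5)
There is a genuine gap, and it starts at your very first step. Note first that the paper gives no proof of this statement at all: it is quoted verbatim as the main descent theorem of \cite{LW2}, and the method there (the same one the paper does spell out for Theorem \ref{a1}) is theta correspondence plus see-saw dual pairs, combined with the Aubert--Michel--Rouquier description of theta lifts of unipotent representations of unitary dual pairs; the condition "remove the first column'' comes out of minimizing $|\mu|$ subject to the resulting column-interlacing condition between $\lambda$ and $\mu$, not out of a Pieri rule. Your proposed identity $m(\pi_\lambda,\pi')=\langle\pi_\lambda, I^{\UU_n}_{P}(\tau\otimes\pi')\rangle_{\UU_n(\Fq)}$ cannot be set up: a maximal parabolic of $\UU_n$ has Levi $\GGL_j(\bb{F}_{q^2})\times\UU_{n-2j}$, so there is no parabolic with Levi $\GGL_{n-m}\times\UU_m$ (the dimension count $2(n-m)+m\ne n$ fails unless $m=n$). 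The quantity $m(\pi_\lambda,\pi')$ is a Bessel-model multiplicity for the pair $(\UU_n,\UU_m)$, and the correct "first reduction'' (the analogue of \cite[Proposition 5.2]{LW3} used in the proof of Theorem \ref{a1}) expresses it as a corank-one \emph{restriction} of a parabolically induced representation across two unitary groups of different ranks; Frobenius reciprocity therefore does not convert it into a Jacquet-module computation on $\UU_n$, which is the whole engine of your argument.

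The second step fails independently. The branching rule you invoke --- constituents of the Jacquet module of $\pi_\lambda$ indexed by $\mu\subset\lambda$ with $\lambda/\mu$ a skew shape of size $n-m$, with generic $\tau$ forcing a vertical strip --- is the $\GGL_n$ combinatorics. Harish-Chandra restriction of unipotent representations of $\UU_n(\Fq)$ to a Levi $\GGL_j(\bb{F}_{q^2})\times\UU_{n-2j}$ is governed by $2$-cores and $2$-quotients (removal of dominoes), not single boxes, and when $k$ is odd there is no Levi of $\UU_n$ containing a $\UU_{n-k}$ factor at all, so the operation "remove the first column'' (which removes $k$ boxes) cannot even be realized inside a single Jacquet module. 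To repair the proof you would need to replace both steps: use the see-saw
\[
(\UU_n\times\UU_n,\ \UU_n)\quad\text{against}\quad(\UU_m\times\UU_{n-m},\ \dots)
\]
to convert $m(\pi_\lambda,\pi_\mu)$ into a theta-lift multiplicity, apply the AMR formula giving nonvanishing exactly when the transposed partitions interlace in the sense of $\preccurlyeq$ of subsection \ref{sec4.2}, and then observe that the minimal $|\mu|$ compatible with that condition is $|\lambda|-\lambda^t_1=n-k$, attained only at $\mu=\lambda^{-1}$, with multiplicity one.
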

By the same argument, similar result holds for general linear groups. This branching problem from $\GGL_n\fq$ to $\GGL_{n-1}\fq$ is also fully analysed in \cite{T}. Then we get the descent of $\pi$ as follows:

\begin{theorem}\label{d1}
Let $G=\sp_{2n}$ and let $\pi\in\cal{E}(G^F,s)$. Suppose that
\[
\cal{L}_s(\pi)=\prod_{[a]}\pi{[a]}
\]
 and $\pi[a]$ is corresponding to partition $\lambda[a]$ for each $[a]$. Write $\lambda[a]^t=(\lambda[a]^t_1,\lambda[a]^t_2,\cdots)$.
 Then
$\ell_0=\ell_0^\rm{FJ}(\pi)=\sum_{[a]}\#[a]\cdot\lambda[a]^t_1$ and
\[
 \CD^\rm{FJ}_{}(\pi)=\pi'
\]
and
\[
m(\pi,\pi')=1
\]
where the image of Lusztig correspondence of $\pi'$ is
\[
\prod_{[a]}\pi'[a],
\]
and for each $[a]$, $\pi'[a]$ is a unipotent representation corresponding to partition $\lambda[-a]^{-1}$. In particular, if $\pi'\in \cal{E}(G^{\prime F},s')$ and  $C_{G^{*F}}(s)$ is a product of general linear groups, so is $C_{G^{\prime*F}}(s')$.
\end{theorem}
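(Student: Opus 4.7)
The plan is to combine Theorem~\ref{a1} with the branching rule for unipotent representations of $\GGL_m\fq$ and $\UU_m\fq$ recalled just above. By Theorem~\ref{a1}(ii), in searching for the largest $\ell$ such that some irreducible $\pi'$ of $\sp_{2(n-\ell)}\fq$ satisfies $m_\psi(\pi,\pi')\neq 0$, one may restrict $\pi'$ to Case~(A). For such $\pi'\in\cal{E}(\sp_{2(n-\ell)}^F,s')$ with Lusztig data $\prod_{[a]}\pi'[a]$, each $\pi'[a]$ being a unipotent representation of $G^{\prime *F}_{[a]}(s')$ corresponding to a partition $\mu[a]$ of $\nu_a(s')$, Theorem~\ref{a1}(i) factors the multiplicity as
$$m_\psi(\pi,\pi')=\prod_{[a]}m(\pi[a],\pi'[-a]),$$
so the problem reduces to maximizing $\ell$ subject to each factor being nonzero.

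Applying the branching theorem for $\GGL_m\fq$ and $\UU_m\fq$ to each factor, $m(\pi[a],\pi'[-a])=0$ unless $|\mu[-a]|\geq|\lambda[a]|-\lambda[a]^t_1=|\lambda[a]^{-1}|$, with multiplicity equal to $1$ and forced partition $\mu[-a]=\lambda[a]^{-1}$ whenever equality holds. Summing over $[a]$, and using that $[a]\mapsto[-a]$ is a bijection of the indexing set with $\#[-a]=\#[a]$ (since $(-a)^{q^k}=-a$ iff $a^{q^k}=a$, as $p>2$), one obtains
$$n-\ell=\sum_{[a]}\#[a]\cdot|\mu[a]|\geq\sum_{[a]}\#[a]\cdot|\lambda[-a]^{-1}|=n-\sum_{[a]}\#[a]\cdot\lambda[a]^t_1.$$
Hence $\ell_0^\rm{FJ}(\pi)\leq\sum_{[a]}\#[a]\cdot\lambda[a]^t_1$, with equality forcing $\mu[a]=\lambda[-a]^{-1}$ for every $[a]$. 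This determines $\pi'$ uniquely with the stated Lusztig data, and the total multiplicity is $\prod 1=1$. Since $\pi'$ is the only irreducible of $\sp_{2(n-\ell_0)}\fq$ with $m_\psi(\pi,\pi')\neq 0$, the descent $\CD^\rm{FJ}(\pi)$ is $m_\psi(\pi,\pi')=1$ copy of $\pi'$, so $\CD^\rm{FJ}(\pi)=\pi'$.

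For the final assertion, recall from Section~\ref{sec2.2} that for $a\neq\pm 1$ the factor $G^*_{[a]}(s)$ is of $\GGL$-type when $[a]\neq[a^{-1}]$ and of $\UU$-type when $[a]=[a^{-1}]$, a dichotomy depending only on the Frobenius orbit $[a]$. Since $\pi'$ lies in Case~(A) by construction, $s'$ has no $\pm 1$ eigenvalues, and the support of $s'$ is $\{[a]:\lambda[-a]^{-1}\neq\varnothing\}$, a subset of the support of $s$ after reindexing $a\mapsto-a$. Consequently every surviving factor of $C_{G^{\prime *F}}(s')$ inherits its $\GGL$/$\UU$ type from the corresponding factor of $C_{G^{*F}}(s)$, so if all factors of the latter are general linear groups, so are all factors of the former. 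The only delicate point beyond Theorem~\ref{a1} and the $\GGL$/$\UU$ branching rule is the compatibility of the involution $[a]\leftrightarrow[-a]$ with the Lusztig decomposition~(\ref{lc}); this is a straightforward bookkeeping check rather than a conceptual obstacle.
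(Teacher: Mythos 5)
Your argument is correct and is essentially the paper's own (implicit) proof: the paper presents Theorem \ref{d1} as a direct consequence of Theorem \ref{a1} together with the $\GGL/\UU$ branching rule, which is exactly the reduction you carry out, including the reduction to Case (A) via Theorem \ref{a1}(ii), the factor-by-factor minimization forcing $\mu[a]=\lambda[-a]^{-1}$, and the preservation of $\GGL$-type under $[a]\mapsto[-a]$. No gaps.
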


We next turn to consider the descent sequence of $\pi$. For an irreducible representation $\pi$ in Case (A), by Theorem \ref{a1} (ii) and Theorem \ref{d1}, the descent $\CD^\rm{FJ}_{}(\pi)$ of a irreducible representation $\pi$ is also an irreducible representation of Case (A). Furthermore, there is unique descent sequence index of $\pi$, and we denote it by $(\ell_1,\ell_2,\cdots)$.

\begin{corollary}\label{da}
Keep notations as above. We have $\ell_i=\sum_{[a]}\#[a]\cdot\lambda[a]^t_i$. In particular, $\ell_i\ge \ell_{i-1}$ and $\hat\ell=(2\ell_1,2\ell_2,\cdots)$ is a partition.
\end{corollary}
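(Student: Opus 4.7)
The plan is a straightforward induction on $i$, with Theorem \ref{d1} providing the single descent step and Theorem \ref{a1}(ii) ensuring that we stay in Case (A) so the step may be iterated. First, since $\pi$ lies in Case (A) and $C_{G^{*F}}(s)$ is a product of general linear (or unitary) groups, Theorem \ref{d1} tells us that $\CD^{\rm{FJ}}(\pi) = \pi'$ is a single irreducible representation; moreover $\pi'$ again lies in Case (A) (its dual centralizer is again a product of general linear groups). Consequently the descent sequence of $\pi$ is unique at every stage, and the formula for $\ell_1$ given in Theorem \ref{d1} is precisely the $i=1$ case of the claim.

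Next, let $\pi^{(i)}$ denote the $i$-th term of the descent sequence, with Lusztig parameters $\prod_{[a]} \pi^{(i)}[a]$, and let $\lambda^{(i)}[a]$ be the corresponding partitions. I will show by induction on $i$ that
\[
\lambda^{(i)}[a] \;=\; \begin{cases} \lambda[a]^{-i}, & i \text{ even},\\ \lambda[-a]^{-i}, & i \text{ odd}.\end{cases}
\]
The base case $i=0$ is trivial. For the inductive step, Theorem \ref{d1} applied to $\pi^{(i)}$ yields $\lambda^{(i+1)}[a] = (\lambda^{(i)}[-a])^{-1}$, and using $(\mu^{-j})^{-1} = \mu^{-(j+1)}$ together with $-(-a) = a$ one verifies the parity-swap directly. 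This also gives
\[
\ell_{i+1} \;=\; \sum_{[a]} \#[a]\cdot \bigl(\lambda^{(i)}[a]\bigr)^t_{1} \;=\; \sum_{[a]} \#[a]\cdot \lambda[\pm a]^t_{\,i+1},
\]
where the sign depends on the parity of $i$.

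The key small observation is that $[-a] = -[a]$ and $\#[-a] = \#[a]$; this holds because $q$ is odd, so $(-a)^{q^k} = -a^{q^k}$ and the Frobenius orbits of $a$ and $-a$ have the same cardinality. Substituting $b = -a$ in the odd-$i$ case therefore gives
\[
\ell_{i+1} \;=\; \sum_{[a]} \#[a]\cdot \lambda[a]^t_{\,i+1}
\]
uniformly in $i$, which is the asserted formula.

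Finally, for each $[a]$ the sequence $\lambda[a]^t_1,\lambda[a]^t_2,\ldots$ is weakly decreasing (being the transpose of a partition), and $\#[a] \ge 0$. Summing over $[a]$ preserves the inequalities, so $\ell_i \ge \ell_{i+1}$ for all $i$, and $\hat\ell = (2\ell_1,2\ell_2,\ldots)$ is a partition as defined in the paper. The only subtle point in the argument is correctly tracking the sign flip $[a] \leftrightarrow [-a]$ across iterated descents; once one notes that $\#[a] = \#[-a]$, that difficulty evaporates.
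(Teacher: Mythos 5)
Your proof is correct and follows essentially the same route as the paper, which simply asserts that the corollary "follows immediately from Theorem \ref{d1}": you iterate the single descent step of Theorem \ref{d1}, using Theorem \ref{a1}(ii) to stay in Case (A). The details you supply — the parity-swap $\lambda^{(i)}[a]=\lambda[\pm a]^{-i}$, the identity $(\lambda^{-j})^t_1=\lambda^t_{j+1}$, and the observation $\#[-a]=\#[a]$ allowing the substitution $b=-a$ — are exactly the content the paper leaves implicit.
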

\begin{proof}
It follows immediately from Theorem \ref{d1}.
\end{proof}

Then we discuss the relation between Deligne-Lusztig inductions and descents.
\begin{proposition}\label{dg1}
Let $\lambda$ be a partition of $n$ and $\pi_\lambda$ be the unipotent representation of $G=\GGL_n\fq$ (resp. $G=\UU_n\fq$). Let $L=\GGL_r\times\GGL_{n-r}$ (resp. $\UU_r\times\UU_{n-r}$) be a Levi subgroup of $\GGL_n$ (resp. $\UU_n$). Let $\sigma$ and $\tau$ be a irreducible representation of $\GGL_r\fq$ and $\GGL_{n-r}\fq$ (resp. $\UU_r\fq$ and $\UU_{n-r}\fq$), respectively. Suppose that $\sigma$ is generic. If
\begin{equation}\label{eq1}
\langle\pi_{\lambda},R^{G}_{L}(\sigma\otimes\tau)\rangle\ne 0
\end{equation}
then $r\le \lambda^t_1$.
\end{proposition}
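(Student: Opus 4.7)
The plan is to reduce the statement to a classical combinatorial fact about Littlewood--Richardson coefficients. First, since $\pi_\lambda\in\mathcal{E}(G^F,I)$ is unipotent and occurs as a constituent of $R^G_L(\sigma\otimes\tau)$, the compatibility of Deligne--Lusztig induction with the partition of $\mathrm{Irr}(L^F)$ into rational Lusztig series (cf. Proposition \ref{Lus} and the discussion in Section \ref{sec3}) forces $\sigma\otimes\tau$ to lie in the unipotent series $\mathcal{E}(L^F,I)$. Hence both $\sigma$ and $\tau$ are unipotent, and by Theorem \ref{thm3.3} we may write $\sigma=\pi_\mu$ for some partition $\mu$ of $r$ and $\tau=\pi_\nu$ for some partition $\nu$ of $n-r$.

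Second, I would use that $\sigma$ is generic. Among unipotent representations of $\GGL_r\fq$ (resp. $\UU_r\fq$), the Steinberg representation is the unique one admitting a Whittaker model, so the genericity assumption forces $\mu=(1^r)$. Applying the Lusztig--Srinivasan description of Deligne--Lusztig induction on unipotent representations, we obtain (up to an overall sign in the unitary case, which comes from the sign convention in Theorem \ref{thm3.3})
\[
\langle \pi_\lambda,\, R^G_L(\pi_{(1^r)}\otimes \pi_\nu)\rangle \;=\; \pm\, c^\lambda_{(1^r),\nu},
\]
where $c^\lambda_{\mu,\nu}$ denotes the classical Littlewood--Richardson coefficient for symmetric group representations. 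The hypothesis (\ref{eq1}) therefore yields $c^\lambda_{(1^r),\nu}\neq 0$ for some partition $\nu$ of $n-r$.

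Finally, I invoke Pieri's rule in its column form: $c^\lambda_{(1^r),\nu}\neq 0$ if and only if the skew shape $\lambda/\nu$ is a vertical strip of size $r$, i.e.\ a set of $r$ cells, no two of which lie in the same row of $\lambda$. The existence of such a strip forces $r$ to be at most the number of nonzero rows of $\lambda$, which by definition is $\lambda^t_1$. This gives the desired inequality $r\le \lambda^t_1$.

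The only delicate point I anticipate is bookkeeping the signs in the unitary case, where $\pi_\lambda=\pm R^{\UU_n}_{\sigma_\lambda}$ and the Deligne--Lusztig induction formula picks up Ennola-type signs; however, since the conclusion depends only on the non-vanishing of $c^\lambda_{(1^r),\nu}$ and not on its sign, these signs are immaterial to the proof. The rest is a direct combinatorial computation that I would not spell out beyond citing Pieri's rule.
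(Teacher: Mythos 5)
Your proposal is correct and follows essentially the same route as the paper's proof: reduce to the unipotent series, use genericity to force $\mu=(1^r)$, and conclude via the Littlewood--Richardson/Pieri rule (the paper delegates this last combinatorial step to the discussion in Adams--Moy, whereas you spell out the vertical-strip argument explicitly). No gaps.
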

\begin{proof}
We will only prove the general linear group case. The proof of the unitary group case is similar and will be left to the
reader.

Since $\pi_\lambda$ is unipotent, we can conclude that if (\ref{eq1}) holds,
then $\sigma$ and $\tau$ are both unipotent. So we can associate $\sigma$ and $\tau$ with two partitions $\mu$ and $\mu'$, respectively. Since $\sigma$ is generic, we get $\mu=(1,1,\cdots,1)$. This proposition follows from Littlewood-Richardson Rule and the discussion in \cite[section 3]{AM}.
\end{proof}

\begin{proposition}\label{prop6.6}
Let $G=\sp_{2n}$ and
let $\pi\in \cal{E}(G^F,s)$ in Case (A) such that $C_{G^{*F}}(s)$ is a product of general linear groups. Let $P$ be a parabolic subgroup of $G$ with Levi factor $\GGL_r\times\sp_{2(n-r)}$. Let $\sigma$ and $\tau$ be a irreducible representation of $\GGL_r\fq$ and $\sp_{2(n-r)}\fq$, respectively. Suppose that $\sigma$ is generic. If
\[
\langle\pi,\rm{Ind}^{G^F}_{P^F}(\sigma\otimes\tau)\rangle\ne 0
\]
then $r\le \ell_0^\rm{FJ}(\pi)=\sum_{[a]}\#[a]\cdot\lambda[a]^t_1$.
\end{proposition}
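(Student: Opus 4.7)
The plan is to reduce Proposition~\ref{prop6.6} to the general linear/unitary analogue Proposition~\ref{dg1} by transporting the pairing through the Lusztig correspondence, exactly the strategy used for Theorems~\ref{a1} and~\ref{d1} earlier in this section. First, write $\sigma\in\cal{E}(\GGL_r^F,s_\sigma)$ and $\tau\in\cal{E}(\sp_{2(n-r)}^F,s_\tau)$. For the pairing $\langle\pi,\rm{Ind}^{G^F}_{P^F}(\sigma\otimes\tau)\rangle$ to be nonzero, compatibility of Lusztig series with parabolic induction forces the class $(s)\subset\so_{2n+1}^{*F}$ to be the one assembled from $s_\sigma,s_\sigma^{-1}$ on the dual $\GGL_r$ block and from $s_\tau$ on the $\so_{2(n-r)+1}$ block. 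Because $s$ has no eigenvalues $\pm 1$ and $C_{G^{*F}}(s)$ is a product of general linear groups, the same is true of $s_\sigma$ and $s_\tau$, so $\tau$ is again in Case (A) with centralizer a product of general linear groups.

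Next, apply the Lusztig correspondence to both sides. Compatibility of Deligne-Lusztig induction with $\cal{L}$ (a consequence of (\ref{Lus2})) gives
\[
\cal{L}_s\bigl(\rm{Ind}^{G^F}_{P^F}(\sigma\otimes\tau)\bigr)=\pm\,\rm{Ind}^{C_{G^{*F}}(s)}_{M^F}\bigl(\cal{L}_{s_\sigma}(\sigma)\otimes\cal{L}_{s_\tau}(\tau)\bigr)
\]
for an appropriate Levi $M$ of $C_{G^*}(s)$. Via the decomposition (\ref{decomp}), $C_{G^{*F}}(s)=\prod_{[a]}G^{*F}_{[a]}(s)$ is a product of general linear or unitary groups and the induction above splits as a tensor product of parabolic inductions, one on each $[a]$-factor. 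Under this splitting the rank $r$ distributes as $r=\sum_{[a]}\#[a]\cdot r_a$, where $r_a$ is the rank of the $\GGL$-part of the Levi on the $[a]$-block of the centralizer.

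Now invoke genericity. Since $\sigma$ is generic on $\GGL_r\fq$, a standard fact for finite general linear and unitary groups identifies its Lusztig image as the generic unipotent representation on each centralizer factor, so it corresponds to the partition $(1^{r_a})$ on the $[a]$-block. Pair against $\cal{L}_s(\pi)=\prod_{[a]}\pi_{\lambda[a]}$ and apply Proposition~\ref{dg1} on each factor $G^{*F}_{[a]}(s)$ to obtain $r_a\le\lambda[a]^t_1$. Summing gives
\[
r=\sum_{[a]}\#[a]\cdot r_a\;\le\;\sum_{[a]}\#[a]\cdot\lambda[a]^t_1=\ell_0^\rm{FJ}(\pi),
\]
the last equality being the formula from Theorem~\ref{d1}.

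The main technical obstacle is the clean formulation of the Lusztig correspondence compatibility with parabolic induction from a Levi that mixes a $\GGL$-factor with a $\sp$-factor: one must verify that the decomposition (\ref{decomp}) splits the induction faithfully on each eigenvalue block, that the semisimple parameters $s_\sigma,s_\tau$ distribute correctly among the blocks $G^{*F}_{[a]}(s)$, and that the generic property of $\sigma$ descends to genericity of each block. Once these bookkeeping points are in place, the reduction to Proposition~\ref{dg1} is formal and the inequality follows by the summation above.
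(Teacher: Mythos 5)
Your proposal is correct and follows essentially the same route as the paper: both transfer the pairing to the dual side via the Lusztig correspondence (justified by uniformity of $\pi$ and the compatibility (\ref{Lus2}) with Deligne--Lusztig induction), decompose over the eigenvalue blocks $G^{*F}_{[a]}(s)$, and apply Proposition \ref{dg1} factor by factor before summing with $r=\sum_{[a]}\#[a]\cdot r_a$. The bookkeeping points you flag (distribution of the semisimple parameters among blocks, genericity of each $\sigma[a]$) are exactly the points the paper also asserts without further detail.
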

\begin{proof}
Note that $\pi$ is uniform, so in this case Lusztig correspondence is uniquely determined by Deligne-Lusztig characters. Moreover, Lusztig correspondence preserve the parabolic induction in uniform case. These facts allow us to prove this proposition on the dual group side. In other words,
\[
\langle\pi,R^{G}_{P}(\sigma\otimes\tau)\rangle=\langle\pi,\rm{Ind}^{G^F}_{P^F}(\sigma\otimes\tau)\rangle\ne 0
\]
if and only if
\[
\prod_{[a]}\langle\pi[a],\epsilon[a]R^{G_{[a]}(s)}_{L[a]}(\sigma[a]\otimes\tau[a])\rangle\ne 0
\]
where for each $[a]$, $\epsilon[a]\in\{\pm1\}$ depends on $G_{[a]}(s)$, and $L[a]=\GGL_{r[a]}\times\GGL_{n[a]-r[a]}$ is a certain Levi subgroup of $G_{[a]}(s)$, and $\sigma[a]$ is a generic representation. Then this proposition follows immediately from Proposition \ref{dg1}.

\end{proof}

\subsection{Case (B)}\label{sec6.2}
We have proved that for any irreducible representation $\pi$ in case (A), the descent sequence of $\pi$ is unique and the descent sequence index is a partition. But this is not always the case in case (B).

To begin with, let us recall the multiplicity in the Gan-Gross-Prasad problem in \cite{Wang1}, and calculate the descent of irreducible representations in Case (B).
Let
\[
\begin{aligned}
&\mathcal{G}^{\rm{even},+}_{n,m}:=\left\{(\Lambda,\Lambda')|
\Upsilon(\Lambda')_*\preccurlyeq\Upsilon(\Lambda)_*,\Upsilon(\Lambda)^*\preccurlyeq\Upsilon(\Lambda')^*,\rm{def}(\Lambda)>0,\rm{def}(\Lambda')=\rm{def}(\Lambda)-1\right\}; \\
&\mathcal{G}^{\rm{even},-}_{n,m}:=\left\{(\Lambda,\Lambda')|\Upsilon(\Lambda')_*\preccurlyeq\Upsilon(\Lambda)^*,
\Upsilon(\Lambda)_*\preccurlyeq\Upsilon(\Lambda')^*,\rm{def}(\Lambda)>0,\rm{def}(\Lambda')=-\rm{def}(\Lambda)-1\right\}; \\
&\mathcal{G}^{\rm{odd},-}_{n,m}:=\left\{(\Lambda,\Lambda')|\Upsilon(\Lambda')^*\preccurlyeq \Upsilon(\Lambda)^*,
\Upsilon(\Lambda)_*\preccurlyeq\Upsilon(\Lambda')_*,\rm{def}(\Lambda)<0,\rm{def}(\Lambda')=\rm{def}(\Lambda)+1\right\};\\
&\mathcal{G}^{\rm{odd},+}_{n,m}:=\left\{(\Lambda,\Lambda')|\Upsilon(\Lambda')^*\preccurlyeq \Upsilon(\Lambda)_*,
\Upsilon(\Lambda)^*\preccurlyeq\Upsilon(\Lambda')_*,\rm{def}(\Lambda)<0,\rm{def}(\Lambda')=-\rm{def}(\Lambda)+1\right\}
\end{aligned}
\]
be subsets of $\cal{S}_n\times\cal{S}_m^{\pm}$ where $\Upsilon(\Lambda)^*$ and $\Upsilon(\Lambda)_*$ are defined in subsection \ref{sec4.2}. Let
\[
\mathcal{G}=\bigcup_{n,m}\left(\mathcal{G}^{\rm{even},+}_{n,m}\bigcup\mathcal{G}^{\rm{even},-}_{n,m}
\bigcup\mathcal{G}^{\rm{odd},-}_{n,m}\bigcup\mathcal{G}^{\rm{odd},+}_{n,m}\right).
\]

\begin{theorem}[\cite{Wang1} Theorem 1.4 (i)]\label{u1}
Assume that $q$ is large enough such that the main theorem in \cite{S} holds.  Let $n\ge m$. Let $\prll\in\rm{Irr}(\sp_{2n}\fq)$ and $\pi_{\rho_1,\Lambda_1,\Lambda_1'}\in\rm{Irr}(\sp_{2m}\fq)$. Then we have
\[
\begin{aligned}
m_\psi(\prll,\pi_{\rho_1,\Lambda_1,\Lambda_1'})
=\left\{
\begin{array}{ll}
m_\psi(\pw_{\rho} ,\pw_{\rho_1}),  &\textrm{if }(\prll,\pi_{\rho_1,\Lambda_1,\Lambda_1'})\textrm{ is $(\psi,\ee)$-strongly relevant, and there}\\
&\textrm {are }
\widetilde{\Lambda_1'}\in\{\Lambda_1',\Lambda_1^{\prime t}\}
\textrm{ and }\widetilde{\Lambda'}\in\{\Lambda',\Lambda^{\prime t}\}\textrm{ such that }(\Lambda,\widetilde{\Lambda_1'})\\
&\textrm{and }(\Lambda_1,\wla')\in \mathcal{G};\\
0,&\textrm{otherwise}.
\end{array}\right.
\end{aligned}
\]
Here the definition of $(\psi,\ee)$-strongly relevant pair is in \cite[Section 6.3]{Wang1}.

\end{theorem}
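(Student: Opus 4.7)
The plan is to attack the multiplicity $m_\psi(\prll,\pi_{\rho_1,\Lambda_1,\Lambda_1'})$ via theta correspondence and see-saw duality, following the template already used in the proof of Theorem~\ref{a1}. First I would lift both representations to dual groups of orthogonal type: let $\sigma=\Theta^+_{n,n,\psi}(\prll)$ sit on $\o^\ee_{2n+1}\fq$ (after restriction, on $\o^+_{2n}\fq$) and $\sigma_1=\Theta^\ee_{m,m,\psi}(\pi_{\rho_1,\Lambda_1,\Lambda_1'})$ on $\o^\ee_{2m+1}\fq$. The key point is that the Fourier--Jacobi branching from $\sp_{2n}$ to $\sp_{2m}$ translates via a see-saw diagram with pair $(\sp_{2n}\times\sp_{2n},\o^\ee_{2n+1})$ against $(\sp_{2n},\o^+_{2n}\times\o^\ee_1)$ into a Bessel type branching for orthogonal groups, namely an inner product between $\sigma$ and a parabolic induction $I^{\o^\ee_{2n+1}}_{P'}(\tau\otimes\sigma_1)$ with $\tau$ a generic representation on $\GGL_{n-m}$.

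Next I would pass to $\so$-restrictions to put the problem squarely in the setting of the finite Gan--Gross--Prasad conjecture for special orthogonal groups, and apply \cite[Proposition~5.2]{LW3} together with \cite[Theorem~1.3]{Wang2}. These decompose the multiplicity as a product over the $[a]$-blocks of the centralizer using the Lusztig correspondence. On each block $[a]\ne [\pm 1]$, one gets a branching problem for general linear or unitary groups, producing $m_\psi(\pw_\rho,\pw_{\rho_1})$. On the $[\pm 1]$-blocks, the branching reduces to the unipotent GGP problem for special orthogonal groups; combined with the classification of unipotent representations by symbols and Pan's matching \cite{P2} of Lusztig correspondence with theta correspondence, the unipotent multiplicity is computed by comparing the symbols $\Lambda,\Lambda',\Lambda_1,\Lambda_1'$ after the theta shift.

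The third step is to turn the unipotent multiplicity statement into the combinatorial condition $(\Lambda,\widetilde{\Lambda_1'}),(\Lambda_1,\widetilde{\Lambda'})\in\mathcal{G}$. The theta correspondence swaps rows of symbols up to the transposition $\Lambda'\leftrightarrow\Lambda^{\prime t}$, which is precisely what introduces the choice of $\widetilde{\Lambda'}\in\{\Lambda',\Lambda^{\prime t}\}$ (and similarly for $\widetilde{\Lambda_1'}$). Unwinding the defect constraints for the four subfamilies $\mathcal{G}^{\textrm{even},\pm}$, $\mathcal{G}^{\textrm{odd},\pm}$ amounts to tracking the parity of defects under theta lifts and comparing first and second rows via $\preccurlyeq$; the latter encodes exactly the non-vanishing criterion for Bessel models between unipotent representations of consecutive orthogonal groups. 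The definition of $(\psi,\ee)$-strongly relevant pair is designed to book-keep the Weil representation sign and the choice of character $\psi$ so that both sides of the see-saw identity live on the same orthogonal tower.

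The main obstacle I anticipate is the bookkeeping in the third step: controlling the four cases of $\mathcal{G}$ uniformly while keeping track of the sign $\ee$, the transposition $\Lambda'\leftrightarrow\Lambda^{\prime t}$ forced by theta, and the parabolic induction by $\tau$. A secondary technical difficulty is ensuring that the restriction from $\o$ to $\so$ preserves irreducibility and the multiplicities in the relevant range, which uses that $-I$ acts on $\sigma$ and on $I^{\o^\ee_{2n+1}}_{P'}(\tau\otimes\sigma_1)$ by the same scalar. Once these are in place, the product formula $m_\psi(\pw_\rho,\pw_{\rho_1})$ on the $[a]\ne[\pm 1]$-blocks follows from the GGP result for general linear and unitary groups, completing the proof.
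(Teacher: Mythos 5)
The paper does not prove this statement: the theorem environment carries the tag ``\cite{Wang1} Theorem 1.4 (i)'', and the result is simply quoted from that earlier paper with no proof in the present one (the only surrounding text is Remark \ref{ur}). So there is no internal proof to compare your attempt against. What can be said is that your sketch follows the same theta/see-saw template the present paper uses in its proof of Theorem \ref{a1}, which handles the easier Case (A): lift both symplectic representations by theta to an orthogonal tower, use the see-saw identity with the pair $(\sp_{2n}\times\sp_{2n},\o^{\ee}_{2n+1})$ against $(\sp_{2n},\o^{+}_{2n}\times\o^{\ee}_{1})$ to turn the Fourier--Jacobi multiplicity into a Bessel-type pairing for orthogonal groups, restrict to $\so$, and then invoke the GGP results of \cite{LW3,Wang2} together with the Lusztig--theta compatibility of \cite{P2}. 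That is indeed the method in the authors' circle of work and is a reasonable reconstruction of what \cite{Wang1} does.

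Two cautions. First, a small technical slip: in the notation of this paper, $\sigma=\Theta^{+}_{n,n,\psi}(\pi)$ is a representation of $\o^{+}_{2n}\fq$ directly, not of $\o^{\ee}_{2n+1}\fq$; it is the other leg $\sigma'=\Theta^{\ee}_{m,m,\psi}(\pi')$ that sits on $\o^{\ee}_{2m+1}\fq$. Second, and more substantively, the essential content of Theorem \ref{u1} that distinguishes it from Theorem \ref{a1} is the explicit combinatorial criterion on the $[\pm1]$-blocks, namely the symbol conditions $(\Lambda,\widetilde{\Lambda_1'}),(\Lambda_1,\widetilde{\Lambda'})\in\mathcal{G}$ together with the $(\psi,\ee)$-strong relevance bookkeeping, which resolves the two-fold ambiguity $\Lambda'\leftrightarrow\Lambda^{\prime t}$ inherent in the Lusztig parametrization. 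Your proposal names these ingredients but treats them as "bookkeeping"; in fact this is where almost all of the work of \cite{Wang1} lies (deriving the four subfamilies $\mathcal{G}^{\mathrm{even},\pm}$, $\mathcal{G}^{\mathrm{odd},\pm}$ from the unipotent GGP criterion and pinning down the $(\psi,\ee)$-dependence), and a complete proof would need that analysis spelled out rather than deferred.
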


\begin{remark}\label{ur}
In this paper, we need not know what $(\psi,\ee)$-strongly relevant pair actually is. Instead of its definition, we shall explain why we need this condition in Theorem \ref{u1}. Assume that there are $\widetilde{\Lambda_1'}\in\{\Lambda_1',\Lambda_1^{\prime t}\}$ and $\widetilde{\Lambda'}\in\{\Lambda',\Lambda^{\prime t}\}$ such that $(\Lambda,\widetilde{\Lambda_1'})$ and $(\Lambda_1,\wla')\in \mathcal{G}$. Since our parameterization of irreducible representations depends on the choice of Lusztig correspondence, we can not distinguish representations in the set
$
\{\pi_{\rho_1,\Lambda_1,\Lambda_1'},\pi_{\rho_1,\Lambda_1,\Lambda_1^{\prime t}}\}
$
 by our parameterization. However the parametrization of irreducible representations is not involved in the definition of $(\psi,\ee)$-strongly
relevant pair, so one can differ them by this condition of $(\psi,\ee)$-strongly
relevant pair. In fact, for a fixed $\prll$, there exists exactly one representation $\pi\in\{\pi_{\rho_1,\Lambda_1,\Lambda_1'},\pi_{\rho_1,\Lambda_1,\Lambda_1^{\prime t}}\}$ such that the pair $(\prll,\pi)$ is $(\psi,\ee)$-strongly relevant.
\end{remark}

\begin{corollary}\label{u2}
Assume that $q$ is large enough such that the main theorem in \cite{S} holds. Let $n\ge m$.

(i) Let $\pl\in\rm{Irr}(\sp_{2n}\fq)$ be an irreducible unipotent representation. For an irreducible representation $\prllc$ of $\sp_{2m}\fq$, we have
\[
m_\psi(\pl,\prllc)=\left\{
\begin{array}{ll}
1, &\textrm{if }(\pl,\prllc)\textrm{ is $(\psi,\ee)$-strongly relevant, and there is } \widetilde{\Lambda_1'}\in\{\Lambda_1',\Lambda_1^{\prime t}\}\\
&\textrm{such that }(\Lambda,\widetilde{\Lambda_1'})\in \mathcal{G} ,\textrm{ and } \pi_{\Lambda_1} \textrm{ and $\rho$ are generic};\\
0, & \textrm{otherwise.}
\end{array}\right.
\]

(ii) Let $\pi_{\Lambda'}\in\rm{Irr}(\sp_{2n}\fq)$ be an irreducible $\theta$-representation. For an irreducible representation $\prllc$ of $\sp_{2m}\fq$, we have
\[
m_\psi(\pi_{\Lambda'},\prllc)=\left\{
\begin{array}{ll}
1, &\textrm{if }(\pi_{\Lambda'},\prllc)\textrm{ is $(\psi,\ee)$-strongly relevant, and there is} \\
&\widetilde{\Lambda'}\in\{\Lambda',\Lambda^{\prime t}\}\textrm{ such that }(\Lambda_1,\wla')\in \mathcal{G} ,\textrm{ and } \pi_{\Lambda_1'} \textrm{ and $\rho$ are generic};\\
0, & \textrm{otherwise.}
\end{array}\right.
\]

\end{corollary}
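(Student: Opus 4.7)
The plan is to deduce Corollary \ref{u2} as a specialization of Theorem \ref{u1}. Under the parametrization $\pi_{\rho,\Lambda,\Lambda'}$ introduced in Section \ref{4.4}, a unipotent representation is $\pi_\Lambda = \pi_{-,\Lambda,-}$ and a $\theta$-representation is $\pi_{\Lambda'} = \pi_{-,-,\Lambda'}$. Both parts of the corollary should therefore fall out by plugging the appropriate degenerate parameters into Theorem \ref{u1} and analyzing what remains of the two hypotheses (the $(\psi,\ee)$-strongly relevant condition plus the symbol membership in $\mathcal{G}$) and of the factor multiplicity $m_\psi(\pi_\rho,\pi_{\rho_1})$ on the right-hand side.

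For part (i), I would first substitute $\rho = -$ and $\Lambda' = -$ into Theorem \ref{u1} to obtain
\[
m_\psi(\pi_\Lambda, \pi_{\rho,\Lambda_1,\Lambda_1'}) = m_\psi(\pi_-,\pi_{\rho}),
\]
provided the pair is $(\psi,\ee)$-strongly relevant and there exist $\widetilde{\Lambda_1'} \in \{\Lambda_1',\Lambda_1^{\prime t}\}$ with $(\Lambda,\widetilde{\Lambda_1'}) \in \mathcal{G}$ together with $(\Lambda_1, -) \in \mathcal{G}$ (taking $\widetilde{\Lambda'} = -$). Next I would analyze the degenerate geometric condition $(\Lambda_1,-) \in \mathcal{G}$: inspecting the four families $\mathcal{G}^{\rm{even},\pm}$ and $\mathcal{G}^{\rm{odd},\pm}$ with the trivial symbol on one side shows that the $\preccurlyeq$ relations involving $\Upsilon(-)$ become vacuous and only a single one-sided constraint on $\Lambda_1$ survives; by the classification of generic unipotent representations of finite symplectic groups, this constraint is precisely equivalent to $\pi_{\Lambda_1}$ being generic. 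Finally, the surviving factor $m_\psi(\pi_-,\pi_\rho)$ is evaluated using the product formula of Theorem \ref{a1}(i): iterating the Case (A) descent results of Section \ref{sec6.1} across the $\GGL$ and $\UU$ eigenvalue components of $\rho$ down to the trivial group yields $m_\psi(\pi_-,\pi_\rho) = 1$ precisely when $\rho$ is generic, and $0$ otherwise. Combining these three ingredients produces the dichotomy stated in (i).

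Part (ii) is then obtained by the dual specialization $\rho = -$, $\Lambda = -$, so that $\pi_{-,-,\Lambda'} = \pi_{\Lambda'}$. The argument runs symmetrically, with the roles of $\Lambda_1$ and $\Lambda_1'$ interchanged; the specialized symbol condition $(-, \widetilde{\Lambda_1'}) \in \mathcal{G}$ now forces $\pi_{\Lambda_1'}$ to be generic, while the other geometric condition becomes the original $(\Lambda_1,\widetilde{\Lambda'}) \in \mathcal{G}$, and the evaluation of the factor multiplicity proceeds exactly as before.

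The main obstacle will be the case-by-case symbol bookkeeping: one must verify across each of the four families of $\mathcal{G}$ that the degenerate $\preccurlyeq$ relations, together with the defect parity constraints $\rm{def}(\Lambda_1) = \pm\rm{def}(-) \pm 1 = \pm 1$, translate cleanly into the combinatorial criterion characterizing a generic unipotent (resp. $\theta$-) symbol of a symplectic group. A secondary technical point is rigorously justifying that $m_\psi(\pi_-,\pi_\rho) = 1$ iff $\pi_\rho$ is generic across all $\GGL$ and $\UU$ components; this should follow immediately from the Case (A) multiplicity formula of Theorem \ref{a1}(i) applied inductively to the boundary where the smaller group is trivial.
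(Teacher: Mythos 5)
Your proposal is correct and matches the paper's (implicit) argument: the paper states this as an unproved corollary of Theorem \ref{u1}, obtained exactly by the specialization $\rho=-$, $\Lambda'=-$ (resp.\ $\rho=-$, $\Lambda=-$), with the degenerate condition $(\Lambda_1,-)\in\mathcal{G}$ translating into genericity of $\pi_{\Lambda_1}$ and the factor $m_\psi(\pi_-,\pi_\rho)$ evaluating to $1$ precisely when $\rho$ is generic. The only slight imprecision is that the two $\preccurlyeq$ relations with the empty symbol are both nontrivial (one forces a row of the bipartition to vanish, the other forces the remaining row to be $(1^j)$), which together single out the generic symbol; this does not affect the conclusion.
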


\begin{corollary}\label{u3}
(i) Let $\pi$ be an irreducible unipotent representation of $\sp_{2n}\fq$. Then $\cal{D}^{\rm{FJ}}_\psi(\pi)$ is a $\theta$-representation of $\sp_{2m}\fq$ with $n\ge m$.

(ii) Let $\pi$ be an irreducible $\theta$-representation of $\sp_{2n}\fq$. Then there is $\psi$ such that $\cal{D}^{\rm{FJ}}_\psi(\pi)$ is a unipotent representation of $\sp_{2m}\fq$ with $n\ge m$ such that $\ell_{0,\psi}^{\rm{FJ}}(\pi)\ge\ell_{0,\psi'}^{\rm{FJ}}(\pi) $ for any character $ \psi'$ of $\Fq$.

\end{corollary}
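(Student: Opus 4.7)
The plan is to deduce both parts of the corollary directly from Corollary \ref{u2} by combining the genericity constraints on the descent side with the rank-minimality that characterizes the first occurrence.

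For part (i), let $\prllc$ be any irreducible component of $\cal{D}^{\rm{FJ}}_\psi(\pi_\Lambda)$, so that $m_\psi(\pi_\Lambda, \prllc) \ne 0$. By Corollary \ref{u2}(i), both $\rho$ and $\pi_{\Lambda_1}$ must be generic, and there must exist $\widetilde{\Lambda_1'}$ with $(\Lambda, \widetilde{\Lambda_1'}) \in \mathcal{G}$. If the descent group is $\sp_{2m}\fq$, its rank decomposes additively as $m = \sum_{[a]\ne[\pm 1]} \#[a]\, \nu_a(s') + \nu_1(s') + \nu_{-1}(s')$, where the three summands correspond to the $\rho$-, $\Lambda_1$-, and $\Lambda_1'$-pieces respectively. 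Crucially, the combinatorial condition $(\Lambda, \widetilde{\Lambda_1'}) \in \mathcal{G}$ involves only $\Lambda$ and $\Lambda_1'$ and is independent of $\rho$ and $\Lambda_1$; therefore the minimum admissible value of $\nu_{-1}(s')$ is fixed by $\Lambda$ alone. Since first occurrence corresponds to the maximum $\ell_0 = n - m$, it is realized by the smallest possible $m$, which forces $\rho$ trivial ($\nu_a(s') = 0$ for $a \ne \pm 1$) and $\Lambda_1$ trivial ($\nu_1(s') = 0$). Hence every irreducible component of the descent takes the form $\pi_{-,-,\Lambda_1'} = \pi_{\Lambda_1'}$, a $\theta$-representation.

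For part (ii) the argument is parallel. For any $\psi$, Corollary \ref{u2}(ii) constrains each irreducible component $\prllc$ of $\cal{D}^{\rm{FJ}}_\psi(\pi_{\Lambda'})$ by requiring $\rho$ and $\pi_{\Lambda_1'}$ to be generic, together with $(\Lambda_1, \widetilde{\Lambda'}) \in \mathcal{G}$, a condition that depends only on $\Lambda'$ and $\Lambda_1$. The same additive rank decomposition of the descent group, together with the independence of the $\mathcal{G}$-constraint from $\rho$ and $\Lambda_1'$, forces both $\rho$ and $\Lambda_1'$ to be trivial at first occurrence. The optimal $\psi$ is the one for which the strong-relevance condition (as recalled in Remark \ref{ur}) permits the smallest orthogonal-group rank on the descent side; the square-class freedom in $\psi$ governs the sign $\epsilon$ of $\o^\epsilon_{2\nu_{-1}(s')}$ via (\ref{5.2})--(\ref{5.3}), and choosing the correct square class maximizes $\ell_{0,\psi}^{\rm{FJ}}(\pi_{\Lambda'})$ and yields a pure unipotent descent $\pi_{\Lambda_1}$.

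The main obstacle is the careful handling of the $(\psi,\ee)$-strong relevance condition alongside the symbol-pairing condition in $\mathcal{G}$. Specifically, one must verify that the inequalities constraining $\Lambda_1'$ (resp.\ $\Lambda_1$) do not subtly couple to $\rho$ or to the other nilpotent-type piece in a way that would permit smaller $m$ by redistribution of ranks across the three components; it is this independence of the $\mathcal{G}$-constraint from the non-target pieces that makes the minimality argument go through. Once this is in hand, the conclusion is immediate, with the $\psi$-dependence in (ii) accounted for by the dichotomy of square classes entering the sign $\epsilon$.
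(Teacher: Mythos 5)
Your proposal is correct and follows essentially the same route as the paper: both rest on the fact that the multiplicity criterion of Corollary \ref{u2} constrains $\Lambda_1'$ (resp.\ $\Lambda_1$) only through $\Lambda$ (resp.\ $\Lambda'$), so that stripping off the $\rho$- and $\Lambda_1$-pieces (with Remark \ref{ur} supplying the strongly relevant choice in $\{\Lambda_1',\Lambda_1'^{t}\}$) produces a strictly smaller group still carrying nonzero multiplicity, which is incompatible with first occurrence unless those pieces are already trivial. The paper phrases this as a contradiction rather than a minimality argument, but the content is identical.
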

\begin{proof}
We will only prove the case (i). The proof of the case (ii) is similar.

Assume that $\pi=\pl$ and $\prllc$ is a irreducible representation of $\sp_{2n'}\fq$ appearing in the descent of $\pi$. Suppose that $\prllc$ is not a $\theta$-representation. Then either $\rho$ or $\Lambda_1\ne 0$. Since $m_\psi(\pl,\prllc)\ne 0$, and by Corollary \ref{u2}, we have either $(\Lambda,\Lambda_1')\in \mathcal{G}$ or $(\Lambda,\Lambda_1^{\prime t})\in \mathcal{G}$. By Remark \ref{ur}, there exists a $\widetilde{\Lambda_1'}\in\{\Lambda_1',\Lambda_1^{\prime t}\}$ such that the pair $(\pl,\pi_{\widetilde{\Lambda_1'}})$ is $(\psi,\ee)$-strongly relevant. So we find an irreducible $\theta$-representation
$\pi_{\widetilde{\Lambda_1'}}$ of $\sp_{2n''}\fq$ with $n''< n'$ such that $m_\psi(\pl,\pi_{\widetilde{\Lambda_1'}})\ne 0$, which is a contradiction of definition of the descent.
\end{proof}

\begin{corollary}\label{u4}
Assume that $q$ is large enough such that the main theorem in \cite{S} holds. Let $n\ge m$.

(i) Let $\pl\in\rm{Irr}(\sp_{2n}\fq)$ be an irreducible unipotent representation. For an irreducible $\theta$-representation $\pi_{\Lambda'}$ of $\sp_{2m}\fq$, we have
\[
m_\psi(\pl,\pi_{\Lambda'})=\left\{
\begin{array}{ll}
1, &\textrm{if }(\pl,\pi_{\Lambda'})\textrm{ is $(\psi,\ee)$-strongly relevant and there is } \widetilde{\Lambda'}\in\{\Lambda',\Lambda^{\prime t}\}\\
&\textrm{such that }(\Lambda,\widetilde{\Lambda'})\in \mathcal{G};\\
0, & \textrm{otherwise.}
\end{array}\right.
\]

(ii) Let $\pi_{\Lambda'}\in\rm{Irr}(\sp_{2n}\fq)$ be an irreducible $\theta$-representation. For an irreducible unipotent representation $\pl$ of $\sp_{2m}\fq$, we have
\[
m_\psi(\pi_{\Lambda'},\pl)=\left\{
\begin{array}{ll}
1, &\textrm{if }(\pi_{\Lambda'},\pl)\textrm{ is $(\psi,\ee)$-strongly relevant, and there is } \widetilde{\Lambda'}\in\{\Lambda',\Lambda^{\prime t}\}\\
&\textrm{such that }(\Lambda,\wla')\in \mathcal{G};\\
0, & \textrm{otherwise.}
\end{array}\right.
\]

\end{corollary}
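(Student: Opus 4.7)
The plan is to derive Corollary \ref{u4} as a direct specialization of Corollary \ref{u2} (or equivalently of Theorem \ref{u1}). For part (i), a $\theta$-representation $\pi_{\Lambda'}$ of $\sp_{2m}\fq$ is, in the notation $\pi_{\rho_1,\Lambda_1,\Lambda_1'}$ used in Theorem \ref{u1}, obtained by taking $\rho_1$ to be the trivial representation of the trivial group, $\Lambda_1$ the trivial (empty) symbol, and $\Lambda_1'=\Lambda'$. Similarly, $\pl$ is obtained from $\pi_{\rho,\Lambda,\Lambda'}$ by setting $\rho$ trivial and $\Lambda'$ trivial. I would plug these choices into Corollary \ref{u2}~(i), whose conclusion in full generality requires a compatibility condition on the paired symbol $\Lambda_1'$ together with the genericity of $\pi_{\Lambda_1}$ and of $\rho$.

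Under the specialization above, the genericity hypothesis of Corollary \ref{u2}~(i) is automatic, since $\pi_{\Lambda_1}$ and $\rho$ are both the trivial representation of the trivial group. The combinatorial condition "there is $\widetilde{\Lambda_1'}\in\{\Lambda_1',\Lambda_1^{\prime t}\}$ with $(\Lambda,\widetilde{\Lambda_1'})\in\mathcal{G}$" becomes precisely "there is $\widetilde{\Lambda'}\in\{\Lambda',\Lambda^{\prime t}\}$ with $(\Lambda,\widetilde{\Lambda'})\in\mathcal{G}$", which is exactly the condition in the statement of Corollary \ref{u4}~(i). The $(\psi,\ee)$-strongly relevant pair condition is identical in the two corollaries (it does not depend on the parametrization of the representations, cf.\ Remark \ref{ur}), so it transports unchanged. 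The multiplicity on the right-hand side of Corollary \ref{u2}~(i) is already $1$ under the specified conditions, so we conclude $m_\psi(\pl,\pi_{\Lambda'})=1$ in exactly the stated regime and $0$ otherwise.

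For part (ii), the argument is symmetric. A unipotent representation $\pl$ of $\sp_{2m}\fq$ arises from $\pi_{\rho,\Lambda_1,\Lambda_1'}$ by taking $\rho$ trivial, $\Lambda_1=\Lambda$, and $\Lambda_1'$ trivial. Substituting into Corollary \ref{u2}~(ii), the genericity of $\pi_{\Lambda_1'}$ and $\rho$ is again automatic (both are the trivial representation of the trivial group), the combinatorial condition "there is $\widetilde{\Lambda'}\in\{\Lambda',\Lambda^{\prime t}\}$ with $(\Lambda_1,\widetilde{\Lambda'})\in\mathcal{G}$" reduces to the stated one (with $\Lambda_1$ being what the outer $\Lambda$ of the unipotent representation is called in that corollary), and the $(\psi,\ee)$-strongly relevant condition transports verbatim. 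Hence the multiplicity is $1$ when the stated conditions hold and $0$ otherwise.

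The only point that needs a moment of care is the interplay of notation: in Corollary \ref{u2} the symbols $\Lambda_1,\Lambda_1'$ belong to the second representation, whereas in Corollary \ref{u4} the names $\Lambda$ and $\Lambda'$ are re-assigned to the sole non-trivial symbol of whichever representation carries one. Once one matches notation, the proof is essentially a bookkeeping exercise and no new representation-theoretic input beyond Theorem \ref{u1} (via Corollary \ref{u2}) is required; the main ingredient, the multiplicity formula of Theorem \ref{u1}, has already been established in \cite{Wang1}. Thus the hard work lies upstream, in Theorem \ref{u1}; Corollary \ref{u4} is its specialization to the Case (B) pair consisting of one unipotent and one $\theta$-representation.
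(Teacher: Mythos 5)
Your proposal is correct and matches the paper's (implicit) treatment: the paper states Corollary \ref{u4} without proof, as an immediate specialization of Theorem \ref{u1} via Corollary \ref{u2}, obtained by setting the non-$\theta$ (resp.\ non-unipotent) components to be trivial so that the genericity hypotheses hold vacuously. Your notational bookkeeping between the two corollaries is accurate, so nothing further is needed.
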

Recall that
\[
\cal{D}^{\rm{FJ}}_\psi(\pi)= \cal{D}^{\rm{FJ}}_{\psi'}(\pi)
\]
if  $\psi'$ is in the square class of $\psi$.
Since our aim of this subsection is to calculate the largest descent sequence of $\pl$, from now on, when we consider the descent of $\pl$, we always focus on this kind of descent $\CD^\rm{FJ}_{\ell_0, \psi}(\pl)$ with respect to $\psi$ such that the first occurrence index $\ell_{0,\psi}^{\rm{FJ}}(\pl)\ge\ell_{0,\psi'}^{\rm{FJ}}(\pl) $ for any character $ \psi'$ of $\Fq$.
Now assume that the character $\psi$ have above property and $\psi'$ is a character not in the square class of $\psi$. By above multiplicity results in the Gan-Gross-Prasad problem, for unipotent representation,
\[
\ell_{0,\psi}^{\rm{FJ}}(\pl)=\ell_{0,\psi'}^{\rm{FJ}}(\pl),
\]
and for $\theta$-representation,
\[
\ell_{0,\psi}^{\rm{FJ}}(\pl)>\ell_{0,\psi'}^{\rm{FJ}}(\pl).
\]
According to Remark \ref{ur}, Corollary \ref{u3} and Corollary \ref{u4}, an irreducible component of the descent $\cal{D}^{\rm{FJ}}_\psi(\pl)$ of an irreducible unipotent (resp. $\theta$) representation $\pl\in\rm{Irr}(\sp_{2n}\fq)$ is an irreducible $\theta$ (resp. unipotent) representation $\pi_{\Lambda'}\in \rm{Irr}(\sp_{2m}\fq)$ and the first occurrence descent index $\ell_{0,\psi}^{\rm{FJ}}(\pl)$ only depends on the set $\{\Lambda,\Lambda^t\}$, not on the symbol $\Lambda$ and the choice of $\psi$ with above property. Moreover, the set $\{\Lambda',\Lambda^{\prime t}\}$ also only depends on the set $\{\Lambda,\Lambda^t\}$, not on the symbol $\Lambda$ and $\psi$. Furthermore, consider a largest descent sequence of $\pi$:
  \[
\pi_1=\pl\xrightarrow{\ell_1}\pi_2\xrightarrow{\ell_2}\pi_{3}\xrightarrow{\ell_3}\cdots\xrightarrow{\ell_k}\bf{1}.
\]
such that in each step, the character $\psi_i$ in the descent $\CD^\rm{FJ}_{\ell_0, \psi_i}(\pi_i)$ has above property. Note that $\psi_i$ may be different. We observed that
the descent sequence index $(2\ell_i)$ only depends on the set $\{\Lambda,\Lambda^t\}$. We will only care about $\ell_i$. And they only depends on the set $\{\Lambda,\Lambda^t\}$ not $\psi_i$. So from now on, we write $\cal{D}^{\rm{FJ}}$ instead of $\cal{D}^{\rm{FJ}}_\psi$.

By above multiplicity results, we can describe the descent of $\pl$ by the symbol $\Lambda$ as follow:
For $\rm{def}(\Lambda)>0,$ we set
 \begin{itemize}
\item $\mathcal{DG}^{+}_{\rm{un}}(\Lambda):=\Lambda'$ such that $\Upsilon(\Lambda')_*=\Upsilon(\Lambda)^{*-1}$, $\Upsilon(\Lambda')^*=\Upsilon(\Lambda)_*$ and $\rm{def}(\Lambda')=-\rm{def}(\Lambda)-1$;
\item $\mathcal{DG}^{-}_{\rm{un}}(\Lambda):=\Lambda'$ such that $\Upsilon(\Lambda')_*=\Upsilon(\Lambda)^{-1}_*$, $\Upsilon(\Lambda')^*=\Upsilon(\Lambda)^*$ and $\rm{def}(\Lambda')=\rm{def}(\Lambda)-1$;
  \end{itemize}
For $\rm{def}(\Lambda)<0,$ we set
 \begin{itemize}
 \item  $\mathcal{DG}^{+}_{\rm{un}}(\Lambda):=\Lambda'$ such that $\Upsilon(\Lambda')^*= \Upsilon(\Lambda)^{-1}_*$, $\Upsilon(\Lambda')_*=\Upsilon(\Lambda)^*$ and $\rm{def}(\Lambda')=-\rm{def}(\Lambda)+1$.
\item $\mathcal{DG}^{-}_{\rm{un}}(\Lambda):=\Lambda'$ such that $\Upsilon(\Lambda')^*= \Upsilon(\Lambda)^{*-1}$, $\Upsilon(\Lambda')_*=\Upsilon(\Lambda)_*$ and $\rm{def}(\Lambda')=\rm{def}(\Lambda)+1$;
\end{itemize}
For $\rm{def}(\Lambda')\ge0,$ we set
 \begin{itemize}

\item $\widetilde{\mathcal{DG}^{+}_{\theta}}(\Lambda'):=\Lambda$ such that $\Upsilon(\Lambda)_*=\Upsilon(\Lambda')_*$, $\Upsilon(\Lambda)^*=\Upsilon(\Lambda')^{*-1}$ and $\rm{def}(\Lambda)=\rm{def}(\Lambda')+1$;
\item $\widetilde{\mathcal{DG}^{-}_{\theta}}(\Lambda'):=\Lambda$ such that $\Upsilon(\Lambda)_*=\Upsilon(\Lambda')^*$, $\Upsilon(\Lambda)^*=\Upsilon(\Lambda')^{-1}_*$ and $\rm{def}(\Lambda)=-\rm{def}(\Lambda')+1$;
  \end{itemize}
    For $\rm{def}(\Lambda')<0,$ we set
 \begin{itemize}
 \item  $\widetilde{\mathcal{DG}^{+}_{\theta}}(\Lambda'):=\Lambda$ such that $\Upsilon(\Lambda)^*=\Upsilon(\Lambda')^*$, $\Upsilon(\Lambda)_*=\Upsilon(\Lambda')^{-1}_*$ and $\rm{def}(\Lambda)=\rm{def}(\Lambda')-1$.
\item $\widetilde{\mathcal{DG}^{-}_{\theta}}(\Lambda'):=\Lambda$ such that $\Upsilon(\Lambda)^*=\Upsilon(\Lambda')_*$, $\Upsilon(\Lambda)_*=\Upsilon(\Lambda')^{*-1}$ and $\rm{def}(\Lambda)=-\rm{def}(\Lambda')-1$;
\end{itemize}
Note that if $\Lambda'\in\bigcup_n\cal{S}_n^\pm$, then there is unique $\Lambda^{\prime \epsilon}\in\{\Lambda^{\prime },\Lambda^{\prime t}\}$ such that $\widetilde{\mathcal{DG}^{\epsilon}_{\theta}}(\Lambda^{\prime \epsilon})\in\bigcup_n\cal{S}_n$. In fact, the existence of $\Lambda^{\prime \epsilon}$ is easy to check, and for the uniqueness, if $\widetilde{\mathcal{DG}^{\epsilon}_{\theta}}(\Lambda^{\prime \epsilon})\in\bigcup_n\cal{S}_n$, then
$\widetilde{\mathcal{DG}^{\epsilon}_{\theta}}((\Lambda^{\prime \epsilon})^t)=\left(\widetilde{\mathcal{DG}^{\epsilon}_{\theta}}(\Lambda^{\prime \epsilon})\right)^t$. So
$\rm{def}\left(\widetilde{\mathcal{DG}^{\epsilon}_{\theta}}((\Lambda^{\prime \epsilon})^t)\right)=3\textrm{ mod }4$ and $\widetilde{\mathcal{DG}^{\epsilon}_{\theta}}((\Lambda^{\prime \epsilon})^t)\notin\bigcup_n\cal{S}_n$.
For $\Lambda'\in\bigcup_n\cal{S}_n^\pm$, we set
\[
\mathcal{DG}^{\epsilon}_{\theta}(\Lambda')=\widetilde{\mathcal{DG}^{\epsilon}_{\theta}}(\Lambda^{\prime \epsilon}).
\]

\begin{theorem}[descent]\label{u5}

(i) Let $\pi_{\Lambda}\in\rm{Irr}(\sp_{2n}\fq)$ be an irreducible unipotent representation and $k=\frac{|\rm{def}(\Lambda)|-1}{2}$. Then
\[
\cal{D}^{\rm{FJ}}(\pi_\Lambda):=\left\{
\begin{array}{ll}
\pi_{\Lambda^\epsilon} &\textrm{ if }\rm{rank}(\cal{DG}_{\rm{un}}^\epsilon(\Lambda))<\rm{rank}(\cal{DG}_{un}^{-\epsilon}(\Lambda));\\
\pi_{\Lambda^+}+\pi_{\Lambda^-}&\textrm{ if }\rm{rank}(\cal{DG}_{\rm{un}}^\epsilon(\Lambda))=\rm{rank}(\cal{DG}_{\rm{un}}^{-\epsilon}(\Lambda)),
\end{array}\right.
\]
where $\pi_{\Lambda^\epsilon}$ is an irreducible $\theta$-representation and $\Lambda^\epsilon\in\{\cal{DG}_{\rm{un}}^\epsilon(\Lambda),(\cal{DG}_{\rm{un}}^\epsilon(\Lambda))^t\}$
such that $(\pl,\pi_{\Lambda^\epsilon})$ is $(\psi,\ee)$-strongly relevant.
And
\[
\ell^{\rm{FJ}}_0(\pi_\Lambda):=\left\{
\begin{array}{ll}
\left(\Upsilon(\Lambda\right)^{*})^t_1-(k+1) &\textrm{ if }\rm{def}(\Lambda)>0\textrm{ and }\rm{rank}(\cal{DG}_{\rm{un}}^+(\Lambda))\le\rm{rank}(\cal{DG}_{\rm{un}}^{-}(\Lambda));\\
\left(\Upsilon(\Lambda\right)_{*})^t_1+k &\textrm{ if }\rm{def}(\Lambda)>0\textrm{ and }\rm{rank}(\cal{DG}_{\rm{un}}^+(\Lambda))>\rm{rank}(\cal{DG}_{\rm{un}}^{-}(\Lambda));\\
\left(\Upsilon(\Lambda\right)_{*})^t_1-(k+1) &\textrm{ if }\rm{def}(\Lambda)<0\textrm{ and }\rm{rank}(\cal{DG}_{\rm{un}}^+(\Lambda))\le\rm{rank}(\cal{DG}_{\rm{un}}^{-}(\Lambda));\\
\left(\Upsilon(\Lambda\right)^{*})^t_1+k &\textrm{ if }\rm{def}(\Lambda)<0\textrm{ and }\rm{rank}(\cal{DG}_{\rm{un}}^+(\Lambda))>\rm{rank}(\cal{DG}_{\rm{un}}^{-}(\Lambda)).\\
\end{array}\right.
\]
If $\rm{rank}(\cal{DG}_{A}^+(\Lambda))=\rm{rank}(\cal{DG}_{A}^{-}(\Lambda))$, then
\[
\left\{
\begin{array}{ll}
\left(\Upsilon(\Lambda\right)^{*})^t_1-(k+1) =\left(\Upsilon(\Lambda\right)_{*})^t_1+k &\textrm{ if }\rm{def}(\Lambda)>0;\\
\left(\Upsilon(\Lambda\right)_{*})^t_1-(k+1)=\left(\Upsilon(\Lambda\right)^{*})^t_1+k &\textrm{ if }\rm{def}(\Lambda)<0.
\end{array}\right.
\]

(ii) Let $\pi_{\Lambda'}\in\rm{Irr}(\sp_{2n}\fq)$ be an irreducible $\theta$-representation and $k=\frac{|\rm{def}(\Lambda')|}{2}$. Then
\[
\cal{D}^{\rm{FJ}}(\pi_{\Lambda'}):=\left\{
\begin{array}{ll}
\pi_{\cal{DG}_{\theta}^\epsilon(\Lambda')} &\textrm{ if }\rm{rank}(\cal{DG}_{\theta}^\epsilon(\Lambda'))<\rm{rank}(\cal{DG}_{\theta}^{-\epsilon}(\Lambda'));\\
\pi_{\cal{DG}_{\theta}^+(\Lambda')}+\pi_{\cal{DG}_{\theta}^-(\Lambda')}&\textrm{ if }\rm{rank}(\cal{DG}_{\theta}^\epsilon(\Lambda'))=\rm{rank}(\cal{DG}_{\theta}^{-\epsilon}(\Lambda')),
\end{array}\right.
\]
where $\pi_{\cal{DG}_{\theta}^\epsilon(\Lambda')} $ is an irreducible unipotent representation
and
\[
\ell^{\rm{FJ}}_0(\pi_{\Lambda'}):=\left\{
\begin{array}{ll}
\left(\Upsilon(\Lambda'\right)^{*})^t_1-k &\textrm{ if }\rm{def}(\Lambda')\ge0\textrm{ and }\rm{rank}(\cal{DG}_{\theta}^+(\Lambda'))\le\rm{rank}(\cal{DG}_{\theta}^{-}(\Lambda'));\\
\left(\Upsilon(\Lambda'\right)_{*})^t_1+k &\textrm{ if }\rm{def}(\Lambda')\ge0\textrm{ and }\rm{rank}(\cal{DG}_{\theta}^+(\Lambda'))>\rm{rank}(\cal{DG}_{\theta}^{-}(\Lambda'));\\
\left(\Upsilon(\Lambda'\right)_{*})^t_1-k &\textrm{ if }\rm{def}(\Lambda')<0\textrm{ and }\rm{rank}(\cal{DG}_{\theta}^+(\Lambda'))\le\rm{rank}(\cal{DG}_{\theta}^{-}(\Lambda'));\\
\left(\Upsilon(\Lambda'\right)^{*})^t_1+k &\textrm{ if }\rm{def}(\Lambda')<0\textrm{ and }\rm{rank}(\cal{DG}_{\theta}^+(\Lambda'))>\rm{rank}(\cal{DG}_{\theta}^{-}(\Lambda')).\\
\end{array}\right.
\]
If $\rm{rank}(\cal{DG}_{\theta}^+(\Lambda'))=\rm{rank}(\cal{DG}_{\theta}^{-}(\Lambda'))$, then
\[
\left\{
\begin{array}{ll}
\left(\Upsilon(\Lambda\right)^{*})^t_1-k =\left(\Upsilon(\Lambda\right)_{*})^t_1+k &\textrm{ if }\rm{def}(\Lambda)>0;\\
\left(\Upsilon(\Lambda\right)_{*})^t_1-k=\left(\Upsilon(\Lambda\right)^{*})^t_1+k &\textrm{ if }\rm{def}(\Lambda)<0.
\end{array}\right.
\]
\end{theorem}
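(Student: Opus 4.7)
The plan is to derive both the structure of the descent and the first occurrence index from the multiplicity formula in Corollary~\ref{u4} together with the definitions of the sets $\mathcal{G}^{\rm{even},\pm}$ and $\mathcal{G}^{\rm{odd},\pm}$. I will spell out part (i); part (ii) is parallel with the roles of unipotent and $\theta$-representation exchanged. Fix $\pi_\Lambda$ unipotent. By Corollary~\ref{u3}, every irreducible component of $\cal{D}^{\rm{FJ}}(\pi_\Lambda)$ is an irreducible $\theta$-representation $\pi_{\Lambda'}$ of some $\sp_{2m}\fq$, and by Corollary~\ref{u4}(i) the multiplicity $m_\psi(\pi_\Lambda,\pi_{\Lambda'})$ equals $1$ precisely when there is $\widetilde{\Lambda'}\in\{\Lambda',\Lambda^{\prime t}\}$ with $(\Lambda,\widetilde{\Lambda'})\in\mathcal{G}$ and the pair is $(\psi,\ee)$-strongly relevant. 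Consequently $\ell^{\rm{FJ}}_0(\pi_\Lambda)=n-\min_{\Lambda'}\rm{rank}(\Lambda')$ where $\Lambda'$ ranges over symbols admitting such a $\widetilde{\Lambda'}$, and the components of the descent are exactly the $\pi_{\Lambda'}$ realizing this minimum, each with multiplicity one.

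Next I would translate the four classes into defect constraints. A unipotent symbol satisfies $\rm{def}(\Lambda)\equiv 1\pmod 4$, so only two of the four classes are available: for $\rm{def}(\Lambda)>0$ they are $\mathcal{G}^{\rm{even},+}$ and $\mathcal{G}^{\rm{even},-}$, pinning $\rm{def}(\Lambda')=\rm{def}(\Lambda)-1$ or $-\rm{def}(\Lambda)-1$ respectively; for $\rm{def}(\Lambda)<0$ they are $\mathcal{G}^{\rm{odd},-}$ and $\mathcal{G}^{\rm{odd},+}$. Within each class, the $\preccurlyeq$ conditions $\Upsilon(\Lambda')_X\preccurlyeq\Upsilon(\Lambda)_Y$ allow one row of $\Upsilon(\Lambda')$ to coincide with a row of $\Upsilon(\Lambda)$ while the other row has each of its parts lowered by at most one. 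Minimizing $\rm{rank}(\Lambda')$ amounts to saturating these bounds: one row is taken to equal the corresponding row of $\Upsilon(\Lambda)$, and the other is taken to be $\lambda^{-1}$, obtained by removing the first column. A direct comparison shows that the two minimizers so produced are exactly the symbols $\cal{DG}^-_{\rm{un}}(\Lambda)$ (in the $\rm{even},+$ / $\rm{odd},-$ case) and $\cal{DG}^+_{\rm{un}}(\Lambda)$ (in the $\rm{even},-$ / $\rm{odd},+$ case) defined above the theorem; legitimacy as a symbol (strictly decreasing rows) follows because the $\preccurlyeq$ translates, after $\Upsilon$, into a combinatorially valid operation on partitions.

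The descent is then governed by the smaller of $\rm{rank}(\cal{DG}^\pm_{\rm{un}}(\Lambda))$: when one is strictly smaller, the descent is the single $\theta$-representation $\pi_{\Lambda^\epsilon}$, where the representative in $\{\cal{DG}^\epsilon_{\rm{un}}(\Lambda),(\cal{DG}^\epsilon_{\rm{un}}(\Lambda))^t\}$ is forced by Remark~\ref{ur} so that the pair is $(\psi,\ee)$-strongly relevant; when the two ranks coincide, both classes contribute and the descent is $\pi_{\Lambda^+}+\pi_{\Lambda^-}$. To obtain the closed form for $\ell_0$ I would use the elementary identity $|\lambda|-|\lambda^{-1}|=\lambda^t_1$ on the saturated row, combined with a rank formula of the shape $\rm{rank}(\Lambda)=|\Upsilon(\Lambda)^*|+|\Upsilon(\Lambda)_*|+c(\rm{def}(\Lambda))$ where the defect-dependent constant $c$ produces the shift by $k$ or $k+1$ with $k=(|\rm{def}(\Lambda)|-1)/2$. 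The four sign-subcases in the statement correspond to the four combinations of $\rm{sgn}(\rm{def}(\Lambda))$ with $\epsilon\in\{\pm\}$. In the equality case, equating the two formulas produces the identity $(\Upsilon(\Lambda)^*)^t_1-(\Upsilon(\Lambda)_*)^t_1=\rm{def}(\Lambda)$, which is the stated collapse.

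Part~(ii) is handled analogously, replacing Corollary~\ref{u4}(i) by Corollary~\ref{u4}(ii) and using $\cal{DG}^\pm_\theta$ in place of $\cal{DG}^\pm_{\rm{un}}$. Here the first step $\widetilde{\cal{DG}^\epsilon_\theta}(\Lambda^{\prime\epsilon})$ is needed precisely because the raw formulas can land on $\rm{def}\equiv 3\pmod 4$, outside $\bigcup_n\cal{S}_n$, and the transpose selection on $\Lambda'$ enforces the unipotent parity; the uniqueness argument given above $\cal{DG}^\epsilon_\theta$ shows the selection is well-defined. The main obstacle I anticipate is the bookkeeping of the eight sign/parity cases together with the transpose ambiguity $\{\Lambda',\Lambda^{\prime t}\}$: verifying in each case that the saturated bi-partition is actually realized by a symbol, that the strong relevance condition selects the correct transpose, and that the rank formula collapses to the clean expression in terms of $(\Upsilon(\Lambda)^*)^t_1$ or $(\Upsilon(\Lambda)_*)^t_1$ shifted by $k$ or $k+1$ as claimed.
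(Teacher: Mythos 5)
Your proposal takes essentially the same route as the paper's (very terse) proof: reduce via Corollaries \ref{u3} and \ref{u4} to minimizing the rank of an admissible symbol $\Lambda'$, identify the two minimizers with $\cal{DG}^{\pm}_{\rm{un}}(\Lambda)$ (resp.\ $\cal{DG}^{\pm}_{\theta}(\Lambda')$), handle the transpose ambiguity by strong relevance as in Remark \ref{ur}, and compare ranks. Your derivation of the closed form for $\ell_0$ from $|\lambda|-|\lambda^{-1}|=\lambda^t_1$ together with the defect-dependent constant in the rank formula is correct and simply fills in what the paper dismisses as ``straightforward.''
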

\begin{proof}

For (i), by Corollary \ref{u3}, we know that $\cal{D}^{\rm{FJ}}(\pi_{\Lambda})$ is a $\theta$-representation (it may not be irreducible). All that remains is to pick certain irreducible $\theta$-representations, or equivalently, certain symbols $\Lambda'$ which satisfies the conditions in Theorem \ref{u4} with the smallest rank. By directly calculation, the symbol is either $\cal{DG}_{\rm{un}}^+(\Lambda)$ or $\cal{DG}_{\rm{un}}^-(\Lambda)$. Then we pick the symbol $\Lambda'$ by comparing their ranks. The calculation of $\ell^{\rm{FJ}}_0(\pi_\Lambda)$ is straightforward.

 For (ii), one should note that  $\rm{rank}(\mathcal{DG}^{\epsilon}_{\theta}(\Lambda'))=\rm{rank}(\widetilde{\mathcal{DG}^{\epsilon}_{\theta}}(\Lambda'))=\rm{rank}(\widetilde{\mathcal{DG}^{\epsilon}_{\theta}}(\Lambda^{\prime t}))$ and $\ell^{\rm{FJ}}_0(\pi_{\Lambda'})=\ell^{\rm{FJ}}_0(\pi_{\Lambda^{\prime t}})$. The rest of the proof of (ii) is similar to (i).
\end{proof}

In Example \ref{ue}, we known that a series of descents may not produce a partition. But in most cases, we will get a partition by descent method, and even if not, the array that we get by descent method is close to certain partition.

\begin{proposition}\label{u5.5}
(i) Let $\pi_{\Lambda}\in\rm{Irr}(\sp_{2n}\fq)$ be an irreducible unipotent representation and $k=\frac{|\rm{def}(\Lambda)|-1}{2}$. Assume that $\pi_{\Lambda'}\in \rm{Irr}(\sp_{2n'}\fq)$ is an irreducible $\theta$-representation such that $\pi_{\Lambda}$ appears in $\cal{D}^{\rm{FJ}}(\pi_{\Lambda'})$. Then $\ell^{\rm{FJ}}_0(\pi_\Lambda)\le \ell^{\rm{FJ}}_0(\pi_{\Lambda'})$ with equality hold if and only if $\rm{rank}(\cal{DG}_{\theta}^\epsilon(\Lambda'))=\rm{rank}(\cal{DG}_{\theta}^{-\epsilon}(\Lambda'))$. In other words, the equality holds if and only if
\begin{equation}\label{equ}
\left\{
\begin{array}{ll}
\left(\Upsilon(\Lambda'\right)^{*})^t_1-\frac{|\rm{def}(\Lambda)|-1}{2}= \left(\Upsilon(\Lambda'\right)_{*})^t_1+\frac{|\rm{def}(\Lambda)|-1}{2} &\textrm{ if } \rm{def}(\Lambda)\ge 0;\\
\left(\Upsilon(\Lambda'\right)_{*})^t_1-\frac{|\rm{def}(\Lambda)|-1}{2}= \left(\Upsilon(\Lambda'\right)^{*})^t_1+\frac{|\rm{def}(\Lambda)|-1}{2}&\textrm{ if } \rm{def}(\Lambda)< 0.
\end{array}\right.
\end{equation}

(ii) Let $\pi_{\Lambda'}\in\rm{Irr}(\sp_{2n}\fq)$ be an irreducible $\theta$-representation. Assume that $\pi_{\Lambda}\in \rm{Irr}(\sp_{2n'}\fq)$ is an irreducible unipotent representation such that $\pi_{\Lambda'}$ appears in $\cal{D}^{\rm{FJ}}(\pi_{\Lambda})$. Then $\ell^{\rm{FJ}}_0(\pi_{\Lambda'})\le \ell^{\rm{FJ}}_0(\pi_{\Lambda})+1$ with equality holds if and only if $\rm{rank}(\cal{DG}_{\rm{un}}^\epsilon(\Lambda))=\rm{rank}(\cal{DG}_{\rm{un}}^{-\epsilon}(\Lambda))$. In particular, we have
\[
\Lambda=\begin{pmatrix}
a_1,a_2,\cdots,a_{m+2k+1}\\
b_1,b_2,\cdots,b_{m}
\end{pmatrix},
\]
where $a_{m+2k+1}>0$ and $b_m>0$.

\end{proposition}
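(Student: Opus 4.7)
The plan is to reduce both claims to direct substitution into the explicit formulas of Theorem~\ref{u5}. The operations $\mathcal{DG}^\epsilon_\mathrm{un}$ and $\mathcal{DG}^\epsilon_\theta$ are defined by termwise manipulations of the two rows of the associated bi-partition, so once the relation between $\Lambda$ and $\Lambda'$ is fixed, the rows $\Upsilon(\Lambda)^*$ and $\Upsilon(\Lambda)_*$ of the descent symbol are determined explicitly from those of $\Lambda'$ (or vice versa), up to a possible transpose. The only combinatorial identity I will need is $(\mu^{-1})^t_i = \mu^t_{i+1}$, which records that removing the first column of a Young diagram shifts the column-height sequence by one.

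For part (i), Theorem~\ref{u5}(ii) forces $\Lambda \in \{\mathcal{DG}^+_\theta(\Lambda'),\,\mathcal{DG}^-_\theta(\Lambda')\}$. Take for instance $\mathrm{def}(\Lambda')\ge 0$ and $\Lambda = \mathcal{DG}^+_\theta(\Lambda')$; then unwinding the definition gives $\Upsilon(\Lambda)^* = \Upsilon(\Lambda')^{*-1}$, $\Upsilon(\Lambda)_* = \Upsilon(\Lambda')_*$ and $\mathrm{def}(\Lambda) = \mathrm{def}(\Lambda')+1 > 0$, so in particular $k := (|\mathrm{def}(\Lambda)|-1)/2$ equals $|\mathrm{def}(\Lambda')|/2$. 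Substituting into the formula of Theorem~\ref{u5}(i) for $\ell^\mathrm{FJ}_0(\pi_\Lambda)$ yields either $(\Upsilon(\Lambda')^*)^t_2-(k+1)$ or $(\Upsilon(\Lambda')_*)^t_1+k$ according to which of $\mathrm{rank}(\mathcal{DG}^\pm_\mathrm{un}(\Lambda))$ is smaller. Comparing with $\ell^\mathrm{FJ}_0(\pi_{\Lambda'}) = (\Upsilon(\Lambda')^*)^t_1 - k$, the first sub-case gives $\ell^\mathrm{FJ}_0(\pi_\Lambda) < \ell^\mathrm{FJ}_0(\pi_{\Lambda'})$ by the trivial estimate $(\Upsilon(\Lambda')^*)^t_2 \le (\Upsilon(\Lambda')^*)^t_1$, while in the second sub-case $\ell^\mathrm{FJ}_0(\pi_\Lambda) \le \ell^\mathrm{FJ}_0(\pi_{\Lambda'})$ holds with equality exactly when $(\Upsilon(\Lambda')^*)^t_1 - k = (\Upsilon(\Lambda')_*)^t_1 + k$, which by the closing clause of Theorem~\ref{u5}(ii) is equivalent to $\mathrm{rank}(\mathcal{DG}^+_\theta(\Lambda')) = \mathrm{rank}(\mathcal{DG}^-_\theta(\Lambda'))$; this is precisely condition (\ref{equ}). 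The three remaining sub-cases (the other choice of $\epsilon$ and/or $\mathrm{def}(\Lambda')<0$) are handled identically after swapping the roles of $\Upsilon(\Lambda')^*$ and $\Upsilon(\Lambda')_*$ and passing through the auxiliary $\Lambda'\leftrightarrow\Lambda^{\prime t}$ choice built into the definition of $\mathcal{DG}^\epsilon_\theta$.

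Part (ii) follows from the same template with the roles of the two halves of Theorem~\ref{u5} reversed: now $\Lambda' = \mathcal{DG}^\epsilon_\mathrm{un}(\Lambda)$. The genuinely new feature is the $+1$ appearing in $\ell^\mathrm{FJ}_0(\pi_{\Lambda'}) \le \ell^\mathrm{FJ}_0(\pi_\Lambda)+1$, and it comes from the one-unit mismatch between $k = (|\mathrm{def}(\Lambda)|-1)/2$ on the unipotent side and $k' = |\mathrm{def}(\Lambda')|/2 = k$ on the $\theta$ side: the corresponding ``$-(k+1)$'' and ``$-k'$'' terms in the two formulas differ by exactly one, which is what allows the descent sequence to rise by one step at a unipotent-to-$\theta$ descent, as already manifest in Example~\ref{ue}. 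The structural conclusion that $a_{m+2k+1}>0$ and $b_m>0$ in the equality case drops out of the rank equality analysis: if one of the two rows of $\Lambda$ ended in $0$, the equivalence relation on symbols would allow us to shorten it, and a direct computation shows that $\mathrm{rank}(\mathcal{DG}^+_\mathrm{un}(\Lambda))$ and $\mathrm{rank}(\mathcal{DG}^-_\mathrm{un}(\Lambda))$ then separate, forcing the inequality to be strict.

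The main difficulty is purely organisational: between the two possible signs of $\mathrm{def}(\Lambda')$ (or $\mathrm{def}(\Lambda)$), the two choices of $\epsilon$, and the ordering of the two $\mathcal{DG}^\pm$-ranks, one has to tabulate roughly eight subcases, each of which collapses to the same identity $(\mu^{-1})^t_i = \mu^t_{i+1}$. Tracking which row of $\Upsilon$ each formula refers to through the $\Lambda \leftrightarrow \Lambda^t$ trick embedded in $\mathcal{DG}^\epsilon_\theta$ is tedious but poses no conceptual obstacle.
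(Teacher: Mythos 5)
Your proposal is correct and follows essentially the same route as the paper: both arguments substitute into the explicit formulas of Theorem~\ref{u5}, observe that $\ell^{\rm{FJ}}_0(\pi_\Lambda)$ is the maximum of two candidate expressions, use the column-shift identity $(\mu^{-1})^t_i=\mu^t_{i+1}$ to rewrite them in terms of $\Lambda'$, and run the same case analysis over the sign of the defect and the choice of $\epsilon$ (the paper organizes the cases by the four possible values of $\rm{def}(\Lambda)$ relative to $\rm{def}(\Lambda')$, which is just a relabelling of your eight subcases). The only imprecision is in your justification of the final structural claim in (ii): the symbol equivalence only permits shortening when \emph{both} rows end in $0$, so the conclusion $a_{m+2k+1}>0$ and $b_m>0$ should be read off directly from the rank-equality identity $(\Upsilon(\Lambda)^*)^t_1-(k+1)=(\Upsilon(\Lambda)_*)^t_1+k$ rather than from a shortening argument, but this is a local fix and does not affect the proof.
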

\begin{proof}
We will only prove the (i). The proof of the case (ii) is similar.

Note that  $\mathcal{DG}^{\epsilon}_{\theta}(\Lambda')=\mathcal{DG}^{\epsilon}_{\theta}(\Lambda^{\prime t})$ and $\rm{def}(\Lambda')=-\rm{def}(\Lambda^{\prime t})$. We only prove this proposition for $\rm{def}(\Lambda')\ge0$. By Corollary \ref{u4} and Theorem \ref{u5}, there are four possibilities:
 \begin{itemize}
 \item $\rm{def}(\Lambda)=\rm{def}(\Lambda')+1$;
 \item $\rm{def}(\Lambda)=\rm{def}(\Lambda')-1$;
\item $\rm{def}(\Lambda)=-\rm{def}(\Lambda')+1$;
 \item $\rm{def}(\Lambda)=-\rm{def}(\Lambda')-1$.
\end{itemize}
And $\mathcal{DG}^{\epsilon}_{\theta}(\Lambda')=\widetilde{\mathcal{DG}^{\epsilon}_{\theta}}(\Lambda^{\prime })$ for the first case and for the second case with $\rm{def}(\Lambda')>0$ while $\mathcal{DG}^{\epsilon}_{\theta}(\Lambda')=\widetilde{\mathcal{DG}^{\epsilon}_{\theta}}(\Lambda^{\prime t})$ for the last two and for the second case with $\rm{def}(\Lambda')=0$. We will only prove the first two cases.

In the first case, we have $\rm{def}(\Lambda)=\rm{def}(\Lambda')+1$ and $\frac{|\rm{def}(\Lambda')|}{2}=k$. Since $\pi_{\Lambda}$ appears in $\cal{D}^{\rm{FJ}}(\pi_{ \Lambda'})$, by Theorem \ref{u5} (ii), we know that $\pl=\pi_{\cal{DG}_{\theta}^+(\Lambda')}$ and $\rm{rank}(\cal{DG}_{\theta}^+(\Lambda'))\le\rm{rank}(\cal{DG}_{\theta}^{-}(\Lambda'))$. It is equivalent to
\[
\ell^{\rm{FJ}}_0(\pi_{ \Lambda'})=\left(\Upsilon(\Lambda'\right)^{*})^t_1-k\ge \left(\Upsilon(\Lambda'\right)_{*})^t_1+k
\]
and $\Upsilon(\Lambda)_*=\Upsilon(\Lambda')_*$, $\Upsilon(\Lambda)^*=\Upsilon(\Lambda')^{*-1}$. Then we have
\[
\left(\Upsilon(\Lambda\right)_{*})^t_1+k=\left(\Upsilon(\Lambda'\right)_{*})^t_1+k \le\ell^{\rm{FJ}}_0(\pi_{ \Lambda'})
\]
and
\[
\left(\Upsilon(\Lambda\right)^{*})^t_1-(k+1)=\left(\Upsilon(\Lambda'\right)^{*})^t_2-(k+1)<\left(\Upsilon(\Lambda'\right)^{*})^t_1-k =\ell^{\rm{FJ}}_0(\pi_{ \Lambda'}),
\]\
which implies that $\ell^{\rm{FJ}}_0(\pi_\Lambda)=\rm{max}\{\left(\Upsilon(\Lambda\right)_{*})^t_1+k,\left(\Upsilon(\Lambda\right)^{*})^t_1-(k+1)\}\le \ell^{\rm{FJ}}_0(\pi_{ \Lambda'})$ with equality hold if and only if $\left(\Upsilon(\Lambda'\right)^{*})^t_1-k= \left(\Upsilon(\Lambda'\right)_{*})^t_1+k$.

Assume that $\rm{def}(\Lambda)=\rm{def}(\Lambda')-1$. We have $\frac{|\rm{def}(\Lambda')|}{2}=k+1$ with $k\ge-1$. We only prove for $k\ge 0$, and the $k=-1$ case is similar. With same argument, we know that $\pl=\pi_{\cal{DG}_{\theta}^-(\Lambda')}$ and $\rm{rank}(\cal{DG}_{\theta}^-(\Lambda'))\le\rm{rank}(\cal{DG}_{\theta}^{+}(\Lambda'))$. It is equivalent to
\[
\ell^{\rm{FJ}}_0(\pi_{ \Lambda'})=\left(\Upsilon(\Lambda'\right)_{*})^t_1+(k+1)\ge \left(\Upsilon(\Lambda'\right)^{*})^t_1-(k+1)
\]
and $\Upsilon(\Lambda)_*=\Upsilon(\Lambda')_{*}^{-1}$, $\Upsilon(\Lambda)^*=\Upsilon(\Lambda')^*$. Then we have
\[
\left(\Upsilon(\Lambda\right)_{*})^t_1+k=\left(\Upsilon(\Lambda'\right)_{*})^t_2+k<\left(\Upsilon(\Lambda'\right)_{*})^t_1+(k+1) =\ell^{\rm{FJ}}_0(\pi_{ \Lambda'})
\]
and
\[
\left(\Upsilon(\Lambda\right)^{*})^t_1-(k+1)=\left(\Upsilon(\Lambda'\right)^{*})^t_1-(k+1) \le\ell^{\rm{FJ}}_0(\pi_{ \Lambda'}),
\]\
which implies that $\ell^{\rm{FJ}}_1(\pi_\Lambda)=\rm{max}\{\left(\Upsilon(\Lambda\right)_{*})^t_1+k,\left(\Upsilon(\Lambda\right)^{*})^t_1-(k+1)\}\le \ell^{\rm{FJ}}_1(\pi_{ \Lambda'})$ with equality hold if and only if $\left(\Upsilon(\Lambda'\right)^{*})^t_1-(k+1)= \left(\Upsilon(\Lambda'\right)_{*})^t_1+(k+1)$.

\end{proof}

\begin{proposition}\label{u7}
Let
 \[
\pi=\pi_1\xrightarrow{\ell_1}\pi_2\xrightarrow{\ell_2}\pi_3\cdots\xrightarrow{\ell_m-1}\pi_m
\]
be descent sequence of $\pi$. If $\pi$ is an irreducible unipotent (resp. $\theta$-) representation, then $\ell_i\ge \ell_{j}$ for $\forall i$ and $j\ge i+2$.
\end{proposition}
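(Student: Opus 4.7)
\begin{sketch}
The plan is to reduce the claim to the single three-term inequality $\ell_1 \ge \ell_3$ and then establish that directly from the symbol descent formulas.

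First, by Corollary \ref{u3} the descent sequence alternates between unipotent and $\theta$-representations, so $\pi_i$ and $\pi_{i+2}$ are always of the same type; moreover $\pi_i$ is unipotent or a $\theta$-representation according to the parity of $i$. Since the tail $\pi_i \to \pi_{i+1} \to \cdots \to \pi_m$ is itself a descent sequence of $\pi_i$, the inequalities $\ell_i \ge \ell_j$ for all $j \ge i+2$ reduce to the base case $\ell_1 \ge \ell_3$ under both possible types of $\pi_1$. The two base cases are dual under the interchange of $\cal{DG}_{\rm{un}}^{\pm}$ with $\cal{DG}_\theta^{\pm}$, so I focus on $\pi_1$ unipotent.

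From Proposition \ref{u5.5}(i) applied to $\pi_2 \to \pi_3$ one has $\ell_3 \le \ell_2$, and from Proposition \ref{u5.5}(ii) applied to $\pi_1 \to \pi_2$ one has $\ell_2 \le \ell_1 + 1$; chaining gives only $\ell_3 \le \ell_1 + 1$, so the whole content of the proposition is to remove the $+1$. I would write $\Lambda,\Lambda',\Lambda''$ for the symbols of $\pi_1,\pi_2,\pi_3$, set
\[
A = (\Upsilon(\Lambda)^*)^t_1,\quad B = (\Upsilon(\Lambda)_*)^t_1,\quad A^{(j)} = (\Upsilon(\Lambda)^*)^t_j,\quad B^{(j)} = (\Upsilon(\Lambda)_*)^t_j,
\]
and $k = (|\rm{def}(\Lambda)|-1)/2$, and then use the explicit rules for $\cal{DG}_{\rm{un}}^{\pm}$ and $\cal{DG}_\theta^{\pm}$ from Section \ref{sec6.2} to express the analogous invariants $A_{\Lambda''}$, $B_{\Lambda''}$, $\rm{def}(\Lambda'')$ in terms of $A,B,A^{(j)},B^{(j)},\rm{def}(\Lambda)$. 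A direct case analysis split according to which of $\cal{DG}_{\rm{un}}^{\pm}$ realizes the first occurrence at step one (equivalently, whether $\ell_1$ comes from $A-(k+1)$ or from $B+k$) and which of $\cal{DG}_\theta^{\pm}$ realizes it at step two shows that in every configuration the two terms of the Theorem \ref{u5} formula for $\ell_3$ are termwise dominated by the two terms $A-(k+1)$ and $B+k$ of $\ell_1$. The dominations rely only on the trivial inequalities $A^{(j+1)} \le A^{(j)}$, $B^{(j+1)} \le B^{(j)}$ together with the side-constraint imposed by which option realizes $\ell_1$ (e.g. in Sub-case I, $A \ge B + 2k + 2$ forces $B + (k+1) \le A - (k+1)$).

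The main obstacle is not conceptual but combinatorial: one must bookkeep the sign of $\rm{def}(\Lambda)$, the two sign choices at each of the two descent steps (so four basic cases), the rank-equality subcases (which produce reducible descents and hence two possible $\pi_2$), and the transposition $\Lambda'' \leftrightarrow (\Lambda'')^t$ needed to arrange $\rm{def}(\Lambda'') \equiv 1 \pmod{4}$. The delicate sub-case is the one where Proposition \ref{u5.5}(ii) is at equality, so that $\rm{rank}(\cal{DG}_{\rm{un}}^{+}(\Lambda)) = \rm{rank}(\cal{DG}_{\rm{un}}^{-}(\Lambda))$ and $A-(k+1) = B+k$: this rigid identity forces the second descent to strictly shorten at least one of the top or bottom rows of $\Lambda'$, and a short verification shows that this is exactly what recovers $\ell_3 \le \ell_1$ from the weaker $\ell_3 \le \ell_1 + 1$.
\end{sketch}
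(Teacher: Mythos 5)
Your proposal follows essentially the same route as the paper's proof: an induction/tail reduction to the single inequality $\ell_1\ge\ell_3$, disposal of the easy case by chaining the two parts of Proposition \ref{u5.5}, and an explicit symbol computation via Theorem \ref{u5} and the $\mathcal{DG}^{\pm}$ formulas in the delicate rank-equality case (which the paper phrases as the subcase $\ell_1=\ell_2$ for a $\theta$-start, the mirror of your $\ell_2=\ell_1+1$ subcase for a unipotent start). The only difference is cosmetic: you carry out the unipotent-start case and defer the $\theta$-start to duality, whereas the paper does the reverse.
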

\begin{proof}
Let $\pi_i\in\rm{Irr}(\sp_{2n_i}\fq)$. We will only prove the $\theta$-representation case. The proof of the unipotent representation case is similar.
We prove it by induction on $m$ of both unipotent representation case and $\theta$-representation case.

If $m\le3$, then there is nothing to prove. Assume that $m>3$. For $i>1$, we set $\pi'_1:=\pi_i$ and consider the descent sequence
\[
\pi_1'=\pi_i\xrightarrow{\ell_1'=\ell_i}\pi'_2=\pi_{i+1}\xrightarrow{\ell_2'=\ell_{i+1}}\cdots\xrightarrow{\ell_{m-i}'=\ell_{m-1}}\pi'_{m-i+1}=\pi_m.
\]
By our induction assumption, we get $\ell_i\ge \ell_{j}$ for $\forall i$ and $j\ge i+2$.

For $i=1$, by Proposition \ref{u5.5} (i), we have $\ell_1\ge\ell_2$ and then $\ell_1\ge \ell_j$ with $j> 3$.
The rest is to prove $\ell_1\ge\ell_3$. If $\ell_1>\ell_2$, then applying Proposition \ref{u5.5} (ii) on $\pi_3$, we have $\ell_3\le \ell_2+1$. Hence $\ell_1\ge\ell_3$.

Assume that $\ell_1=\ell_2$, $\pi_1=\pi_{\Lambda'}$, $\pi_2=\pl$ and $k=\frac{|\rm{def}(\Lambda')|}{2}$ . Then applying Proposition \ref{u5.5} (i) on $\pi_2$, one has
\[
\pl=\left\{
\begin{array}{ll}
\pi_{\cal{D}^+_{\theta}(\Lambda')}&\textrm{ if } \frac{|\rm{def}(\Lambda)|-1}{2}=k;\\
\pi_{\cal{D}^-_{\theta}(\Lambda')}&\textrm{ if } \frac{|\rm{def}(\Lambda)|-1}{2}=k-1.
\end{array}\right.
\]
Set $\pi_3=\pi_{\Lambda''}$. Then by Theorem \ref{u5}, either $\frac{|\rm{def}(\Lambda'')|}{2}=\frac{|\rm{def}(\Lambda)|-1}{2}+1$ or $\frac{|\rm{def}(\Lambda'')|}{2}=\frac{|\rm{def}(\Lambda)|-1}{2}$.

Assume that $\frac{|\rm{def}(\Lambda'')|}{2}=\frac{|\rm{def}(\Lambda)|-1}{2}+1$. By Theorem \ref{u5}, we have
\[
\begin{aligned}
\ell_3=&\ell^{\rm{FJ}}_0(\pi_{\Lambda''})\\
=&\left\{
\begin{array}{ll}
\left(\Upsilon(\Lambda''\right)_{*})^t_1+\frac{|\rm{def}(\Lambda'')|}{2} &\textrm{ if }\rm{def}(\Lambda'')\ge0;\\
\left(\Upsilon(\Lambda''\right)^{*})^t_1+\frac{|\rm{def}(\Lambda'')|}{2} &\textrm{ if }\rm{def}(\Lambda'')<0;\\
\end{array}\right.\\
=&\left\{
\begin{array}{ll}
\left(\Upsilon(\Lambda\right)^{*})^t_1+\frac{|\rm{def}(\Lambda)|-1}{2}+1 &\textrm{ if }\rm{def}(\Lambda)<0;\\
\left(\Upsilon(\Lambda\right)_{*})^t_1+\frac{|\rm{def}(\Lambda)|-1}{2}+1 &\textrm{ if }\rm{def}(\Lambda)>0.\\
\end{array}\right.\\
\end{aligned}
\]
For $\pl=\pi_{\cal{D}^-_{\theta}(\Lambda')}$, by Theorem \ref{u5}, we have
\[
\begin{aligned}
\ell_3=&\left\{
\begin{array}{ll}
\left(\Upsilon(\Lambda\right)^{*})^t_1+\frac{|\rm{def}(\Lambda)|-1}{2}+1 &\textrm{ if }\rm{def}(\Lambda)<0;\\
\left(\Upsilon(\Lambda\right)_{*})^t_1+\frac{|\rm{def}(\Lambda)|-1}{2}+1 &\textrm{ if }\rm{def}(\Lambda)>0.\\
\end{array}\right.\\
=&\left\{
\begin{array}{ll}
\left(\Upsilon(\Lambda'\right)^{*})^t_2+\frac{|\rm{def}(\Lambda')|}{2} &\textrm{ if }\rm{def}(\Lambda')<0;\\
\left(\Upsilon(\Lambda'\right)_{*})^t_2+\frac{|\rm{def}(\Lambda')|}{2} &\textrm{ if }\rm{def}(\Lambda')\ge0.\\
\end{array}\right.\\
\le & \ell^{\rm{FJ}}_0(\pi_{\Lambda'})\\
=&\ell_1
\end{aligned}
\]
For $\pl=\pi_{\cal{D}^+_{\theta}(\Lambda')}$, by Theorem \ref{u5} (i) and Proposition \ref{u5.5} (ii), one has
\[
\begin{aligned}
\ell_3=&\left\{
\begin{array}{ll}
\left(\Upsilon(\Lambda\right)^{*})^t_1+\frac{|\rm{def}(\Lambda)|-1}{2}+1 &\textrm{ if }\rm{def}(\Lambda)<0;\\
\left(\Upsilon(\Lambda\right)_{*})^t_1+\frac{|\rm{def}(\Lambda)|-1}{2}+1 &\textrm{ if }\rm{def}(\Lambda)>0.\\
\end{array}\right.\\
=&\left\{
\begin{array}{ll}
\left(\Upsilon(\Lambda\right)_{*})^t_1-\frac{|\rm{def}(\Lambda)|-1}{2} &\textrm{ if }\rm{def}(\Lambda)<0;\\
\left(\Upsilon(\Lambda\right)^{*})^t_1-\frac{|\rm{def}(\Lambda)|-1}{2} &\textrm{ if }\rm{def}(\Lambda)>0.\\
\end{array}\right.\\
=&\left\{
\begin{array}{ll}
\left(\Upsilon(\Lambda'\right)_{*})^t_2-\frac{|\rm{def}(\Lambda')|}{2} &\textrm{ if }\rm{def}(\Lambda')<0;\\
\left(\Upsilon(\Lambda'\right)^{*})^t_2-\frac{|\rm{def}(\Lambda')|}{2} &\textrm{ if }\rm{def}(\Lambda')\ge0.\\
\end{array}\right.\\
\le & \ell^{\rm{FJ}}_0(\pi_{\Lambda'})\\
=&\ell_1
\end{aligned}
\]
The proof of $\frac{|\rm{def}(\Lambda'')|}{2}=\frac{|\rm{def}(\Lambda)|-1}{2}$ case is similar.

\end{proof}

Let $\hat{\ell}{(\pi)}=(2\ell_1,2\ell_2,\cdots)$ be the largest descent sequence of $\pl$ and it is an array. By Proposition \ref{u5.5} and Proposition \ref{u7}, there are three possibilities of $\ell_i$:
 \begin{itemize}
 \item $\ell_{i-1}\ge\ell_i\ge\ell_{i+1}$;
\item $\ell_i=\ell_{i+1}-1$ and $\ell_{i-1}>\ell_{i}$;
 \item $\ell_i=\ell_{i-1}+1$ and $\ell_{i+1}<\ell_i$;
\end{itemize}

 We set
\[
\widetilde{\ell}{(\pl)}_i:=\left\{
\begin{array}{ll}
2\ell_i, &  \textrm{if }\ell_{i-1}\ge\ell_i\ge\ell_{i+1};\\
2\ell_i+1, & \textrm{if }\ell_i=\ell_{i+1}-1;\\
2\ell_i-1, & \textrm{if }\ell_i=\ell_{i-1}+1.
\end{array}\right.
\]
\begin{proposition}
The array $\widetilde{\ell}{(\pl)}=(\widetilde{\ell}{(\pl)}_1,\widetilde{\ell}{(\pl)}_2,\cdots)$ is a partition.
\end{proposition}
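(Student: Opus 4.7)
The plan is to show directly that $\widetilde{\ell}(\pl)_i \geq \widetilde{\ell}(\pl)_{i+1}$ for every $i$, by a finite case analysis on the three ``type'' possibilities recalled in the bullet list immediately before the proposition. First I would make the simple observation that these three types cannot appear in arbitrary order. A ``peak'' at position $i$ (type~3, so $\ell_i = \ell_{i-1}+1$) forces the previous position $i-1$ to be a ``dip'' (type~2), because $\ell_{i-1}<\ell_i$ rules out type~1 at $i-1$, while type~3 at $i-1$ would demand $\ell_i<\ell_{i-1}$, a contradiction. Symmetrically, a dip at $i$ is always followed by a peak at $i+1$. Thus dips and peaks occur only as consecutive pairs $(\textrm{type 2},\textrm{type 3})$.

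With this structural remark in hand, I would check the inequality $\widetilde{\ell}(\pl)_i\geq\widetilde{\ell}(\pl)_{i+1}$ by running through the nine ordered pairs of types for $(i,i+1)$. The pairs $(1,3)$, $(2,1)$, $(2,2)$, and $(3,3)$ are immediately impossible from the defining inequalities (e.g.\ $(1,3)$ would force both $\ell_i\geq \ell_{i+1}$ and $\ell_{i+1}=\ell_i+1$). The pair $(1,1)$ is trivial since $\ell_i\geq\ell_{i+1}$ and both entries are doubled. For $(1,2)$ one has $\widetilde{\ell}_i-\widetilde{\ell}_{i+1}=2(\ell_i-\ell_{i+1})-1\geq 1$ because the type~2 condition at $i+1$ gives $\ell_i>\ell_{i+1}$. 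For the key pair $(2,3)$ one computes $\widetilde{\ell}_i=2\ell_i+1$ and $\widetilde{\ell}_{i+1}=2(\ell_i+1)-1=2\ell_i+1$, giving equality. Finally $(3,1)$ and $(3,2)$ both reduce to the integer inequality $\ell_i\geq \ell_{i+1}+1$, which is part of the type~3 condition at $i$.

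The nonnegativity of the $\widetilde{\ell}(\pl)_i$ is a side remark: the only potentially problematic case is type~3, where $\widetilde{\ell}_i=2\ell_i-1$, but type~3 forces $\ell_i=\ell_{i-1}+1\geq 1$, so $\widetilde{\ell}_i\geq 1$. One should also verify the boundary: at the very last position of the descent sequence one has the trivial representation, and the three-type trichotomy applies with the convention that indices beyond the end do not appear, so no edge-case issues arise.

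The content of the argument is essentially bookkeeping, and I do not anticipate a technical obstacle; the only point that requires care is the structural observation that types~2 and~3 always come in consecutive dip\,--\,peak pairs, since this is what prevents the ``bad'' pair $(2,1)$ from appearing and is exactly what makes the $\pm 1$ correction work out in the pair $(2,3)$ to give the equality $\widetilde{\ell}_i=\widetilde{\ell}_{i+1}$.
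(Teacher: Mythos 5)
Your proof is correct and follows essentially the same route as the paper's: a direct case-by-case verification of $\widetilde{\ell}(\pi_\Lambda)_i\ge\widetilde{\ell}(\pi_\Lambda)_{i+1}$ using the three-type trichotomy, with your pair $(2,3)$ giving the same equality $\widetilde{\ell}_i=\widetilde{\ell}_{i+1}$ that the paper records in its second case. The only organizational difference is that the paper splits by the type of position $i$ and invokes Proposition \ref{u7} ($\ell_i\ge\ell_{i+2}$) in the subcase where $i+1$ is a dip, whereas you extract the needed inequality $\ell_i>\ell_{i+1}$ directly from the type-2 condition at $i+1$; the two are equivalent pieces of bookkeeping.
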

\begin{proof}We have to prove that $\widetilde{\ell}{(\pl)}_i\ge \widetilde{\ell}{(\pl)}_{i+1}$.

Assume that $\ell_{i-1}\ge\ell_i\ge\ell_{i+1}$. If $\ell_{i+1}\ne\ell_{i+2}-1$, then $\widetilde{\ell}{(\pl)}_i=2\ell_i\ge2\ell_{i+1}\ge \widetilde{\ell}{(\pl)}_{i+1}$. If $\ell_{i+1}=\ell_{i+2}-1$, by Proposition \ref{u7}, $\ell_i\ge\ell_{i+2}$, then $\widetilde{\ell}{(\pl)}_i=2\ell_i\ge 2\ell_{i+1}+2>2\ell_{i+1}+1=\widetilde{\ell}{(\pl)}_{i+1}$.

Assume that $\ell_i=\ell_{i+1}-1$. In this case, we have $\widetilde{\ell}{(\pl)}_i=\widetilde{\ell}{(\pl)}_{i+1}$.
Assume that $\ell_i=\ell_{i-1}+1$. By Proposition \ref{u7}, we have $\ell_i>\ell_{i-1}\ge\ell_{i+1}$. So $\widetilde{\ell}{(\pl)}_i=2\ell_i-1\ge \widetilde{\ell}{(\pl)}_{i+1}$.
\end{proof}

We call the partition $\widetilde{\ell}{(\pl)}$ the descent partition of $\pl$. To simplify the notation, we denote $\widetilde{\ell}{(\pl)}$ by $\widetilde{\ell}{}=(\widetilde{\ell}{_i})$ when no confusion raise.

\subsection{Case (C)}
In this section, we consider the cuspidal case. With similar argument in Case (B), according to the main theorems in \cite{Wang1,Wang2}, we have following result.
\begin{theorem}\label{6c}
Let $\pi\in\rm{Irr}(\sp_{2n}\fq)$ be an irreducible cuspidal representation. Then there exists a character $\psi$ of $\Fq$ such that
\[
\ell^{\rm{FJ}}_{0,\psi}(\pi)\ge\ell^{\rm{FJ}}_{0,\psi'}(\pi)
\]
for any character $\psi'$ of $\Fq$. Moreover, the descent $\cal{D}^{\rm{FJ}}_{\ell_0,\psi}(\pi)$ with respect to $\psi$ is an irreducible cuspidal representation.
\end{theorem}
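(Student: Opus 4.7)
The existence of a maximizing character $\psi$ is immediate. Since $\cal{D}^{\rm{FJ}}_{\ell_0,\psi}(\pi)$ depends only on the square class of $\psi$, and for odd $q$ there are only two nontrivial square classes in $\Fq^\times/(\Fq^\times)^2$, we simply select the class that achieves the larger value of $\ell^{\rm{FJ}}_{0,\psi}(\pi)$. (If the two values coincide then either choice works.) So the plan is to devote the bulk of the argument to the substantive claim that the descent is an irreducible cuspidal representation.

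Because $\pi$ is cuspidal, Proposition \ref{Lus} asserts that each factor of $\cal{L}_s(\pi)=\prod_{[a]}\pi[a]$ is a unipotent cuspidal representation of the corresponding component $G^{*F}_{[a]}(s)$. This rigidly restricts the eigenvalue data of $s$: unipotent cuspidals occur only for $\GGL_1$ (the trivial representation), for $\UU_{k(k+1)/2}$ (parameterized by the staircase partition $[k,k-1,\dots,1]$), for $\so_{2k(k+1)+1}$, and for $\o^\epsilon_{2k^2}$, with the latter two parameterized by staircase symbols of the form $\begin{pmatrix}2k,2k-1,\dots,0\\-\end{pmatrix}$ (or its transpose). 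Thus $\pi=\pi_{\rho,\Lambda,\Lambda'}$ has $\rho$, $\Lambda$ and $\Lambda'$ all of rigid ``staircase'' shape.

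I would then combine the multiplicity results from \cite{Wang1,Wang2} to compute $\cal{D}^{\rm{FJ}}_{\ell_0,\psi}(\pi)$ factor by factor under the Lusztig correspondence: Theorem \ref{a1} together with Theorem \ref{d1} handles the $a\neq\pm 1$ parts, and shows that the descent on those components is the unique irreducible representation whose partition is $\lambda[-a]^{-1}$, which remains staircase (possibly with trivial parts). For the $[\pm1]$ parts, the staircase shape of $\Lambda,\Lambda'$ forces $\Upsilon(\Lambda)^*$ and $\Upsilon(\Lambda)_*$ (respectively for $\Lambda'$) to be very unbalanced, so by the formulas in Theorem \ref{u5} one checks that $\rm{rank}(\cal{DG}^+_{\rm{un}}(\Lambda))\neq\rm{rank}(\cal{DG}^-_{\rm{un}}(\Lambda))$ (and similarly on the $\theta$ side), which rules out the two-summand alternative in Theorem \ref{u5} and yields irreducibility of the descent on these components. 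Finally, inspecting the resulting symbol, one verifies it is again of staircase shape, so the descent parameterizes a unipotent cuspidal of the new $G^{*F}_{[\pm1]}$ factor. By Proposition \ref{Lus} applied in reverse, $\cal{D}^{\rm{FJ}}_{\ell_0,\psi}(\pi)$ is cuspidal.

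The main obstacle is the combinatorial verification that the staircase shape of $\Lambda$ and $\Lambda'$ propagates through the descent operators $\cal{DG}^\pm_{\rm{un}}$ and $\cal{DG}^\pm_{\theta}$ in such a way that both the strict rank inequality (needed for irreducibility) and the staircase form (needed for cuspidality of the output) hold. This requires a careful case split on the signs of $\rm{def}(\Lambda)$ and $\rm{def}(\Lambda')$ and on which of the two square classes of $\psi$ realizes the maximum first occurrence index; the cuspidality constraints on $\pi$, however, make the symbols so rigid that only one case effectively survives in each situation, and the check is then straightforward from the definitions of $\cal{DG}^\pm$ preceding Theorem \ref{u5}.
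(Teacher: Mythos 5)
Your proposal follows essentially the same route as the paper: both exploit the fact that Lusztig's Jordan decomposition sends cuspidals to cuspidals, so that $\rho$, $\Lambda$, $\Lambda'$ are rigid staircase data, then compute the descent explicitly via the multiplicity formulas of Theorems \ref{u1}, \ref{a1} and \ref{d1} and the symbol combinatorics of subsection \ref{sec6.2}, concluding that the output symbols are again cuspidal staircases and that the multiplicity is one. The only cosmetic difference is that you make explicit the rank-inequality check ruling out the two-summand case of Theorem \ref{u5}, a verification the paper leaves to the reader.
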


\begin{proof}

Let $\pi'\in \rm{Irr}(\sp_{2(n-\ell_0)}\fq)$ be an irreducible representation appearing in $\CD^\rm{FJ}_{\ell_0, \psi}(\pi)$. By the definition of descent, we have
 \[
 m_\psi(\pi,\pi')\ne 0
 \]
 and
 \[
 m_\psi(\pi,\pi'')= 0
 \]
 for any  $\pi''\in \rm{Irr}(\sp_{2(n-\ell')}\fq)$ with $\ell'>\ell_0$.

Following the notations in Theorem \ref{u1}, we write $\pi=\prll$ and $\pi'=\prllc$. Since the Lusztig correspondence sends cuspidal representation to cuspidal, these marks $\rho$, $\Lambda$ and $\Lambda'$ correspond to cuspidal representation via (\ref{lus3}).
 Then we can get the explicitly description of symbols $\Lambda_1$ and $\Lambda_1'$ of $\pi'$ by Theorem \ref{u1} and similar argument in subsection \ref{sec6.2}. Actually, because of the cuspidality of $\pi$, some parts of symbols $\Lambda$ and $\Lambda'$ are empty (the explicitly description of symbols for unipotent cuspidal representations can be found in \cite[subsection 3.2, 3.3]{P3}). So the calculation and description of $\Lambda_1$ and $\Lambda_1'$ will be easier than it is in subsection \ref{sec6.2} and will be left to readers. In fact, the symbols $\Lambda_1$ and $\Lambda_1'$ also correspond to unipotent cuspidal representations. With the same argument is subsection \ref{sec6.1}, we would get the explicitly description of $\rho'$.  In particular, we find out that $\pi'=\prllc$ is also cuspidal.  By \cite[Theorem 1.1 (i), Corollary 1.3 (i)]{Wang1}, we can conclude that for any $\pi^*\in \rm{Irr}(\sp_{2(n-\ell_0)}\fq)$,  $m_\psi(\pi,\pi^*)\ne 0$ if and only if $\pi^*=\pi'$. Moreover, can deduce the multiplicity $m_\psi(\pi,\pi')$ to a multiplicity $m_\psi(\pi_\rho,\pi_\rho')$ where $\pi_\rho$ and $\pi'_\rho$ are irreducible representation in Case (A), and $\pi'_\rho$ is the descent of $\pi_\rho$. It follows from Theorem \ref{d1} that $m_\psi(\pi,\pi')=1$, i.e. the descent $\CD^\rm{FJ}_{\ell_1, \psi}(\pi)$ is an irreducible cuspidal representation.

\end{proof}

\section{Lemma in \cite{GRS} and exchanging roots Lemma}\label{sec7}
In this section, we recall the result for cuspidal representaiton in \cite{GRS}. And we generalize their results from cuspidal representations to the representations considered in this paper.

\subsection{Lemma in \cite{GRS}}
Although the lemmas in \cite{GRS} is in the global case, their proof also works over finite fields. Moreover, in \cite{GRS2}, they generalize their results and prove the ``exchanging roots Lemma" which also holds over finite fields. We list the finite field version of their results in this section.

Let $G = \sp_{2n}$. Let $U_r$ be the unipotent radical of the parabolic subgroup of $G$ whose Levi part
is $\GGL_1^r \times \sp_{2(n-r)}$. We define a character $\psi_{U_r}$ of $U_r$ as follows. If $u\in U_r^F$ then
$\psi_{U_r}:=\psi(u_{1,2}+u_{2,3}+\cdots+u_{r,r+1})$ where $\psi$ is a character of $\Fq$.

\begin{lemma}[finite fields case of Lemma 2.2 in \cite{GRS}]\label{g1}
Let $\pi$ be an irreducible cuspidal representation of $\sp_{2n}\fq$.
If
\[
\langle\pi,\psi_{U_r}\rangle_{U_r^F}=\frac{1}{U_r^F}\sum_{u\in U_r^F}\psi_{U_r}(u^{-1})\pi(u)\ne 0,
\]
 then there exists a number
$n \ge m > r$ such that $\pi$ has a nontrivial Fourier coefficient on $\wco_{((2m),1^{2(k-m)})}$.
\end{lemma}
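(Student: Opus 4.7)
The plan is to adapt the root-exchange and Fourier-expansion argument of \cite[Lemma 2.2]{GRS} to the finite field setting, carrying out an induction on $r$ that either increases the ``Whittaker length'' of the coefficient by one or produces the required Jacobi-type Fourier coefficient. The character $\psi_{U_r}$ is associated to a nilpotent element whose Jordan type is $(r+1, r+1, 1^{2n-2r-2})$ rather than of the form $(2m, 1^{2(n-m)})$, so the objective is to deform this character, one root at a time, until it is either supported on a root subgroup attached to a single Jordan block of length $2m > 2r$, or until the Whittaker length forces $m = n$.

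The first step is to expand the Fourier coefficient $\langle \pi, \psi_{U_r}\rangle_{U_r^F}$ along a one-parameter root subgroup $X_\alpha$ with $\alpha$ chosen to extend the chain $u_{1,2} + \cdots + u_{r,r+1}$ by one additional entry toward the symplectic block. Since $X_\alpha(\Fq) \cong \Fq$ is a finite abelian group, this expansion decomposes the coefficient as a sum over characters of $\Fq$ and splits into two kinds of contributions. For the trivial character, one applies a root-exchange identity (the finite-field analogue of the root-exchange lemma, which will be formulated in the next subsection) to replace the subgroup $U_r \cdot X_\alpha$ by a conjugate unipotent subgroup sitting inside the unipotent radical of a proper parabolic of $\sp_{2n}$. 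The cuspidality of $\pi$ then forces this contribution to vanish, by Frobenius reciprocity applied to the parabolic restriction.

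For the nontrivial characters of $X_\alpha$, a further change of variables inside the Levi stabilizer identifies the resulting coefficient with one of two types: either a $\psi_{U_{r+1}}$-type coefficient of $\pi$ (the Whittaker length has grown by one), or a Fourier coefficient of $\pi$ against the character of the unipotent radical attached to the orbit $\wco_{(2(r+1), 1^{2(n-r-1)})}$. In the latter case we set $m = r+1$ and conclude. In the former case we iterate the procedure with $r$ replaced by $r+1$. The process must terminate after at most $n - r$ steps, since when the Whittaker length reaches $n$ the character $\psi_{U_n}$ is the generic Whittaker character on the unipotent radical of a Borel, which is precisely the character associated to the regular nilpotent orbit $\wco_{(2n)} = \wco_{(2n, 1^{0})}$, so one can take $m = n$.

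The main obstacle is the bookkeeping for the root-exchange step: one must verify that after exchanging the trivial-character contribution for a subgroup inside a parabolic radical, the resulting character is still of ``Whittaker-type'' so that cuspidality of $\pi$ (which in the finite field setting means the vanishing of all proper parabolic Jacquet modules) kills the term. This amounts to identifying the commutator relations between $X_\alpha$, the chain $u_{1,2} + \cdots + u_{r,r+1}$, and the opposite root subgroup used in the exchange; the required compatibility is formally identical to the local computation in \cite{GRS} and will be packaged into the finite field ``exchanging roots lemma'' established in the sequel of this section.
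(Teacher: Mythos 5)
Your proposal takes essentially the same route as the paper, which for this lemma simply invokes the proof of \cite[Lemma 2.2]{GRS} and notes that it transfers to $\Fq$: Fourier-expand the $\psi_{U_r}$-coefficient along the next row, kill the trivial-character term by cuspidality (Frobenius reciprocity through the parabolic with Levi $\GGL_{r+1}\times\sp_{2(n-r-1)}$, exactly as isolated in the paper's proof of the generalization to Lemma \ref{ex1}), sort the remaining characters into those yielding a $\psi_{U_{r+1}}$-coefficient and those yielding a Fourier coefficient on $\wco_{(2(r+1),1^{2(n-r-1)})}$, and iterate until $m=n$. The one point to tighten is that the expansion should be performed over the entire $(r+1)$-st row (a Heisenberg group on which the Levi $\sp_{2(n-r-1)}$ acts) rather than over a single root subgroup $X_\alpha$: the trivially extended character on $U_rX_\alpha$ alone is still nontrivial on $u_{r,r+1}$ and hence on the radical of every proper parabolic contained in $U_rX_\alpha$, so cuspidality does not yet apply at that stage, and the dichotomy among the nontrivial characters is precisely the orbit decomposition of the full row (isotropic directions giving $\psi_{U_{r+1}}$, the anisotropic/central direction giving the $(2(r+1),1^{2(n-r-1)})$-coefficient) under that Levi action.
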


\begin{lemma}[finite fields case of Lemma 2.4 in \cite{GRS}]\label{g2}
Let $\pi$ be an irreducible cuspidal representation of $\sp_{2n}\fq$.
Let $\co_\Gamma\subset \wco_{((2r+1)^2,d_2,d_3,\cdots)}$ and $2r+1\ge d_i$ for all $i$, and suppose
that $\cal{O}_\Gamma$ supports $\pi$. Then there exists a number $m$ such that $2m > 2r + 1$ and
$\co_{\Gamma'}$ supports $\pi$ with $\co_{\Gamma'}\subset\wco_{(2m,1^{2(k-m)})}$.
\end{lemma}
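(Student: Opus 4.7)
The plan is to transcribe the proof of \cite[Lemma 2.4]{GRS} into the finite fields setting, using the finite fields ``exchanging roots Lemma'' mentioned at the start of Section \ref{sec7} together with Lemma \ref{g1}. First I would unwind what it means for $\co_\Gamma\subset\wco_{((2r+1)^2,d_2,d_3,\dots)}$ to support $\pi$: choose an $\frak{sl}_2$-triple $\gamma=\{X,H,Y\}\in\Theta^F$ with $X\in\co_\Gamma$, and use the structure theory of subsection \ref{sec5.3} to express $\cal{F}(\pi,\co_\Gamma,\psi)\ne 0$ as the non-vanishing of
\[
\frac{1}{|G_{\ge 1.5}^F|}\sum_{u\in G_{\ge 1.5}^F}\pi(u)\,\psi(u)^{-1}
\]
on some vector, where the character $\psi$ on $G_{\ge 1.5}^F$ is built from $\kappa(Y,\cdot)$ as in Section \ref{sec5}.

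Next I would pick a good normal form for $\gamma$. Because the partition has two equal largest parts $2r+1$, one can choose $X$ inside a standard $\sp_{4r+2}$ block sitting inside $\sp_{2n}$ as two Jordan blocks of size $2r+1$, so that the grading $\fg=\oplus_i\fg_i$ coming from $H$ exposes a copy of the unipotent radical $U_r$ (of the parabolic with Levi $\GGL_1^r\times\sp_{2(n-r)}$) as a quotient of a subgroup of $G_{\ge 1}$, with the character $\psi_{U_r}$ of Lemma \ref{g1} arising as a pushdown of $\psi$. The remaining directions in $\fg_{\ge 1}$ come either from the second $(2r+1)$-block or from the smaller blocks $d_i\le 2r+1$; the hypothesis $d_i\le 2r+1$ is precisely what ensures these directions are ``exchangeable'' root subgroups in the sense of \cite{GRS,GRS2}.

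Then I would apply the finite fields exchanging roots Lemma to move integration from the root directions coming from the second $(2r+1)$-block over to the root directions coming from the first block, keeping the remaining blocks untouched. The exchange produces boundary terms that are constant terms of $\pi$ along proper parabolic subgroups of $\sp_{2n}$; because $\pi$ is cuspidal (Case (C)), every such constant term vanishes, so only the desired ``exchanged'' Fourier coefficient survives. The net effect is that $\langle\pi,\psi_{U_r}\rangle_{U_r^F}\ne 0$ on some vector, i.e.\ the hypothesis of Lemma \ref{g1} is satisfied. Applying Lemma \ref{g1} produces $m>r$ with $\pi$ having a nontrivial Fourier coefficient on $\wco_{(2m,1^{2(n-m)})}$, and since $2m\ge 2r+2>2r+1$ we are done.

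The main obstacle will be implementing the exchanging roots identity cleanly in the finite fields language: one must identify the two sets of root subgroups to be exchanged, check that the relevant commutators lie in the kernel of $\psi$, and verify that the cuspidality of $\pi$ really does kill every boundary term that appears. In the $p$-adic setting of \cite{GRS,GRS2} this is done by manipulating invariant integrals on unipotent subgroups and using vanishing of constant terms along proper parabolics; the finite fields analogue replaces those integrals by finite sums, but the combinatorial bookkeeping of which root subgroups are swapped, and the role of the constraint $d_i\le 2r+1$ in forbidding any ``obstructing'' commutators, is exactly the same. Once this exchanging step is in place the rest is a direct appeal to Lemma \ref{g1}.
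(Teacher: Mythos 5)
Your proposal follows essentially the same route as the paper, which simply transports the proof of \cite[Lemma 2.4]{GRS} to finite fields: use root exchange to show that the nonvanishing Fourier coefficient on $\wco_{((2r+1)^2,d_2,\dots)}$ forces $\langle\pi,\psi_{U_r}\rangle_{U_r^F}\ne 0$, and then invoke Lemma \ref{g1}. One small correction: the exchanging-roots identity (Lemma \ref{erl}) is an exact Fourier-expansion identity with no boundary terms, so cuspidality is \emph{not} needed in that step --- it enters only through Lemma \ref{g1}; this is harmless here since $\pi$ is assumed cuspidal, but it is exactly the observation the paper exploits later to generalize this lemma to Lemma \ref{ex2}.
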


The cuspidality assumption of
$\pi$ in \cite{GRS} was used to obtain Lemma \ref{g1}. To prove Lemma \ref{g2}, one needs Lemma \ref{g1}. Moreover, Lemma \ref{g1} is the only part we need cuspidality. In a word, if one can generalize Lemma \ref{g1} from cuspidal representations to general case, then every Lemma above holds for arbitrary representation. We extend Lemma \ref{g1} as follow:

\begin{lemma}\label{ex1}
Let $\pi$ be an irreducible representation of $\sp_{2n}\fq$. Let $P$ be a parabolic subgroup of $\sp_{2n}$ with Levi factor $\GGL_{r+1}\times\sp_{2(n-r-1)}$. Assume that
\[
\langle\pi,\rm{Ind}^{G^F}_{P^F}(\sigma\otimes\tau)\rangle= 0
\]
for any generic representation $\sigma$ of $\GGL_{r+1}\fq$ and any irreducible representation $\tau$ of $\sp_{2(n-r-1)}\fq$.
If
\[
\langle\pi,\psi_{U_r}\rangle_{U_r^F}=\sum_{u\in U_r^F}\psi_{U_r}(u^{-1})\pi(u)\ne 0,
\]
for some choice of $\psi$, then there exists a number
$n \ge m > r$ such that $\pi$ has a nontrivial Fourier coefficient on $\wco_{(2m,1^{2(k-m)})}$.
\end{lemma}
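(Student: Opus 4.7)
The plan is to follow the same Fourier-expansion (root-exchange) argument that proves Lemma \ref{g1} in \cite{GRS}, and to isolate the step at which cuspidality enters so that the present hypothesis on parabolic inductions may take its place.

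Starting from $\langle\pi,\psi_{U_r}\rangle_{U_r^F}\ne 0$, I would Fourier-expand this coefficient along an abelian subgroup $V\subset U_{r+1}$ sitting above $U_r$ which is normalized, up to a character twist, by the stabilizer inside the Levi $M_r\cong\GGL_1^r\times\sp_{2(n-r)}$ of the character $\psi_{U_r}$. Organising the characters of $V$ into orbits of this stabilizer decomposes the original coefficient as a finite sum whose terms split into two families: an ``open-orbit'' family supported on an extended unipotent subgroup, and ``closed-orbit'' families supported on characters that factor through the unipotent radical of the parabolic $P$ with Levi $\GGL_{r+1}\times\sp_{2(n-r-1)}$.

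After applying the exchange-of-roots technique of \cite{GRS,GRS2}, the open-orbit contribution becomes either a Fourier coefficient against the character attached to a partition of type $(2(r+1),1^{2(n-r-1)})$, in which case the conclusion holds with $m=r+1$, or a Fourier coefficient against a character attached to a more degenerate partition of type $((2r+1)^2,d_2,\ldots)$ with $2r+1\ge d_i$, to which one applies the finite-field version of Lemma \ref{g2}; this latter lemma does not require cuspidality (only the conclusion of Lemma \ref{g1} invoked inductively at smaller ranks) and furnishes the desired $m>r$. The closed-orbit contributions, by Frobenius reciprocity, have non-vanishing equivalent to the existence of a nonzero $G^F$-homomorphism $\pi\to\rm{Ind}^{G^F}_{P^F}(\sigma\otimes\tau)$ with $\sigma\in\rm{Irr}(\GGL_{r+1}\fq)$ generic and $\tau\in\rm{Irr}(\sp_{2(n-r-1)}\fq)$, and are therefore eliminated by the hypothesis.

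The main obstacle is implementing the root exchange rigorously over $\Fq$. One must show that each orbit contribution can be conjugated and re-expanded to a form in which the nilpotent support is manifestly either of one of the two open-orbit types or of the parabolic-Whittaker (closed-orbit) type; this requires careful bookkeeping of commutator relations among root subgroups of $\sp_{2n}$ and of the $q$-power volume factors arising from Fourier inversion on $V^F$. Since the exchange-of-roots machinery in \cite{GRS,GRS2} is purely algebraic and uses only the commutator structure of root subgroups, it should transfer to the finite-field setting without essential change, which is why the cuspidality in the original argument can be cleanly isolated and replaced by the weaker parabolic-induction condition assumed here.
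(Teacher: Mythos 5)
Your proposal is correct and follows essentially the same route as the paper: the paper's proof likewise imports the Fourier-expansion/root-exchange argument of \cite{GRS} wholesale, observes that cuspidality is used only to kill the degenerate term $\langle \pi,\mathrm{Ind}^{G^F}_{U_{r+1}^F}\psi_{U_r}\rangle$, and identifies the vanishing of that term with the hypothesis $\langle\pi,\mathrm{Ind}^{G^F}_{P^F}(\sigma\otimes\tau)\rangle=0$ for all generic $\sigma$ via the decomposition of the Gelfand--Graev representation of $\GGL_{r+1}\fq$. Your extra care that any invocation of Lemma \ref{g2} happens only at strictly smaller rank correctly avoids the circularity that would otherwise arise, since the paper derives Lemma \ref{ex2} from Lemma \ref{ex1}.
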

\begin{proof}
Let $P=LU$, and let $U_{r+1}$ be the unipotent radical of the parabolic subgroup of $G$ whose Levi part
is $\GGL_1^{r+1} \times \sp_{2(n-r-1)}$. Let $U'_{r+1}$ be the subgroup of $\GGL_{r+1}$ consist of unipotent upper
triangular matrices. We have $U_{r+1}=U'_{r+1}\ltimes U$, and the character $\psi_{U_r}$ can be naturally view as a regular character of $U_{r+1}$.

 In the proof of Lemma \ref{g1}, we can conclude that
 \[
\sum_{u\in U_{r+1}^F}\psi_{U_{r+1}}(u^{-1})\pi(u)=\langle \pi,\rm{Ind}^{G^F}_{U_{r+1}^F}\psi_{U_r}
\rangle.
\]
 The cuspidality assumption of $\pi$ in \cite{GRS} was only used to obtain following statement:
\[
\langle \pi,\rm{Ind}^{G^F}_{U_{r+1}^F}\psi_{U_r}
\rangle=0.
\]
And above equality holds if and only if
\[
\langle \pi,\rm{Ind}^{G^F}_{\GGL_{r+1}\fq\ltimes U^F}\sigma\otimes1
\rangle= 0
\]
for any generic representation $\sigma$ of $\GGL_{r+1}\fq$. In other words, $\pi$ can not be a irreducible component of $\rm{Ind}^{G^F}_{P^F}\sigma\otimes\pi'$ for any generic representation $\sigma$ of $\GGL_{r+1}\fq$ and any irreducible representation $\pi'$ of $\sp_{2(n-r-1)}\fq$, which is our assumpiton.
\end{proof}

Then we can extend Lemma \ref{g2} as follow:
\begin{lemma}\label{ex2}
Let $\pi$ be an irreducible representation of $\sp_{2n}\fq$. Let $P$ be a parabolic subgroup of $\sp_{2n}$ with Levi factor $\GGL_{r+1}\times\sp_{2(n-r-1)}$. Assume that
\[
\langle\pi,\rm{Ind}^{G^F}_{P^F}(\sigma\otimes\tau)\rangle= 0
\]
for any generic representation $\sigma$ of $\GGL_{r+1}\fq$ and any irreducible representation $\tau$ of $\sp_{2(n-r-1)}\fq$.
Let $\wco=\wco_{((2r+1)^2,d_2,d_3,\cdots)}$ with $2r+1\ge d_i$ for all $i$, and suppose
that $\wco$ supports $\pi$. Then there exists a number $m$ such that $2m > 2r + 1$ and that
$\wco_{(2m,1^{2(k-m)})}$ supports $\pi$.
\end{lemma}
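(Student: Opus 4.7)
The plan is to follow the argument of Lemma~2.4 in \cite{GRS} (our Lemma~\ref{g2}) essentially verbatim, replacing the single invocation of the cuspidality hypothesis by an appeal to the newly-established Lemma~\ref{ex1}. The key point is that the extension of Lemma~\ref{g1} to Lemma~\ref{ex1} is precisely the place where cuspidality was used; everything else in the proof of Lemma~\ref{g2} is a purely combinatorial root-exchange computation on unipotent subgroups that does not inspect $\pi$ at all.

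First, I would encode the hypothesis that $\wco_{((2r+1)^2, d_2, d_3, \ldots)}$ supports $\pi$ as the non-vanishing of a Fourier coefficient $\langle \pi, \psi_{\wco}\rangle$ attached to a standard $\mathfrak{sl}_2$-triple $\gamma = \{X,H,Y\}$ realizing this partition, as in Section~\ref{sec5.3}. The crucial combinatorial feature is that the first two parts of the partition coincide and equal $2r+1$; this forces the unipotent subgroup $G_{\ge 1}^F$ to contain a distinguished abelian block corresponding to the two length-$(2r+1)$ Jordan blocks, on which $\psi$ restricts in a controlled way.

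Second, I would carry out the root-exchange manipulation of \cite{GRS}: using a sequence of conjugations by elements of $G^F$ that preserve $\pi$, together with Fourier expansions along abelian subgroups of $G_{\ge 1}$, the integral defining $\langle \pi, \psi_{\wco}\rangle$ can be rewritten so that it factors through the Whittaker-type coefficient $\langle \pi, \psi_{U_r}\rangle_{U_r^F}$ appearing in Lemma~\ref{ex1} (up to integrals over residual unipotent subgroups involving the smaller blocks $d_2, d_3, \ldots$, whose presence is harmless because $2r+1 \ge d_i$). Since this root-exchange is purely a computation on the unipotent radical side and does not involve the representation $\pi$ beyond its being an abstract representation of $G^F$, the same manipulation that works in \cite{GRS} works in our setting.

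Third, having deduced $\langle \pi, \psi_{U_r}\rangle_{U_r^F} \neq 0$ from the supposed non-vanishing of the Fourier coefficient on $\wco$, I apply Lemma~\ref{ex1}: the parabolic vanishing hypothesis on $\text{Ind}^{G^F}_{P^F}(\sigma \otimes \tau)$ transfers directly, and Lemma~\ref{ex1} yields some $m > r$ for which $\pi$ has a non-trivial Fourier coefficient on $\wco_{(2m, 1^{2(k-m)})}$, giving the conclusion.

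The main obstacle I anticipate is purely bookkeeping: I must verify that the finite-field versions of the root-exchange identities used in \cite{GRS} really do reduce the Fourier coefficient attached to $((2r+1)^2, d_2, d_3, \ldots)$ to one attached to $U_r$ with the correct character $\psi_{U_r}$, rather than a twist of it. Since \cite{GRS} performs this reduction in the global/adelic setting using only algebraic manipulations of unipotent groups (already formalized at the level of finite-field analogues in \cite{GRS2}), no genuinely new idea is required, and the replacement of Lemma~\ref{g1} by Lemma~\ref{ex1} is the only non-formal change.
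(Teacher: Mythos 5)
Your proposal is correct and follows essentially the same route as the paper: the paper's proof likewise observes that the root-exchange computation in the proof of Lemma 2.4 of \cite{GRS} is representation-independent and yields $\langle\pi,\psi_{U_r}\rangle_{U_r^F}\ne 0$ from the non-vanishing of the Fourier coefficient on $\wco_{((2r+1)^2,d_2,\dots)}$, after which the conclusion follows immediately from Lemma \ref{ex1}. Your identification of Lemma \ref{g1} as the sole point where cuspidality entered is exactly the point the paper makes.
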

\begin{proof}
 By the proof of Lemma \ref{g2} in \cite{GRS}, we conclude that
\[
\sum_{u\in U_r^F}\psi_{U_r}(u^{-1})\pi(u)\ne 0.
\]
 For more detail, see \cite[p.11]{GRS}. Hence, this lemma follows immediately from Lemma \ref{ex1}

\end{proof}

\subsection{Exchanging roots Lemma}
Let $C\subset G$ be an $F$-stable subgroup of a maximal unipotent subgroup of $G$, and
let $\psi_C$ be a non-trivial character of $C^F$. Assume that there are two unipotent
$F$-stable subgroups $X$, $Y$ , such that the following conditions are satisfied.
 \begin{itemize}
 \item (1)   $X$ and $Y$ normalize $C$;
\item (2)  $X\cap C$ and $Y \cap C$ are normal in $X$ and $Y$ , respectively, and $X \cap C\verb|\|X$ and
$Y \cap C\verb|\|Y $ are abelian;
\item (3) $X^F$ and $Y^F$ preserve $\psi_C$ (when acting by conjugation);
\item (4)  $\psi_C$ is trivial on $(X \cap C)^F$ and on $(Y \cap C)^F$;
\item (5)   $[X, Y ] \subset C$;
\item (6) The pairing $(X \cap C)^F\verb|\|X^F \times(Y \cap C)^F\verb|\|Y^F \to \bb{C}^\times$, given by
\[
(x, y) \to \psi_C([x, y]),
\]
is multiplicative in each coordinate, non-degenerate, and identifies $(Y \cap C)^F\verb|\|Y^F$
with the dual of $(X \cap C)^F\verb|\|X^F$ and $(X \cap C)^F\verb|\|X^F$ with the dual of $(Y \cap C)^F\verb|\|Y^F$.
\end{itemize}
We represent the setup above by the following diagram,
\begin{equation}\label{er}
\begin{matrix}
&&A &&\\
&\nearrow&&\nwarrow&\\
B=CY&&&&D=CX\\
&\nwarrow&&\nearrow&\\
&&C&&
\end{matrix}.
\end{equation}
Here, $A = BX = DY = CXY $. Extend the character $\psi_C$ to a character $\psi_B$, of
$B^F$, and to a character $\psi_D$ of $D^F$, by making it trivial on $Y^F$ and on $X^F$.

\begin{lemma}[Exchanging roots Lemma (Lemma 7.1 in \cite{GRS2})]\label{erl}
Let $\pi$ be an irreducible representation of $G^F$. Then
\[
\langle \pi,\psi_{B}\rangle_{B^F}=\sum_{u\in B^F}\pi(u)\psi_B(u^{-1})=\sum_{y\in (Y \cap C)^F\verb|\|Y^F}\sum_{u\in D^F}\pi(uy)\psi_D(u^{-1})
\]
\end{lemma}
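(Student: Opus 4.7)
The plan is to interpret both sides of the identity as traces of operators acting on the underlying space $V_\pi$ of $\pi$, reducing the claim to a linear-algebra statement on the $(C^F,\psi_C)$-eigenspace $V_{\psi_C}\subset V_\pi$. Using the decomposition $B^F=C^F\cdot Y^F$ (with overcount $|(Y\cap C)^F|$), together with $\psi_B|_{C^F}=\psi_C$ and $\psi_B|_{Y^F}=\mathbf{1}$, I would first rewrite the left-hand side as
$$\sum_{u\in B^F}\pi(u)\,\psi_B(u^{-1}) \;=\; \mathrm{tr}\bigl(T_C\,S_Y\bigr),$$
where $T_C:=\sum_{c\in C^F}\pi(c)\psi_C(c^{-1})$ and $S_Y:=\sum_{\bar y\in (Y\cap C)^F\backslash Y^F}\pi(\bar y)$. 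An analogous decomposition $D^F=C^F\cdot X^F$ transforms the right-hand side into $\mathrm{tr}(T_C\,S_X\,S_Y)$ with $S_X:=\sum_{\bar x\in (X\cap C)^F\backslash X^F}\pi(\bar x)$. Since $T_C=|C^F|\cdot P_{\psi_C}$, where $P_{\psi_C}$ is the projection onto $V_{\psi_C}$, the lemma reduces to verifying $\mathrm{tr}(S_Y|_{V_{\psi_C}})=\mathrm{tr}(S_XS_Y|_{V_{\psi_C}})$.

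The core of the argument is a character-theoretic analysis on $V_{\psi_C}$. Conditions (3) and (4) guarantee that both $X^F$ and $Y^F$ preserve $V_{\psi_C}$ and that the $X^F$-action factors through the abelian quotient $X^F/(X\cap C)^F$, so $V_{\psi_C}=\bigoplus_\chi V_{\psi_C,\chi}$ decomposes into character eigenspaces. Using condition (5) to write $\bar x\bar y=[\bar x,\bar y]\,\bar y\,\bar x$ with $[\bar x,\bar y]\in C^F$ and pulling $\psi_C$ through, a direct calculation shows that $\pi(\bar y)$ carries $V_{\psi_C,\chi}$ into $V_{\psi_C,\chi\cdot\eta_{\bar y}}$, where $\eta_{\bar y}(\bar x):=\psi_C([\bar x,\bar y])$ is precisely the pairing from condition (6). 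The non-degeneracy in (6) then implies that $\bar y\mapsto\eta_{\bar y}$ is a bijection from $(Y\cap C)^F\backslash Y^F$ onto the character group of $X^F/(X\cap C)^F$, so the operators $\pi(\bar y)$ give isomorphisms between the various $V_{\psi_C,\chi}$'s; in particular, they all share a common dimension equal to $\dim V_{\psi_C,\mathrm{triv}}$, and so $\dim V_{\psi_C}=|X^F/(X\cap C)^F|\cdot\dim V_{\psi_C,\mathrm{triv}}$.

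It remains to evaluate the two traces. The operator $\pi(\bar y)|_{V_{\psi_C}}$ contributes to $\mathrm{tr}(S_Y|_{V_{\psi_C}})$ only when it preserves each $V_{\psi_C,\chi}$, i.e., when $\eta_{\bar y}$ is trivial; by non-degeneracy this forces $\bar y=e$, giving $\mathrm{tr}(S_Y|_{V_{\psi_C}})=\dim V_{\psi_C}$. On the other hand, orthogonality of characters of the abelian group $X^F/(X\cap C)^F$ yields $S_X|_{V_{\psi_C}}=|X^F/(X\cap C)^F|\cdot P_{\mathrm{triv}}$, where $P_{\mathrm{triv}}$ projects onto $V_{\psi_C,\mathrm{triv}}$, and the same diagonal bookkeeping produces $\mathrm{tr}(S_XS_Y|_{V_{\psi_C}})=|X^F/(X\cap C)^F|\cdot\dim V_{\psi_C,\mathrm{triv}}$. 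The two sides now match thanks to the dimension identity above. The most delicate step will be verifying the intertwining formula $\pi(\bar y):V_{\psi_C,\chi}\to V_{\psi_C,\chi\cdot\eta_{\bar y}}$: this requires using the commutator identity simultaneously with the $X^F$- and $Y^F$-invariance of $\psi_C$ and its triviality on $(X\cap C)^F$ and $(Y\cap C)^F$, so that every invocation of $\psi_C$ on a conjugate or commutator remains well-defined and agrees with the pairing of condition (6).
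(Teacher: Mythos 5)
Your argument is correct, but it proceeds along a genuinely different route from the one the paper relies on. The paper does not reprove Lemma \ref{erl} at all: it quotes it from \cite{GRS2}, and the technique it actually exhibits (in the proof of the special case, Lemma \ref{erL3}) is the iterative ``exchange of roots'' computation --- filter $Y=\prod_r Y_r$, Fourier-expand the partial sum $\phi_{m'+1}$ along each $Y_{m'}^F$, use the non-degenerate pairing to realize every character of $Y_{m'}^F$ as $y\mapsto\psi_C([x_{m'}^{-1},y^{-1}])$ for some $x_{m'}\in X_{m'}^F$, and then absorb $x_{m'}$ by successive changes of variables in the sums over $C^F$ and $X^F$. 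Your proof instead reads both sides as traces: writing $T_C=\sum_{c\in C^F}\psi_C(c^{-1})\pi(c)=|C^F|P_{\psi_C}$, decomposing $V_{\psi_C}$ into eigenspaces for the finite abelian group $X^F/(X\cap C)^F$, and observing that $\pi(\bar y)$ translates the eigenspace decomposition by the pairing character $\eta_{\bar y}$, so that each side collapses to $|C^F|\dim V_{\psi_C}$ (the off-diagonal blocks being traceless, and the dimension identity $\dim V_{\psi_C}=|X^F/(X\cap C)^F|\cdot\dim V_{\psi_C,\mathrm{triv}}$ coming from the bijectivity of $\bar y\mapsto\eta_{\bar y}$). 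The computations check out, including the normalizations: the overcounting factors $|(Y\cap C)^F|$ and $|(X\cap C)^F|$ cancel exactly as you need. What your approach buys is a short, self-contained proof of the numerical identity; what the paper's (and \cite{GRS2}'s) approach buys is a statement at the level of the functions $\phi_r(g)$ --- i.e., an isomorphism of twisted Jacquet-type functionals rather than an equality of their traces --- which is the form one needs in order to iterate the exchange stage by stage as in the proof of Lemma \ref{erL3}, and which survives in the automorphic setting where no trace is available. Two small points worth tightening if you write this up: the preservation of $V_{\psi_C}$ by $X^F$ and $Y^F$ uses conditions (1) and (3) (normalization of $C$ and invariance of $\psi_C$), with (4) entering only to make the actions factor through the abelian quotients; and the identification $B^F=C^F\,Y^F$ (with overcount $|(C\cap Y)^F|$) should be justified by connectedness of $C$ and Lang's theorem.
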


To compose a Fourier coefficient with another Fourier coefficient, we need the following lemma.
The proof of the following lemma is similar to the proof of Lemma 2.6 in \cite{GRS} which is a special case of Exchanging Roots Lemma. But we would like to give a direct proof here.
\begin{lemma}\label{erL3}
Let $\pi$ be an irreducible representation of $\sp_{2n}\fq$.
Let $\cal{O}_\Gamma=\co_{((2n_1,d_2 \cdots d_s),(q_i))}$ where $2n\ge d_i$, for all $i$ and $d_2 + \cdots + d_s +
2n_1 = 2n$. Then
\[
\rm{dim}\cal{F}(\pi,\co_{\Gamma},\psi)=\rm{dim}\cal{F}(\cal{F}(\pi,\co_{(2n_1,1^{2(n-n_1)}),(q_1)},\psi ),\co_{(d_2,d_3,\cdots,d_s),(q_i')},\psi)
\]
where
\[
(q_i')=\left\{
\begin{array}{ll}
(q_2,\cdots),&\textrm{ if }d_2<2n_1;\\
((-1)^{\#\{i|d_i=2n_1\}},q_2,\cdots),&\textrm{ if }d_2=2n_1.\\
\end{array}
\right.
\]
\end{lemma}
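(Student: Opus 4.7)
The plan is to realize the Fourier coefficient attached to the compound orbit $\co_\Gamma$ as an iterated Fourier coefficient by a sequence of applications of the Exchanging Roots Lemma (Lemma~\ref{erl}). Fix a symplectic basis of $V=\bb{F}_q^{2n}$ adapted to a Jordan decomposition of $X$: choose an $\mathfrak{sl}_2$-triple $\gamma=\{X,H,Y\}$ of type $\co_\Gamma$ that splits as $\gamma=\gamma_1+\gamma_2$, where $\gamma_1=\{X_1,H_1,Y_1\}$ has Jordan type $(2n_1,1^{2(n-n_1)})$ and acts non-trivially only on a symplectic subspace $V_1\subset V$ of dimension $2n_1$, while $\gamma_2=\{X_2,H_2,Y_2\}$ acts with Jordan type $(d_2,\ldots,d_s)$ on the orthogonal complement $V_2\cong\bb{F}_q^{2(n-n_1)}$. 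The centralizer $M_{X_1}$ of $\gamma_1$ in $G=\sp_{2n}$ contains $\sp(V_2)\cong\sp_{2(n-n_1)}$, and $\gamma_2$ sits naturally inside this copy.

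Next I would compare the unipotent subgroups and characters used on each side. Write $G_{\ge i}$, $G^{(1)}_{\ge i}$ and $G^{(2)}_{\ge i}$ for the unipotent subgroups of $G$, $G$ and $\sp(V_2)$ attached to the gradings by $H$, $H_1$ and $H_2$ respectively. Because $[H_1,H_2]=0$ and $H=H_1+H_2$, one has $G_{\ge 1}^{(1)}\cdot G_{\ge 1}^{(2)}\subset G_{\ge 1}$, with quotient spanned by the ``mixed'' root subspaces of $\fg$ that have non-zero weight for both $H_1$ and $H_2$. Moreover $\kappa(Y,-)=\kappa(Y_1,-)+\kappa(Y_2,-)$ on $\fg_{\ge 1}$, so the character $\psi$ used on the left side restricts on each factor to the character used in the iterated construction on the right side.

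The core of the argument is a weight-by-weight elimination of the mixed root spaces via Lemma~\ref{erl}. For each pair of opposite mixed $H$-weights, take $X$ to be the mixed roots with positive $H_1$-weight, $Y$ the mixed roots with negative $H_1$-weight, and $C$ the unipotent subgroup built from the current intermediate data. The pairing $(x,y)\mapsto\psi_C([x,y])=\psi(\kappa(Y,[x,y]))$ is non-degenerate because the $\mathfrak{sl}_2$-theory of $\gamma$ identifies $[X,Y]$ with a symplectic form on the weight-$2$ part of $\fg$. Conditions (1)--(6) of the exchange setup are then straightforward to verify, and iterating gives the equality of Hom-dimensions up to the identification of the outer coefficient with $\cal{F}(\cal{F}(\pi,\co_{((2n_1,1^{2(n-n_1)}),(q_1))},\psi),\cdot,\psi)$ on $M_{X_1}^F$.

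The main obstacle will be the bookkeeping of the quadratic-form datum $(q_i')$. Under the isomorphism $M_{X_1}^F\supset\sp(V_2)^F$, the restriction of the original quadratic form indexed by $(q_i)$ to the complementary orthogonal subspace of the columns of length $2n_1$ gives the form on the remaining Jordan summands. When $d_2<2n_1$, the column of length $2n_1$ is isolated, so only $q_1$ is removed and $(q_i')=(q_2,q_3,\ldots)$. When $d_2=2n_1$, several columns of length $2n_1$ collide: the orthogonal subspace on which the residual form lives merges the corresponding anisotropic lines into a single higher-dimensional quadratic space, and the normalization identity~(\ref{sgn}) together with $\rm{disc}\,Q(k)=(-1)^{k(k-1)/2}\det Q(k)$ produces the sign $(-1)^{\#\{i\mid d_i=2n_1\}}$ in front of the merged class, while the remaining $q_i$ are unchanged. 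This gives exactly the displayed formula for $(q_i')$.
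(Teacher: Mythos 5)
Your proposal follows essentially the same route as the paper's proof: both split the nilpotent datum into the $(2n_1,1^{2(n-n_1)})$ piece and the complementary piece of type $(d_2,\dots,d_s)$, observe that the unipotent group for the compound orbit and the product of the two groups for the iterated coefficient share a common subgroup $C$ and differ only in ``mixed'' root spaces, and then trade one set of mixed root spaces for the other using the non-degenerate pairing $(x,y)\mapsto\psi([x,y])$ supported on the weight-$2$ part of $\fg$. The only substantive difference is that the paper does not invoke Lemma \ref{erl} as a black box but reproves this special case by an explicit weight-by-weight Fourier expansion over the graded pieces $Y_r=Y\cap\fg_r$; your plan of iterating Lemma \ref{erl} over opposite mixed weights is the same mechanism.

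One inaccuracy you should repair: the asserted inclusion $G^{(1)}_{\ge1}\cdot G^{(2)}_{\ge1}\subset G_{\ge1}$ is false in general. A mixed root vector with $H_1$-weight $1$ and $H_2$-weight $d_2-1$ has total $H$-weight $2-d_2$, which is negative as soon as $d_2>2$, so neither of the two unipotent groups contains the other. The correct relationship, which is what Step 1 of the paper's proof establishes, is $G'_{\ge1.5}\cdot G''_{\ge1.5}=CX$ and $G_{\ge1.5}=CY$ for a common $C$: the mixed roots with positive $H_1$-weight but total $H$-weight at most $1$ form $X$, while those with negative $H_1$-weight but total $H$-weight at least $1$ form $Y$; mixed roots satisfying both positivity conditions lie in $C$ and those satisfying neither appear on no side. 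Your subsequent description of the exchange implicitly uses this corrected picture, so the argument goes through, but the containment as written would wrongly suggest that only an enlargement, rather than a genuine two-sided exchange, is required. Finally, for the discriminant bookkeeping when $d_2=2n_1$, the paper likewise only appeals to (\ref{sgn}) and leaves the computation to the reader, so your treatment is at the same level of detail as the original.
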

\begin{proof}
 In our proof, we still need to pick some unipotent subgroups of $\sp_{2n}$ which are similar to $X,Y,A,B,C,D$ as before. We will only give the proof for $d_2<2n_1$, the $d_2=2n_1$ case is similar and will be left to the reader. In the $d_2=2n_1$ case, we can calculate the first term of $(q_i')$ by (\ref{sgn}).

Let $G_{\ge k}$ (resp. $G_{\ge k}'$, $G_{\ge k}''$) be certain subgroups of unipotent subgroup of $\sp_{2n}$ (resp. $\sp_{2n}$, $\sp_{2(n-n_1)}$) corresponding to $\co_{(2n_1,d_2,d_3,\cdots,d_s),(q_i)}$ (resp. $\co_{(2n_1,1^{2(n-n_1)}),(+)}$, $\co_{(d_2,d_3,\cdots,d_s),(q_i')}$) with $k\in\{1,1.5,2\}$.

To prove this lemma, we only need to show
\begin{equation}\label{ex3}
\frac{1}{|G_{\ge 1.5}^F|}\sum_{u\in G_{\ge 1.5}^F}\pi(u)\psi(u^{-1})=\frac{1}{|G_{\ge 1.5}^{\prime F}||G_{\ge 1.5}^{\prime \prime F}|}\sum_{u_2\in G_{\ge 1.5}^{\prime\prime F}}\left(\sum_{u_1\in G_{\ge 1.5}^{\prime F}}\pi(u_1u_2)\psi(u_1^{-1})\right)\psi(u_2^{-1})
\end{equation}

{\bf Step 1}: pick unipotent subgroups $X,Y,A,B,C,D$ as in Exchanging Roots Lemma.

From the structure of $\co_\Gamma$, we can pick a representative element $\gamma=\{H,X,Y\}$ in the orbit of $\frak{sl}_2$-triple corresponding to $\co_\Gamma$ such that
\[
\rm{exp}tH = \rm{diag}(t^{2n_1-1},  \cdots, t^{d_2-1}, \cdots , t^{-(d_2-1)}, \cdots ,  t^{-2n_1+1})\in \sp_{2n}\fq
\]
where the powers of $t$ descend.
There is a Weyl group element $w$ which conjugates $\rm{exp}tH$ to the torus
\[
\begin{aligned}
h(t):&=w(\rm{exp}tH)w^{-1} = \rm{diag}(T,T_0,T').
\end{aligned}
\]
where
\[
T=(t^{2n_1-1}, t^{2n_1-3}, \cdots ,t),\quad T_0=( t^{d_2-1}, \cdots , t^{-(d_2-1)})\textrm{ and } T'=(t^{-1},\cdots,t^{-(2n_1-3)}, t^{-(2n_1-1)})
\]
with the powers of $t$ descending.
Similarly, from the structure of $\co_{(d_2, \cdots ,d_s),(q_i')}$, we can pick a representative element $\gamma''=\{H'',X'',Y''\}$ in the orbit of $\frak{sl}_2$-triple corresponding to $\co_{(d_2, \cdots ,d_s),(q_i')}$ such that
\[
\rm{exp}tH'' = \rm{diag}(t^{d_2-1},  \cdots , t^{-(d_2-1)})\in \sp_{2(n-n_1)}\fq
\]
where the powers of $t$ goes down. Let $\fg_i = \{Z \in \fg | \rm{Ad}(h(t))(Z) = iZ\}$ and
\[
\fg = \bigoplus_{j\in\bb{Z}}\fg_{j}.
\]
Let
\[
\fg_{\ge i}:=\bigoplus_{i\ge j}\fg_{j}\textrm{ and } \fg_{\le i}:=\bigoplus_{i\le j}\fg_{j}.
\]
They are Lie algebras of a close connect subgroup $G_{\ge i}$ and $G_{\le i}$ of $G$, respectively.

To write down $G_{\ge i}^F$ explicitly, we set
\begin{itemize}
 \item $z_i$ is the number of $t^{j}$ in $\rm{exp}tH''$ such that $j> 2n_1-1-2i$;
\item $z'_j$ is the number of $t^{i}$ in $\rm{exp}tH''$ such that $i\ge 2n_1+3-2j$;
\item  $z^1_i$ is the number of $t^{j}$ in $\rm{exp}tH''$ such that $j= 2n_1-2i$.
\item $z^{\prime 1}_j$ is the number of $t^{i}$ in $\rm{exp}tH''$ such that $i= 2n_1+2-2j$.
\end{itemize}
Note that for symplectic group $\sp_{2n}$, nilpotent orbits are parameterizes by all partitions
of $2n$ where odd numbers occur with even multiplicity. So for a even number $i$, the multiplicity of $t^{i}$ occurring in $\rm{exp}tH''$ is even. In particular, $z^1_i$ and $z^{\prime 1}_j$ are even.

We write
\[
G_{\ge1.5}=\left\{\begin{pmatrix}
u&h+e&g\\
&v&h^*+e^*\\
&&u^*
\end{pmatrix}
\begin{pmatrix}
I&&\\
k+f&I&\\
&k^*+f^*&I
\end{pmatrix}\right\}
\]
where
\begin{itemize}
 \item $h$ runs over $M_{\ge2}:=\{(\alpha_{ij})_{n_i\times2(n-n_1)}|\alpha_{ij}=0\textrm{ if }j\le z_i\}$;
\item $u$ runs over the upper triangle matrixes of $\GGL_{n_1}$;
\item  $g$ runs over $M=\{(g_{ij})_{n_1\times n_1}\}$;
\item $v$ runs over $G_{\ge1.5}''$;
\item $k$ runs over $ M_{\ge2}':=\{(\alpha_{ij})_{ 2(n-n_1)\times n_1}|\alpha_{ij}=0\textrm{ if }i> z'_{j}\}$
\item $e $ runs over $ M_1=\{(\alpha_{ij})_{n_1\times 2(n-n_1)}|\alpha_{ij}=0\textrm{ if }j\le \frac{z^1_i}{2}\textrm{ or }j>z_i\}$;
\item $f$ runs over $ M_1'=\{(\alpha_{ij})_{ 2(n-n_1)\times n_1}|\alpha_{ij}=0\textrm{ if }i> (\frac{z^{\prime 1}_{j}}{2}+z^{\prime }_{j})\textrm{ or }i\le z^{\prime }_{j}\}$.
\end{itemize}
For $\co_{(2n_1,1^{2(n-n_1)})}$, we have
\[
G'_{\ge1.5}=\left\{\begin{pmatrix}
u&h'&g\\
&I&h^{\prime*}\\
&&u^*
\end{pmatrix}\right\}
=
\left\{\begin{pmatrix}
u&h+e&g\\
&I&h^*+e^*\\
&&u^*
\end{pmatrix}
\begin{pmatrix}
I&d&\\
&I&d^{*}\\
&&I
\end{pmatrix}
\right\}
\]
where
\begin{itemize}

\item $h'$ runs over $\{(\alpha_{ij})|\alpha_{n_1j}=0\textrm{ if }j>n+n_1\}$;

\item $d$ runs over $\{(\alpha_{ij})|\alpha_{ij}=0 \textrm{ if }j> \frac{z^1_i}{2}\}$.
 \end{itemize}

We set
\[
C:=\left\{\begin{pmatrix}
u&h+e&g\\
&v&h^*+e^*\\
&&u^*
\end{pmatrix}\right\},\quad X:=\left\{\begin{pmatrix}
I&d&\\
&I&d^{ *}\\
&&I
\end{pmatrix}
\right\}
\textrm{ and }
Y:=\left\{\begin{pmatrix}
I&&\\
k+f&I&\\
&k^*+f^*&I
\end{pmatrix}\right\},
\]
and let $\frak{c}$, $\frak{x}$ and $\frak{y}$ be their Lie algebra, respectively. Let
\[
B:=CY,\ D:=CX\textrm{ and }A:=CXY.
\]
Note that
 $(Y \cap C)\verb|\|Y=I\textrm{ and }(X \cap C)\verb|\|X=I$.

{\bf Step 2}: prove (\ref{ex3}).

To prove it, we would like to show that both side of (\ref{ex3}) is equal to
\[
\frac{1}{| C^F||X^F||Y^F|}\sum_{u\in C^F}\sum_{x\in X^F}\sum_{y\in Y^F}\pi(uyx)\psi(u^{-1}).
\]
 We set unipotent subgroups $X_r$ and $Y_r$ and their Lie algebra $\frak{x}_r$ and $\frak{y}_r$ as follow:
\[
X_r=X\cap (G_{\le2-r}\verb|\|G_{<2-r})\textrm { and }
Y_r=Y\cap (G_{\ge r}\verb|\|G_{> r}).
\]
In other words,
\[
\frak{x}_r=\fg_{2-r}\cap\frak{x} \textrm{ and }\frak{y}_r=\fg_{r}\cap\frak{y}.
\]
Note that $X$ and $Y$ are abelian, we have
\[
Y=\prod_{r=1}^{m}Y_r
\textrm{ and }
X=\prod_{r=1}^{m}X_r.
\]
To prove (\ref{ex3}), our strategy is to write the Fourier expansion of both side of (\ref{ex3}) on $X_r^F$ and $Y_r^F$ for $1\le r\le m$, respectively.

We now turn to describe elements in $\frak{x}_r$ and $\frak{y}_r$ explicitly.
Let
$\beta_r$ be the power of $t$ in the $r$-th term of $w(\rm{exp}tH)w^{-1}$ and $\delta(r,i)=\frac{2n_1-\beta_i+r-1}{2}$. Let
\[
\cal{X}_r:=\{(i,j)|e_{ij}\in \mathfrak{x}_r\}\textrm{ and }\cal{Y}_r:=\{(i,j)|e_{ij}\in \frak{y}_r\}.
\]
To be more precise, we have
\[
\cal{X}_r=\{(i,j)|\ n_1< j\le n,\ i=\delta(r,j)\textrm{ and }i<n_1\}
\]
and
\[
\cal{Y}_r=\{(i,j)|\ n_1< i\le n,\ j=\delta(r,i)+1\textrm{ and }j\le n_1\}.
\]
Then $x\in X_r $ (resp. $y\in Y_r $) if and only if
\[
x_r=I+\sum_{(i,j)\in\cal{X}_r} \alpha_{i,j}e_{ij} \ \textrm{ (resp. } y_r=I+\sum_{(i,j)\in\cal{Y}_r} \alpha_{i,j}'e_{ij}\textrm{)}
\]
for some $\{\alpha_{i,j}\}$ (resp. $\{\alpha'_{i,j}\}$).
By direct calculation, we have $[X_r,Y_r]\subset G_{> 2}\verb|\|G_{\ge 2}$, and $[X_r,Y_{r'}]\subset G_{> 2}$ for $r'>r$. So
$
\psi([x_r,y_r'])=1
$
and
\[
\psi([x_r,y_r])=\psi(\sum_{j=n_1+1}^{n}\alpha_{\delta(r,j),j}\cdot\alpha'_{j,\delta(r,j)+1}).
\]
Therefore the pairing $I\verb|\|X_r^F \times I\verb|\|Y_r^F \to \bb{C}^\times$, given by
\[
(x_r, y_r) \to \psi([x_r, y_r]),
\]
is multiplicative in each coordinate, non-degenerate, and identifies $I\verb|\|Y_r^F$
with the dual of $I\verb|\|X_r^F$ and $I\verb|\|X_r^F$ with the dual of $I\verb|\|Y_r^F$.

Since $G_{\ge 1.5}^{\prime\prime }\cdot G_{\ge 1.5}^{\prime }=CX$, we set
\[
\begin{aligned}
\phi_{m+1}(g):=&\frac{1}{|G_{\ge 1.5}^{\prime F}||G_{\ge 1.5}^{\prime\prime F}|}\sum_{u_2\in G_{\ge 1.5}^{\prime\prime F}}\left(\sum_{u_1\in G_{\ge 1.5}^{\prime F}}\pi(u_1u_2g)\psi(u_1^{-1})\right)\psi(u_2^{-1})\\
=&\frac{1}{|C^F||X^F|}\sum_{u\in C^F}\sum_{x\in X^F}\pi(uxg)\psi(u^{-1})
\end{aligned}
\]
and inductively we define
\[
\phi_{r}(g):=\frac{1}{|Y_r^F|}\sum_{y_{r}\in Y_{r}^F}\phi_{r+1}(y_{r}g).
\]
In particular,
\[
\phi_{1}(1)=\frac{1}{|C^F||X^F||Y^F|}\sum_{u\in C^F}\sum_{x\in X^F}\sum_{y\in Y^F}\pi(uxy)\psi(u^{-1}).
\]
We turn to prove $\frac{1}{|Y_{m'}^F|}\phi_{m'+1}(1)=\phi_{m'}(1)$ for $m'\le m$. Let us write the Fourier expansion of $\phi_{m'+1}$ and evaluate it at 1:
\[
\begin{aligned}
\phi_{m'+1}(1)=&\frac{1}{|Y_{m'}^F|}\sum_{\psi^*\in \widehat{Y_{m'}^F}}\sum_{y_{m'}\in Y_{m'}^F}\phi_{m'+1}(y_{m'})\psi^*(y_{m'}^{-1})\\
=&\frac{1}{Y(m')}\sum_{\psi^*\in \widehat{Y_{m'}^F}}\sum_{y_{m'}\in Y_{m'}^F}\sum_{\mbox{\tiny$\begin{array}{c}y_r\in Y_r^F \\
m'+1\le r\le m\\
\end{array}$}}\sum_{u\in C^F}\sum_{x\in X^F}\pi(uxy_my_{m-1}\cdots y_{m'+1}y_{m'})\psi(u^{-1})\psi^*(y_{m'}^{-1})
\end{aligned}
\]
where $Y(m'):=\prod_{r=m'}^m |Y_r^F|$.
To abbreviate notations, we write $y_{m'}'$ instead of $y_my_{m-1}\cdots y_{m'+1}$. Using that fact that one can identify $I\verb|\|X_r^F$ with the dual of $I\verb|\|Y_r^F$, for a fixed $\psi^*$, there exist a $x_{m'}\in X_{m'}^F$ such that $\psi^*(y_{m'})=\psi([y_{m'}^{-1},x_{m'}^{-1}])$ for any $y_{m'}\in Y_{m'}^F$. We get
\[
\begin{aligned}
\pi(uxy_{m'}'y_{m'})\psi(u^{-1})\psi^*(y_{m'}^{-1})=\pi(uxy_{m'}'y_{m'})\psi(u^{-1})\psi([x_{m'}^{-1},y_{m'}^{-1}])\\
\end{aligned}
\]
where $x_{m'}^{-1}$ depends on $\psi^*\in \widehat{Y_{m'}^F}$.
Since $\pi$ is a class function and $XC=CX$, consider the sum over $X^F$ and $C^F$, we get
\[
\begin{aligned}
&\sum_{u\in C^F}\sum_{x\in X^F}\pi(uxy_{m'}'y_{m'})\psi(u^{-1})\psi([x_{m'}^{-1},y_{m'}^{-1}])\\
=&\sum_{u\in C^F}\sum_{x\in X^F}\pi(x_{m'}^{-1}uxy_{m'}'y_{m'}x_{m'})\psi(u^{-1})\psi([x_{m'}^{-1},y_{m'}^{-1}])\\
=&\sum_{u\in C^F}\sum_{x\in X^F}\pi(ux_{m'}^{-1}xy_{m'}'y_{m'}x_{m'})\psi(u^{-1})\psi([x_{m'}^{-1},y_{m'}^{-1}])\\
\end{aligned}
\]
Change variable in the sum over $X^F$, $x\to x_{m'}^{-1}x$, then above equation become
\[
\begin{aligned}
&\sum_{u\in C^F}\sum_{x\in X^F}\pi(uxy_{m'}'y_{m'}x_{m'})\psi(u^{-1})\psi([x_{m'}^{-1},y_{m'}^{-1}])\\
\end{aligned}
\]
Now we want to move $x_{m'}$ through $y_{m'}'$ and $y_{m'}$. First, for $y_{m'}$, one has
\[
\begin{aligned}
&\pi(uxy_{m'}'y_{m'}x_{m'})\psi(u^{-1})\psi([x_{m'}^{-1},y_{m'}^{-1}])\\
=&\pi(uxy_{m'}'x_{m'}y_{m'}[y^{-1}_{m'},x^{-1}_{m'}])\psi(u^{-1})\psi([x_{m'}^{-1},y_{m'}^{-1}])\\
=&\pi([y^{-1}_{m'},x^{-1}_{m'}]uxy_{m'}'x_{m'}y_{m'})\psi(u^{-1})\psi([x_{m'}^{-1},y_{m'}^{-1}])\\
\end{aligned}
\]
Change variable in the sum over $C^F$, $u\to [y^{-1}_{m'},x^{-1}_{m'}]u$, we have
\[
\begin{aligned}
&\sum_{u\in C^F}\sum_{x\in X^F}\pi(uxy_{m'}'x_{m'}y_{m'})\psi(u^{-1}[y^{-1}_{m'},x^{-1}_{m'}])\psi([x_{m'}^{-1},y_{m'}^{-1}])=\sum_{u\in C^F}\sum_{x\in X^F}\pi(uxy_{m'}'x_{m'}y_{m'})\psi(u^{-1})\\
\end{aligned}
\]
For $y_{m'}'$, one has
\[
\begin{aligned}
\sum_{y_{m'}\in Y_{m'}^F}\pi(uxy_{m'}'x_{m'}y_{m'})\psi(u^{-1})=\sum_{y_{m'}\in Y_{m'}^F}\pi(uxu'x_{m'}y_{m'}'y_{m'})\psi(u^{-1})\\
\end{aligned}
\]
Note that $u'=[y_{m'}',x_{m'}]\in G^F_{>2}\subset C^F$ and $\psi$ is trivial on $G^F_{>2}$, by changing $u\to uu'$, we get
\[
\begin{aligned}
&\sum_{u\in C^F}\sum_{x\in X^F}\pi(uxu'x_{m'}y_{m'}'y_{m'})\psi(u^{-1})\\
=&\sum_{u\in C^F}\sum_{x\in X^F}\pi(uu'xx_{m'}y_{m'}'y_{m'})\psi(u^{-1})\\
=&\sum_{u\in C^F}\sum_{x\in X^F}\pi(uxx_{m'}y_{m'}'y_{m'})\psi(u^{-1}).
\end{aligned}
\]
Changing $x$ to $xx_{m'}$, above equation become
$
\sum_{u\in C^F}\sum_{x\in X^F}\pi(uxy_{m'}'y_{m'})\psi(u^{-1})
$.
Thus
\[
\begin{aligned}
\phi_{m'+1}(1)=&\frac{1}{|Y_{m'}^F|}\sum_{\psi^*\in \widehat{Y_{m'}^F}}\sum_{y_{m'}\in Y_{m'}^F}\phi_{m'+1}(y_{m'})\psi^*(y_{m'}^{-1})\\
=&\frac{1}{Y(m')}\sum_{\psi^*\in \widehat{Y_{m'}^F}}\sum_{y_{m'}\in Y_{m'}^F}\sum_{\mbox{\tiny$\begin{array}{c}y_r\in Y_r^F \\
m'+1\le r\le m\\
\end{array}$}}\sum_{u\in C^F}\sum_{x\in X^F}\pi(uxy_my_{m-1}\cdots y_{m'+1}y_{m'})\psi(u^{-1})\\
=&\frac{1}{Y(m'+1)}\sum_{\mbox{\tiny$\begin{array}{c}y_r\in Y_r^F \\
m'\le r\le m\\
\end{array}$}}\sum_{u\in C^F}\sum_{x\in X^F}\pi(uxy_my_{m-1}\cdots y_{m'+1}y_{m'})\psi(u^{-1})\\
=&{|Y_{m'}^F|}\phi_{m'}(1)
\end{aligned}
\]
Hence $\phi_{m+1}(1)={|Y_{m}^F|}\phi_{m}(1)={|Y_{m}^F||Y_{m-1}^F|}\phi_{m-1}(1)=\cdots={|Y^F|}\phi_{1}(1)$, and we have
\begin{equation}\label{ex4}
\begin{aligned}
&\frac{1}{|G_{\ge 1.5}^{\prime F}||G_{\ge 1.5}^{\prime\prime F}|}\sum_{u_2\in G_{\ge 1.5}^{\prime\prime F}}\left(\sum_{u_1\in G_{\ge 1.5}^{\prime F}}\pi(u_1u_2)\psi(u_1^{-1})\right)\psi(u_2^{-1})\\
=&\phi_{m+1}(1)\\
=&\frac{1}{|Y^F|}\phi_{1}(1)\\
=&\frac{1}{|C^F||X^F||Y^F|}\sum_{u\in C^F}\sum_{x\in X^F}\sum_{y\in Y^F}\pi(uxy)\psi(u^{-1})
\end{aligned}
\end{equation}
By exchanging the role of $X$ and $Y$, with same argument, we have
\begin{equation}\label{ex5}
\frac{1}{|G_{\ge 1.5}^F|}\sum_{u\in G_{\ge 1.5}^F}\pi(u)\psi(u^{-1})=\frac{1}{|C^F||X^F||Y^F|}\sum_{u\in C^F}\sum_{x\in X^F}\sum_{y\in Y^F}\pi(uyx)\psi(u^{-1})
\end{equation}
Note that $\pi$ is a class function and $XC=CX$,
\begin{equation}\label{ex6}
\sum_{u\in C^F}\sum_{x\in X^F}\sum_{y\in Y^F}\pi(uyx)\psi(u^{-1})=\sum_{u\in C^F}\sum_{x\in X^F}\sum_{y\in Y^F}\pi(xuy)\psi(u^{-1})=\sum_{u\in C^F}\sum_{x\in X^F}\sum_{y\in Y^F}\pi(uxy)\psi(u^{-1}).
\end{equation}
Then (\ref{ex3}) follows immediately from (\ref{ex4}), (\ref{ex5}) and (\ref{ex6}).
\end{proof}
\section{Proof of main results}\label{sec8}
\subsection{Wavefront sets of representations in Case (A)}
Let $G$ be a symplectic group over $\Fq$, and let $\pi\in \cal{E}(G^F,s)$. In this section, we consider the Case (A) in Section \ref{sec6}, i.e. we assume that $s$ has no eigenvalues $\pm1$. The goal of this section is to prove that we can get the wavefront set of $\pi$ by the descent method. Recall that for an irreducible representation $\pi$, we can
associate it with an array $\hat{\ell}{(\pi)}$ in Section \ref{sec5.4}. Furthermore, by Corollary \ref{da}, $\hat{\ell}{(\pi)}$ is a partition if $\pi$ is in Case (A).

\begin{theorem}\label{am} Let $G$ be a symplectic group over $\Fq$, and let $\pi\in \cal{E}(G^F,s)$. Assume that $s$ has no eigenvalues $\pm1$. Let $\wco$ be the $F$-stable nilpotent orbit corresponding to $\hat{\ell}{}{}(\pi)$. Then for every $F$-rational nilpotent orbit $\co_\Gamma\subset \wco$, we have
\[
\rm{dim}\rm{Hom}_{G_{\ge 1}^F}(\pi,\omega_{\psi })=1
\]
where $G_{\ge 1}$ is certain unipotent subgroup of $G$ corresponding to $\co_\Gamma$.
Moreover, if $C_{G^{*F}}(s)$ is a product of general linear groups, then $\wco$ is the wavefront set of $\pi$.
\end{theorem}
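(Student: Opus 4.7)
The plan is to prove both assertions simultaneously by induction on $n$, using the composition structure of Fourier coefficients (Lemma \ref{erL3}) together with the explicit form of the descent in Case (A) (Theorems \ref{a1} and \ref{d1}). Writing $\hat{\ell}(\pi)=(2\ell_1,\ldots,2\ell_k)$ and $\pi':=\CD^{\rm{FJ}}(\pi)\in\rm{Irr}(\sp_{2(n-\ell_1)}\fq)$, Theorem \ref{d1} and Corollary \ref{da} guarantee that $\pi'$ is again in Case (A), has descent partition $\hat{\ell}(\pi')=(2\ell_2,\ldots,2\ell_k)$, and if $C_{G^{*F}}(s)$ is a product of general linear groups, so is $C_{G^{\prime *F}}(s')$; thus the inductive hypothesis is applicable to $\pi'$.

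For the multiplicity-one statement, I would fix any $F$-rational orbit $\co_\Gamma\subset\wco_{\hat{\ell}(\pi)}$ with $\Gamma=((2\ell_1,\ldots,2\ell_k),(q_i))$. Since every part of $\hat{\ell}(\pi)$ is even and $2\ell_1$ is its largest, Lemma \ref{erL3} yields
\[
\dim\cal{F}(\pi,\co_\Gamma,\psi)=\dim\cal{F}\bigl(\cal{F}(\pi,\co_{((2\ell_1,1^{2(n-\ell_1)}),(q_1))},\psi),\co_{\Gamma'},\psi\bigr),
\]
where $\co_{\Gamma'}\subset\wco_{\hat{\ell}(\pi')}$ is the residual $F$-rational orbit in $\sp_{2(n-\ell_1)}$. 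Depending on the square class of $q_1$, the inner Fourier coefficient is $\CQ^{\rm{FJ}}_{\ell_1,\psi}(\pi)$ or $\CQ^{\rm{FJ}}_{\ell_1,\psi''}(\pi)$ for $\psi''\notin\psi\cdot(\Fq^\times)^2$; by Theorem \ref{a1}(ii) both descents coincide with the irreducible representation $\pi'$ with multiplicity one, because in Case (A) the descent is independent of the additive character. The right-hand side reduces to $\dim\cal{F}(\pi',\co_{\Gamma'},\psi)$, which equals $1$ by the inductive hypothesis.

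For the wavefront set assertion, now assume $C_{G^{*F}}(s)$ is a product of general linear groups, and suppose $\wco_\mu$ supports $\pi$ with $\mu\ge\hat{\ell}(\pi)$ in the dominance order (equivalently, in the closure order on $\sp_{2n}$-orbits); I will show $\mu=\hat{\ell}(\pi)$. If $\mu_1=2m$ is even, invoking Lemma \ref{erL3} along any supported $\co_\Gamma\subset\wco_\mu$ shows $\CQ^{\rm{FJ}}_{m,\psi}(\pi)\ne 0$ for some $\psi$, forcing $m\le\ell_0^{\rm{FJ}}(\pi)=\ell_1$; combined with $2m\ge 2\ell_1$ this gives $m=\ell_1$. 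If $\mu_1=2r+1$ is odd, the even-multiplicity constraint on odd parts forces $\mu=((2r+1)^2,d_2,\ldots)$ with $d_i\le 2r+1$, and from $2r+1\ge 2\ell_1$ one has $r+1>\ell_1$; Proposition \ref{prop6.6} then gives $\langle\pi,\rm{Ind}^{G^F}_{P^F}(\sigma\otimes\tau)\rangle=0$ for every generic $\sigma$ of $\GGL_{r+1}\fq$, so Lemma \ref{ex2} produces an orbit $\wco_{(2m,1^{2(n-m)})}$ supporting $\pi$ with $m>r\ge\ell_1$, contradicting the even-case bound.

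Once $\mu_1=2\ell_1$ is established, Lemma \ref{erL3} transfers the support of $\pi$ on $\wco_\mu$ to support of $\pi'$ on the residual $F$-stable orbit $\wco_{(\mu_2,\mu_3,\ldots)}$ in $\sp_{2(n-\ell_1)}$, and the inductive hypothesis forces $(\mu_2,\mu_3,\ldots)=\hat{\ell}(\pi')=(2\ell_2,\ldots)$; thus $\mu=\hat{\ell}(\pi)$. This makes $\wco_{\hat{\ell}(\pi)}$ maximal in closure order among $F$-stable orbits supporting $\pi$, and combined with Lusztig's uniqueness of the wavefront set (available since $p$ is sufficiently large) it is identified as the wavefront set of $\pi$. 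The principal technical obstacle is the bookkeeping of the square classes $(q_i)$ and additive characters across iterated descents: switching $\psi$ to $\psi''\notin\psi\cdot(\Fq^\times)^2$ toggles $q_1$ via (\ref{5.2})--(\ref{5.3}), and it is precisely the $\psi$-independence of the Case (A) descent in Theorem \ref{a1}(ii) that makes the multiplicity and maximality arguments pass compatibly through the induction, a feature that fails in Cases (B) and (C).
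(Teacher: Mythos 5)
Your proposal is correct and follows essentially the same route as the paper's proof: induction on $n$, Lemma \ref{erL3} to peel off the first part and reduce to the descent $\pi'$, the first-occurrence index to bound even first parts, and Proposition \ref{prop6.6} together with Lemma \ref{ex2} to exclude odd first parts, with the $\psi$-independence of the Case (A) descent handling the square classes. Your treatment is, if anything, slightly more explicit than the paper's about the square-class bookkeeping and about passing from closure-maximality to the wavefront set via Lusztig's uniqueness.
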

\begin{proof}
We prove the proposition by induction on $n$.

In Section \ref{sec6.1}, we know that the descent of $\pi$ does not depend on the choice of character $\psi$. By (\ref{lfd}), (\ref{bd}), (\ref{5.3}) and Theorem \ref{d1}, for every $F$-rational orbit
\[
\co_{\Gamma_1}\subset\wco_{(\hat{\ell}{}{(\pi)}_1,1^{2n-\hat{\ell}{}{(\pi)}_1})}=\wco_{(2\ell_0^{\rm{FJ}}(\pi),1^{2n-2\ell_0^{\rm{FJ}}(\pi)})},
 \]
 $\pi$ has a nonzero Fourier
coefficient with respect to any choices of character $\psi$
and $\pi$ has no nontrivial Fourier
coefficient on $\forall\ \co_{\Gamma'}\subset\wco_{(\hat{\ell}{}{(\pi)}_1+2i,1^{2n-\hat{\ell}{}{(\pi)}_1-2i})}$ with $i>0$.

Assume that $C_{G^{*F}}(s)$ is a product of general linear groups. If $\pi$ has a nonzero Fourier
coefficient on certain $F$-rational nilpotent orbit in the $F$-stable nilpotent orbit $\wco_{((\hat{\ell}{}{(\pi)}_1+2i+1)^2,d_1,d_2,\cdots)}$ with $i\ge 0$, then by Proposition \ref{prop6.6} and Lemma \ref{ex2}, we conclude that $\pi$ has a nonzero Fourier
coefficient on certain $F$-rational nilpotent orbit in $\wco_{(\hat{\ell}{}{(\pi)}_1+2i+2,1^{2n-\hat{\ell}{}{(\pi)}_1-2i-2})}$, which is impossible. So $\pi$ has no nontrivial Fourier
coefficient on any $F$-rational nilpotent orbits in $\wco_{((\hat{\ell}{}{(\pi)}_1+2i+1)^2,1^{2n-2\hat{\ell}{}{(\pi)}_1-4i-2})}$. Hence the first part of the wavefront set of $\pi$ have to be $\hat{\ell}{}{(\pi)}_1=2\ell_0^{\rm{FJ}}(\pi)$.
 By Lemma \ref{erL3},
 \[
 \co_{\Gamma}=\co_{((\hat{\ell}{}{(\pi)}_1,d_2,d_3,\cdots),(q_i))}\subset\wco_{(\hat{\ell}{}{(\pi)}_1,d_2,d_3,\cdots)}
  \]
  supports $\pi$ if and only if
  \[
  \co_{((\hat{\ell}{}{(\pi)}_1,1^{2n-2\lambda[a]_1}),(q_1))}\circ\co_{((d_2,d_3,\cdots),(q_i'))}
   \]
   supports $\pi$. In other words, we need check whether $\co_{((d_2,d_3,\cdots),(q_i'))}$ supports $\CD^\rm{FJ}_{\psi}(\pi)$. It is easy to see that
   \[
   \wco_{((\hat{\ell}{}{(\pi)}_1),d_2,d_3,\cdots)}\ge \wco_{((\hat{\ell}{}{(\pi)}_1),d_2',d_3',\cdots)}
   \]
     if and only if
     \[
     \wco_{(d_2,d_3,\cdots)}\ge\wco_{(d_2',d_3',\cdots)}.
     \]
So if $\wco_{((\hat{\ell}{}{(\pi)}_1),d_2,d_3,\cdots)}$
      is the wavefront set of $\pi$, then $\wco_{(d_2,d_3,\cdots)}$ is the wavefront set of $\CD^\rm{FJ}_{\psi}(\pi)$. According to Theorem \ref{d1}, in Case (A), $\CD^\rm{FJ}_{\psi}(\pi)$ is also an irreducible representation in Case (A). Moreover, assume that $\CD^\rm{FJ}_{\psi}(\pi)=\pi'\in\cal{E}(G^{\prime F},s')$, the centralizer $C_{G^{\prime *F}}(s')$ is also a product of general linear groups. By induction hypothesis, the wavefront set of $\CD^\rm{FJ}_{\psi}(\pi)$ is corresponding to $\hat{\ell}{}{(\CD^\rm{FJ}_{\psi}(\pi))}=(\hat{\ell}{}{(\pi)}_2,\hat{\ell}{}{(\pi)}_3,\cdots)$, which completes the proof. The multiplicity 1 result immediately follows from Lemma \ref{erL3} and our induction hypothesis.

     Assume that $C_{G^{*F}}(s)$ is not a product of general linear groups. Applying Lemma \ref{erL3} and with the same argument of composing nilpotent orbits, we can still get
    $
\rm{dim}\rm{Hom}_{G_{\ge 1}^F}(\pi,\omega_{\psi })=1,
$
     although one can not conclude that $\wco$ is the maximal orbit support $\pi$.
\end{proof}

\subsection{Wavefront sets of representations in Case (B)}
Assuming that the characteristic $p$ of $\Fq$ is suffice large, and $G$ is a split connected reductive group $G$ defined over $\Fq$.
For every irreducible character $\pi$ of $G^F $, there is
a unique unipotent class $\co$ in $G$ which has the property:
 \begin{itemize}
 \item[] (1)  $\sum_{u\in\co^F}\pi(u)\ne 0$;
\item[] (2) $\sum_{u\in\co^{\prime F}}\pi(u)\ne 0$ implies $\rm{dim}\co^{\prime F}\le\rm{dim}\co^{ F}$ with $\co'$ any $F$-stable unipotent
class.
\end{itemize}
The class $\co$ is called the unipotent support of $\pi$ (see \cite{L6}, \cite{GM}, \cite{AA}).
In particular, for an unipotent representation $\pl$ of $\sp_{2n}\fq$, we can get unipotent support $\co$ of $\pl$ as follow (see \cite[Section 4]{AKP}):
 \begin{itemize}\label{us}
 \item Pick a special symbol $\cal{Z}$
which contains the same entries with
the same multiplicities as symbol $\Lambda$.
\item Pick a bipartition $(\eta,\zeta)=(\eta_1,\cdots,\eta_k,\zeta_1,\cdots,\zeta_r)$ by $\Upsilon(\cal{Z})$. We
ensure that $k=r=m$ have by removing zeros as parts where necessary.
\item Pick the unique partition $\lambda$ such that $\psi(\lambda)=(\eta,\zeta)$ where $\psi$ is defined in (\ref{p1});
\item Pick the unipotent orbit $\co$ corresponding to partition $\lambda$.
\end{itemize}

Lusztig has proved in \cite[Theorem 11.2]{L7}, under the assumption that $p$
is large enough, that the closure of the unipotent support of $\pi$ coincides with
the wavefront set defined by Kawanaka of its Alvis-Curtis dual. In summary, we have the following theorem.

\begin{theorem}\label{u9}
Assuming that the characteristic $p$ of $\Fq$ is suffice large. Let $\pl\in\rm{Irr}(\sp_{2n}\fq)$ and $\lambda$ be a partition defined as above. Then the wavefront set of $\pi$ is $\co_{\lambda^t}$.
\end{theorem}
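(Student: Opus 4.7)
The plan is to apply Lusztig's Theorem 11.2 in \cite{L7} directly: under the large-$p$ assumption, the closure of the Kawanaka wavefront set $\wco$ of $\pi_\Lambda$ coincides with the closure of the unipotent support of the Alvis--Curtis dual $D(\pi_\Lambda)$. Equivalently, one can reformulate this (via the refinements of \cite{AA,T}) as $\overline{\wco(\pi_\Lambda)} = d\bigl(\overline{\co(\pi_\Lambda)}\bigr)$, where $\co(\pi_\Lambda)$ is the unipotent support of $\pi_\Lambda$ and $d$ is Lusztig--Spaltenstein duality on nilpotent orbits of $\sp_{2n}$. This identification is what reduces the theorem to a purely combinatorial statement about partitions.

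By the construction recalled just above the statement of Theorem \ref{u9}, the unipotent support of $\pi_\Lambda$ is $\co_\lambda$, where $\lambda$ is the partition obtained from the bipartition $(\eta,\zeta)=\Upsilon(\cal{Z})$ by inverting the map $\psi$ of (\ref{p1}), and $\cal{Z}$ is the special symbol in the same family as $\Lambda$. The definition of $\psi$ forces $\lambda$ to be a special symplectic partition of $2n$: the parity constraints built into the special symbol ensure that the odd parts of $\lambda$ occur with even multiplicity, so $\co_\lambda$ is indeed an $F$-stable orbit in $\sp_{2n}$.

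The last step is the combinatorial identity $d(\co_\lambda) = \co_{\lambda^t}$. For the symplectic Lie algebra, Lusztig--Spaltenstein duality is given by $d(\co_\mu) = \co_{(\mu^t)_C}$, where $(\cdot)_C$ denotes the $C$-collapse. The key observation is that, because $\lambda$ arises from the special symbol procedure, $\lambda^t$ is automatically a valid symplectic partition (its odd parts also occur with even multiplicity), so the collapse is trivial and $d(\co_\lambda) = \co_{\lambda^t}$. Combining this with the reformulated Lusztig theorem immediately yields $\wco = \co_{\lambda^t}$.

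The main obstacle is the combinatorial verification that $(\lambda^t)_C = \lambda^t$, i.e.\ that the partition produced by the special-symbol-to-bipartition-to-partition recipe has the property that its transpose already satisfies the $C$-type parity condition. This is precisely the point where the special symbol structure (rather than merely working with an arbitrary partition of $2n$) is essential: one must trace through how the alternating inequalities $a_1 \ge b_1 \ge a_2 \ge b_2 \ge \cdots$ in the special symbol propagate through $\Upsilon$ and $\psi^{-1}$ to enforce the required parity on both $\lambda$ and $\lambda^t$. This parity check, together with invoking \cite[Theorem 11.2]{L7}, completes the proof.
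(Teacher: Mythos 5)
Your proposal is correct and follows essentially the same route as the paper: the paper also obtains this statement directly from Lusztig's \cite[Theorem 11.2]{L7} combined with the unipotent-support recipe of \cite{AKP} recalled just before the theorem, treating the passage from $\co_\lambda$ to $\co_{\lambda^t}$ as the effect of Alvis--Curtis/Lusztig--Spaltenstein duality. Your additional verification that the $C$-collapse is trivial (i.e.\ that $\lambda^t$ is again a symplectic partition because $\lambda$ is special) is a correct elaboration of a step the paper leaves implicit, not a different argument.
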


As in the Section \ref{sec6.2}, in this section, we write $\cal{D}^{\rm{FJ}}$ instead of $\cal{D}^{\rm{FJ}}_\psi$.

\begin{theorem}
Let $\pl\in\rm{Irr}(\sp_{2n}\fq)$ and $\widetilde{\ell}{}$ be the descent partition of $\pi$. Then $\co_{\widetilde{\ell}{}}$ is the wavefront set of $\pl$.
\end{theorem}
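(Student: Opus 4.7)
The plan is to proceed by induction on $n$, handling simultaneously the two sub-cases of Case (B): irreducible unipotent $\pl$ and irreducible $\theta$-representation $\pi_{\Lambda'}$. This is forced on us, because by Corollary \ref{u3} the descent $\cal{D}^{\rm{FJ}}(\pl)$ is a $\theta$-representation and $\cal{D}^{\rm{FJ}}(\pi_{\Lambda'})$ is unipotent, so each inductive step toggles between the two. The strategy is to reduce the theorem to a purely combinatorial identification: the transpose of the partition $\lambda$ extracted from the special symbol $\cal{Z}$ (via the recipe preceding Theorem \ref{u9}, i.e.\ $\cal{Z}\mapsto\Upsilon(\cal{Z})\mapsto\psi^{-1}$) equals the descent partition $\widetilde{\ell}(\pl)$ computed via Theorem \ref{u5} and the modification of Section \ref{sec6.2}.

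First I would prove that the first part of $\widetilde{\ell}(\pl)$ agrees with $\lambda^t_1$. Theorem \ref{u5} expresses $\ell_0^{\rm{FJ}}(\pl)$ as the maximum of $(\Upsilon(\Lambda)^*)^t_1-(k+1)$ and $(\Upsilon(\Lambda)_*)^t_1+k$ (with $k=\frac{|\rm{def}(\Lambda)|-1}{2}$), and Proposition \ref{u5.5} shows that the correction $\widetilde{\ell}_1-2\ell_1\in\{-1,0,+1\}$ is governed exactly by whether $\rm{rank}(\cal{DG}^+_{\rm{un}}(\Lambda))$ and $\rm{rank}(\cal{DG}^-_{\rm{un}}(\Lambda))$ are equal or unequal, which in turn is equivalent to the two entries defining the maximum coinciding. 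A direct combinatorial check with the special symbol $\cal{Z}$ (which symmetrizes the two rows of $\Lambda$ into a shape of defect $0$ or $1$) shows this maximum, with the $\pm1$ modification, is exactly $\lambda^t_1$.

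Next, to conclude that $\wco_{\widetilde{\ell}(\pl)}$ is in fact the wavefront set and not merely supports $\pl$, I would combine descent non-vanishing with an exclusion argument analogous to the proof of Theorem \ref{am}. The lower bound $\widetilde{\ell}_1$ comes from Lemma \ref{erL3} applied to the nontrivial Fourier coefficient on $\co_{(2\ell_0,1^{2(n-\ell_0)}),(\pm)}$. For the upper bound I would rule out orbits attached to $((2r+1)^2,d_2,\ldots)$ with $2r+1>\widetilde{\ell}_1$ using Lemma \ref{ex2}: its generic-induction vanishing hypothesis is controlled by the analogue of Proposition \ref{prop6.6} combined with Corollary \ref{u2}, which says no irreducible component of a descent from $\pl$ with larger first-occurrence index exists. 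Orbits $(2r,1^{2(n-r)})$ with $2r>\widetilde{\ell}_1$ are excluded directly by the definition of the first occurrence index and (\ref{5.3}). Composing nilpotent orbits via Lemma \ref{erL3} then reduces every support question for orbits of shape $(\widetilde{\ell}_1,d_2,d_3,\ldots)$ to the analogous question for $\cal{D}^{\rm{FJ}}(\pl)$ on the partition $(d_2,d_3,\ldots)$, to which the inductive hypothesis applies (the descent lives on a symplectic group of strictly smaller rank, and its descent partition is, by the second stage of Theorem \ref{u5} and the formulas of Section \ref{sec6.2}, exactly the tail $(\widetilde{\ell}_2,\widetilde{\ell}_3,\ldots)$).

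The main obstacle will be the third step: matching the iterated descent sequence with the specific combinatorial procedure behind Theorem \ref{u9}. The subtlety is that the places where the $\pm1$ correction $\widetilde{\ell}_i-2\ell_i\ne 0$ occurs are exactly the indices where $\rm{rank}(\cal{DG}^+)=\rm{rank}(\cal{DG}^-)$, and these correspond, on the special-symbol side, to the pairs of equal columns of $\lambda^t$ that split into one even and one odd entry under the bipartition decomposition $\psi$ of (\ref{p1}). Verifying this case-by-case across the four variants of $\cal{DG}^\pm_{\rm{un}}$ and $\cal{DG}^\pm_{\theta}$ and across $\rm{def}(\Lambda)>0$ versus $\rm{def}(\Lambda)<0$, and checking that the alternation between unipotent and $\theta$-representation along the descent sequence preserves the special symbol in the required sense (so that Theorem \ref{u9} can be used consistently at each step), is the technical core of the proof.
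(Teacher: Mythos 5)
Your overall architecture diverges from the paper's in a way that creates a genuine gap. The paper does \emph{not} prove that $\wco_{\widetilde{\ell}}$ is the wavefront set by a descent-plus-exclusion argument as in Theorem \ref{am}; it takes the wavefront set as already known from Lusztig's result (Theorem \ref{u9}), namely $\co_{\lambda^t}$ with $\lambda$ read off the special symbol, and the entire content of the proof is the combinatorial identification $\lambda^t=\widetilde{\ell}(\pi_\Lambda)$ together with an inductive matching of the tails. Your second paragraph attempts instead to re-derive the wavefront set directly, and this is where the argument breaks. When $\ell_1=\ell_2-1$ the first two parts of $\widetilde{\ell}$ are the odd pair $((2\ell_1+1)^2)$, and Lemma \ref{erL3} only composes Fourier coefficients through orbits of shape $(2n_1,d_2,\dots)$ with an \emph{even} dominant first part $2n_1\ge d_i$; it can neither produce a nontrivial Fourier coefficient on $\wco_{((2\ell_1+1)^2,\dots)}$ from the descent data nor reduce such a coefficient to the descent. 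So your "lower bound" step has no mechanism in exactly the case that distinguishes Case (B) from Case (A). Moreover, your exclusion of orbits $((2r+1)^2,d_2,\dots)$ via Lemma \ref{ex2} requires the vanishing of $\langle\pi,\rm{Ind}_{P^F}^{G^F}(\sigma\otimes\tau)\rangle$ for generic $\sigma$ of $\GGL_{r+1}\fq$; the only tool the paper provides for this is Proposition \ref{prop6.6}, which is proved only for Case (A) with $C_{G^{*F}}(s)$ a product of general linear groups. An "analogue for unipotent representations combined with Corollary \ref{u2}" is asserted but nowhere established, and it is not a formal consequence of the multiplicity formulas you cite.

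Finally, the step you defer as "the technical core" — matching the iterated descent indices, with their $\pm1$ corrections, against the partition extracted from the special symbol — is precisely what the paper's proof consists of, so the proposal omits the substance of the argument. For reference, the paper's route is: reduce to the unipotent case (every irreducible $\theta$-representation arises as a descent of a unipotent one); split according to Proposition \ref{u5.5} into $\ell_1\ge\ell_2$ and $\ell_1=\ell_2-1$; in the first case verify $\lambda^t_1=2(k+m)=\widetilde{\ell}_1$ from the special symbol and then use Lemma \ref{erL3} plus induction applied to $\cal{D}^{\rm{FJ}}(\pi_\Lambda)$; in the second case read off $(\lambda^t_1,\lambda^t_2)=((2(k+m)+1)^2)$ directly from the special symbol, exhibit the explicit symbol $\Lambda^*$ of the representation \emph{two} steps down the descent sequence, and apply induction to it. To repair your proposal you would need either to carry out this combinatorial identification in full (in which case the exclusion machinery of your second paragraph is unnecessary, since Theorem \ref{u9} already pins down the wavefront set), or to supply both a replacement for Lemma \ref{erL3} valid for odd leading parts and a proof of the generic-induction vanishing for unipotent and $\theta$-representations.
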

\begin{proof}
We only give the proof for unipotent representation $\pl$. Since the descent of an irreducible unipotent representation is a $\theta$-representation and every irreducible $\theta$-representation coincides with the descent of certain irreducible unipotent representations, we can deduce the $\theta$-representation case from the unipotent case.

Let $\lambda^t$ be the partition given in Theorem \ref{u9} corresponding to $\pl$ and $\hat{\ell}{(\pl)}=(\ell_1,\ell_2,\cdots)$ be the largest descent index.
By Proposition \ref{u5.5}, there are two possibilities:
 \begin{itemize}\label{us}
 \item[] Case (1): $\ell_1\ge\ell_2$. In this case, by Proposition \ref{u7}, we have $\ell_1\ge\ell_i$ for $i> 1$. Moreover, $\cal{D}^{\rm{FJ}}(\pl)$ is irreducible;
 \item[] Case (2): $\ell_1=\ell_2-1$.
\end{itemize}
We only prove the $\rm{def}(\Lambda)>0$ case, the proof of the $\rm{def}(\Lambda)<0$ case is similar.
Assume that $\rm{def}(\Lambda)=2k+1$ and
\[
\Lambda=\begin{pmatrix}
a_1,a_2,\cdots,a_{m+2k+1}\\
b_1,b_2,\cdots,b_{m}
\end{pmatrix}.
\]
Let $\cal{Z}$ be the special symbol containing the same entries with
the same multiplicities as symbol $\Lambda$. We have
\[
\cal{Z}=\begin{pmatrix}
a'_1,&a'_2,&\cdots,&a'_{m+k},&a'_{m+k+1}\\
b'_1,&b'_2,&\cdots,&b'_{m+k}&
\end{pmatrix}
\]
with $a'_{m+k}>0$, $b'_{m+k}>0$. By Proposition \ref{u5.5} (ii), we conclude that
$a'_{m+k+1}=0 $ if $\pl$ is in Case (1) and $a'_{m+k+1}>0 $ otherwise.

 Consider Case (1). Let $(\eta,\zeta)=(\eta_1,\cdots,\eta_h,\zeta_1,\cdots,\zeta_r)$ be the bipartition given by $\Upsilon(\cal{Z})$. We ensure that $h = r=k+m$ by adding zeros as parts where necessary. Note that $\zeta_r=b'_{m+k}> 0$. Hence $\psi^{-1}(\eta,\zeta)=\lambda=(\lambda_1,\cdots,\lambda_{2(k+m)})$. Therefore, the first part of $\lambda^t$ and  wavefront set is $2(k+m)$. On the other hand, by theorem \ref{u5}, we have $\ell_1=\left(\Upsilon(\Lambda\right)_{*})^t_1+k=m+k$. So the first part of the partition corresponding the wavefront set of $\pl$ coincides with $\widetilde{\ell}{}_1$.

We have already known the wavefront set of $\pl$ is $\co_{(2(k+m),\mu_1,\mu_2,\cdots)}=\co_{(2(k+m),\mu)}$. By Lemma \ref{erL3}, $\co_{(2(k+m),\mu)}$ is the the wavefront set of $\pl$ if and only if $\mu$ is the biggest partition such that there exist $F$-rational nilpotent orbits $\co_{(\mu,(q_i'))}$ and $\co_{((2(k+m),1,1,\cdots,1),\epsilon)}$ such that
 \[
 \cal{F}(\cal{F}(\co_{((2(k+m),1,1,\cdots,1),\epsilon)},\psi ),\co_{(\mu,(q_i'))},\psi)\ne 0.
 \]
 In other words, $\mu$ is corresponding to the wavefront set of the irreducible representation $\cal{D}^{\rm{FJ}}(\pl)\in\rm{Irr}(\sp_{2(n-m-k)}\fq)$. By induction on $n$, we conclude that $\co_{(\widetilde{\ell}{}_2,\widetilde{\ell}{}_3,\widetilde{\ell}{}_4,\cdots)}$ is the wavefront set of $\cal{D}^{\rm{FJ}}(\pl)$, which completes the proof of the Case (1).

For Case (2), note that $a'_{m+k+1}>0 $. By Proposition \ref{u5.5} (ii), we have $a_{m+2k+1}>0$ and $b_{m}>0$. With similar process, we get that $\lambda=(\lambda_1,\cdots,\lambda_{2(k+m)+1})$ and $\lambda_{2(k+m+1)}\ge 2$. Then the first two parts of partition $\lambda^t$ is $((2(k+m)+1)^2)$. We write $\lambda^t=((2(k+m)+1)^2,\lambda_1^{* t}, \lambda_2^{* t},\cdots)=((2(k+m)+1)^2,\lambda^{* t})$. Consider the irreducible representation $\pi_{\Lambda^*}$ with
\[
\Lambda^*=\begin{pmatrix}
a_1-1,a_2-1,\cdots,a_{m+2k+1}-1\\
b_1-1,b_2-1,\cdots,b_{m}-1
\end{pmatrix}.
\]
Assume that $a_{m+2k+1}-1$ and $b_{m}-1$ are not both 0.  Then the corresponding special symbol is
\[
\cal{Z}^*=\begin{pmatrix}
a'_1-1,&a'_2-1,&\cdots,&a'_{m+k}-1,&a'_{m+k+1}-1\\
b'_1-1,&b'_2-1,&\cdots,&b'_{m+k}-1&
\end{pmatrix}
\]
and $(\eta^*,\zeta^*)=\Upsilon(\cal{Z})^*=(\eta_1-1,\cdots,\eta_h-1,\zeta_1-1,\cdots,\zeta_r-1)$.
By direct calculation, the wavefront set of $\pi_{\Lambda^*}$ is corresponding to $\lambda^{* t}$.

If $a_{m+2k+1}-1$ and $b_{m}-1$ are both 0, the we consider
\[
\Lambda^*=\begin{pmatrix}
a_1-1,a_2-1,\cdots,a_{m+2k}-1,0\\
b_1-1,b_2-1,\cdots,b_{m-1}-1,0
\end{pmatrix}\sim\begin{pmatrix}
a_1-2,a_2-2,\cdots,a_{m+2k}-2\\
b_1-2,b_2-2,\cdots,b_{m-1}-2
\end{pmatrix}.
\]
The wavefront set of $\pi_{\Lambda^*}$ is also corresponding to $\lambda^{* t}$.

On the descent side, by Theorem \ref{u5} and Proposition \ref{u5.5} (ii), there is a descent sequence of $\pl$ as follow:
\[
\pl=\pi_{\Lambda_1}\xrightarrow{\ell_1}\pi_{\Lambda_2}\xrightarrow{\ell_2=\ell_1+1}\pi_{\Lambda_3}\xrightarrow{\ell_3}\pi_{\Lambda_4}\cdots
\]
with $\ell_1=k+m$ and $\Lambda_3=\Lambda^*$. By induction on $n$, we know that the wavefront set of $\pi_{\Lambda_3}\in\rm{Irr}(\sp_{2(n-m-k)}\fq)$ is corresponding to $\lambda^{* t}=(\widetilde{\ell}{}_3,\widetilde{\ell}{}_4,\cdots)$. Note that $\widetilde{\ell}{}_1=\widetilde{\ell}{}_2=\ell_1+\ell_2=2m+2k+1$ and $\lambda^t=((2(k+m)+1)^2,\lambda^{* t})$. Then $\co_{\widetilde{\ell}{}}$ is the wavefront set of $\pl$.
\end{proof}

In the proof of Theorem \ref{am}, we get the multiplicity 1 result by using the fact that the descent of an irreducible representation in Case (A) is also an irreducible representation in Case (A). With similar argument, we get following multiplicity 1 result for Case (B):

\begin{theorem}\label{am2} Let $G=\sp_{2n}$ be a symplectic group over $\Fq$, and let $\pl$ be an irreducible unipotent (resp, $\theta$-) representation of $\sp_{2n}$. Let $\cal{O}$ be the nilpotent orbit corresponding to $\widetilde{\ell}{}$. Let
\[
\pl=\pi_1\xrightarrow{\ell_1}\pi_{2}\xrightarrow{\ell_2}\pi_{3}\xrightarrow{\ell_3}\cdots.
\]
If in each step, $\pi_{i+1}=\cal{D}^{\rm{FJ}}(\pi_i)$, i.e. $\cal{D}^{\rm{FJ}}(\pi_i)$ is irreducible, then
 we have
\[
\rm{dim}\rm{Hom}_{G_{\ge 1}^F}(\pi,\omega_{\psi })=1
\]
where $G_{\ge 1}$ is certain unipotent subgroup of $G$ corresponding to certain $F$-rational nilpotent orbit $\co_\Gamma\subset\co$.
\end{theorem}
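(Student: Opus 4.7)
\begin{sketch}
I plan to proceed by induction on $n$, with a case split according to the definition of $\widetilde{\ell}(\pi)$ in Section \ref{sec6.2}. The base case $n=0$ is immediate.

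For the inductive step, I split into two sub-cases according to whether $\widetilde{\ell}_1$ is $2\ell_1$ (the case $\ell_1\ge\ell_2$) or $2\ell_1+1$ (the case $\ell_1=\ell_2-1$). In the first sub-case, the plan is to mirror the proof of Theorem \ref{am}: apply Lemma \ref{erL3} with $2n_1=2\ell_1$ to an $F$-rational orbit $\co_\Gamma\subset\cal{O}_{\widetilde{\ell}}$; the inner Fourier coefficient is the Fourier--Jacobi descent $\pi_2=\cal{D}^{\rm{FJ}}(\pi)$, whose multiplicity is $1$ by Corollary \ref{u4}, since the irreducibility hypothesis on $\cal{D}^{\rm{FJ}}(\pi)$ pins down exactly one of the candidate symbols $\cal{DG}_{\rm{un}}^\pm$ or $\cal{DG}_\theta^\pm$ appearing in Theorem \ref{u5}. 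The outer Fourier coefficient is then handled by the induction hypothesis applied to $\pi_2\in\rm{Irr}(\sp_{2(n-\ell_1)}(\Fq))$, whose descent sequence inherits irreducibility and whose wavefront partition matches the tail $(\widetilde{\ell}_2,\widetilde{\ell}_3,\ldots)$.

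In the second sub-case, where $\widetilde{\ell}_1=\widetilde{\ell}_2=2\ell_1+1$ is odd, Lemma \ref{erL3} does not apply directly since it requires the top part $2n_1$ to be even. Moreover, by Proposition \ref{u5.5} and Proposition \ref{u7}, $\pi_2$ falls into the first sub-case of its own sequence with $\widetilde{\ell}_1(\pi_2)=2\ell_2=2\ell_1+2$, so the tail $(\widetilde{\ell}_3,\widetilde{\ell}_4,\ldots)$ of $\widetilde{\ell}(\pi)$ matches $\widetilde{\ell}(\pi_3)$ only after two descent steps rather than one. My plan here is to establish a two-step analogue of Lemma \ref{erL3} that strips the whole block $((2\ell_1+1)^2)$ at once: for suitable $F$-rational orbits $\co_1\subset\cal{O}_{(2\ell_1,1^{\ldots})}$ and $\co_2\subset\cal{O}_{(2\ell_2,1^{\ldots})}$ inside the respective symplectic groups along the descent chain, and a suitable $\co_\Gamma\subset\cal{O}_{\widetilde{\ell}}$, one has
\[
\dim\cal{F}(\pi,\co_\Gamma,\psi)=\dim\cal{F}\bigl(\cal{F}(\cal{F}(\pi,\co_1,\psi),\co_2,\psi'),\co_{\Gamma''},\psi''\bigr),
\]
where $\co_{\Gamma''}$ is an $F$-rational orbit attached to the tail $(\widetilde{\ell}_3,\widetilde{\ell}_4,\ldots)$ and the quadratic-form data on $\co_\Gamma$ is obtained from those on $\co_1,\co_2,\co_{\Gamma''}$ via (\ref{sgn}). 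Given this identity, the inner double Fourier coefficient equals $\pi_3$ with multiplicity $1$ (one factor from each descent step by Corollary \ref{u4}), and the outer is handled by induction applied to $\pi_3$.

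The hard part is this two-step exchange-roots identity. Unlike Lemma \ref{erL3}, where a single even Jordan block produces a clean staircase decomposition of $\fg_1$ admitting a Lagrangian splitting under the symplectic pairing, the double odd block $(2\ell_1+1)^2$ couples the two blocks via an additional symplectic factor, so one must invoke the full Exchanging Roots Lemma \ref{erl} for several carefully chosen pairs $(X,Y)$ inside $G_{\ge 1}$, rearrange the unipotent radical attached to $\cal{O}_{\widetilde{\ell}}$ into the composed form of two successive descents, and track the resulting discriminant via (\ref{sgn}) to ensure that the realized $F$-rational class lies in $\cal{O}_{\widetilde{\ell}}$ rather than in a neighboring stable orbit.
\end{sketch}
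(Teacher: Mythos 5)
Your first sub-case is exactly the paper's argument: the theorem is proved by mirroring Theorem \ref{am}, peeling off the top part $2\ell_1$ with Lemma \ref{erL3}, using the multiplicity-one statements of Corollaries \ref{u2}/\ref{u4} together with the irreducibility hypothesis to identify the inner Fourier coefficient with $\pi_2$, and inducting on $n$. That part is fine.

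The problem is your second sub-case. It is vacuous under the hypothesis of the theorem, and you have not noticed this. By Proposition \ref{u5.5}, the equality $\ell_{i+1}=\ell_i+1$ holds if and only if $\rm{rank}(\cal{DG}^{\epsilon}(\Lambda_i))=\rm{rank}(\cal{DG}^{-\epsilon}(\Lambda_i))$, and by Theorem \ref{u5} this rank equality is precisely the condition under which $\cal{D}^{\rm{FJ}}(\pi_i)$ decomposes as $\pi_{\Lambda^+}+\pi_{\Lambda^-}$, i.e.\ fails to be irreducible. So the assumption that every $\cal{D}^{\rm{FJ}}(\pi_i)$ is irreducible forces $\ell_{i+1}\le\ell_i$ for all $i$, hence $\widetilde{\ell}_i=2\ell_i$ throughout, $\widetilde{\ell}=\hat{\ell}$ is a partition with all parts even, and the block $((2\ell_1+1)^2)$ never appears. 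Consequently the ``hard part'' you identify --- the two-step exchanging-roots identity stripping a double odd block --- is not needed at all; and as written it is also the weak point of your proposal, since you only announce a plan to establish that identity rather than prove it. If you delete the second sub-case and instead insert the one-line observation above (irreducibility of each descent step excludes $\ell_{i+1}=\ell_i+1$), your argument becomes complete and coincides with the paper's.
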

\subsection{Wavefront sets of representations in Case (C)}

In \cite{GRS}, the wavefront sets of automorphic cuspidal representations have been calculated by the descent method, and it is corresponding to the array $\hat{\ell}(\pi)$. By the proof of Theorem 2.7 in \cite{GRS}, we can conclude that the array $\hat{\ell}(\pi)$ is actually a partition, and $\hat{\ell}(\pi)$ is of form $(2\ell_1,\cdots,2\ell_r)$. Applying the same argument on the finite fields case, we know that the $F$-stable nilpotent orbit $\wco_{\hat{\ell}(\pi)}$ is the wavefront set of $\pi$. Moreover, by replacing \cite[Lemma 2.6]{GRS} with Lemma \ref{erL3}, we get following multiplicity one theorem.

\begin{theorem} \label{main3}
Assume that $\pi$ is an irreducible cuspidal representation of $\sp_{2n}\fq$. Then $\wco_{\hat{\ell}(\pi)}$ is the wavefront set of $\pi$, and for any $F$-rational nilpotent orbit $\co\subset \wco_{\hat{\ell}{(\pi)}}$, we have
\[
\langle \pi,\Gamma_\gamma\rangle\le1
\]
where $\gamma=\{X,H,Y\}$ with $X\in \co$.
\end{theorem}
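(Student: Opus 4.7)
The plan is to induct on $n$, mirroring the strategy of \cite[Theorem 2.7]{GRS} but carried out in the finite-field setting and with the exchanging-roots machinery of Lemma \ref{erL3} replacing its archimedean analogue. The base case $n=0$ is trivial, and the inductive hypothesis gives both the shape of the wavefront set and the multiplicity bound for all cuspidal representations of $\sp_{2m}\fq$ with $m<n$.

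First I would pin down the top part of $\hat{\ell}(\pi)$. By Theorem \ref{6c}, there is a character $\psi$ realizing the maximum $\ell_0 = \ell^{\rm{FJ}}_{0,\psi}(\pi)$, and moreover $\pi' := \CD^\rm{FJ}_{\ell_0,\psi}(\pi)$ is itself an irreducible cuspidal representation of $\sp_{2(n-\ell_0)}\fq$. This immediately tells us that some $F$-rational orbit inside $\wco_{(2\ell_0,1^{2(n-\ell_0)})}$ supports $\pi$. To rule out larger $F$-stable nilpotent orbits, I invoke Lemma \ref{g1} and Lemma \ref{g2}: because $\pi$ is cuspidal, every parabolic induction has zero inner product with $\pi$, so if $\pi$ had a nonzero Fourier coefficient on any $\wco_{((2\ell_0+2i+1)^2,\dots)}$ with $i\ge 0$, then by Lemma \ref{g2} it would also have a nonzero coefficient on some $\wco_{(2m,1^{\cdots})}$ with $2m>2\ell_0+1$, contradicting the maximality of $\ell_0$. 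This forces the first part of the wavefront-set partition to be exactly $2\ell_0 = \hat{\ell}(\pi)_1$, and in particular shows that $\hat{\ell}(\pi)$ consists of even parts and, as an easy consequence, is a bona fide partition of the form $(2\ell_1,\dots,2\ell_r)$.

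Next I would use Lemma \ref{erL3} to compose Fourier coefficients: for any $F$-rational orbit $\co_\Gamma \subset \wco_{\hat{\ell}(\pi)}$ with $\Gamma=((2\ell_0,d_2,d_3,\dots),(q_i))$, the lemma gives
\[
\dim \cal{F}(\pi,\co_\Gamma,\psi) = \dim \cal{F}\bigl(\cal{F}(\pi,\co_{((2\ell_0,1^{2(n-\ell_0)}),(q_1))},\psi),\co_{((d_2,d_3,\dots),(q_i'))},\psi\bigr),
\]
where the inner Fourier coefficient is exactly a subrepresentation of $\pi' = \CD^\rm{FJ}_{\ell_0,\psi}(\pi)$. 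Since $\pi'$ is irreducible cuspidal, and since the partition $(\hat{\ell}(\pi)_2,\hat{\ell}(\pi)_3,\dots)$ is precisely $\hat{\ell}(\pi')$, the inductive hypothesis applied to $\pi'$ yields that $\wco_{(d_2,d_3,\dots)}$ supports $\pi'$ exactly when it is contained in $\wco_{\hat{\ell}(\pi')}$. Dimension monotonicity then transfers back: $\wco_{\hat{\ell}(\pi)}$ is the wavefront set of $\pi$, and the displayed identity bounds the outer multiplicity by the inner one, i.e.\ $\langle \pi,\Gamma_\gamma\rangle \le \langle \pi',\Gamma_{\gamma'}\rangle \le 1$ by induction.

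The main obstacle I anticipate is the verification that $\hat{\ell}(\pi)$ is a partition (monotonicity $\ell_i \ge \ell_{i+1}$) for cuspidal $\pi$: unlike Case (A), where Corollary \ref{da} is immediate, and unlike Case (B), where a modification $\widetilde{\ell}$ had to be introduced, in the cuspidal case this monotonicity must be extracted from the structural rigidity of the descent (Theorem \ref{6c}) together with the symbol-theoretic constraints from the explicit description of cuspidal Lusztig parameters (mentioned at the end of the proof of Theorem \ref{6c}). This is the step that in \cite{GRS} occupies most of the proof of their Theorem 2.7, and translating it faithfully to the finite-field setting — in particular checking that the iterated descents never produce an up-then-down pattern like Example \ref{ue} — is the delicate part. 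Once this is in hand, the rest of the argument is a clean induction driven by Lemma \ref{erL3}.
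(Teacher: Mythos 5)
Your proposal follows essentially the same route as the paper: the wavefront-set computation is the finite-field transcription of the proof of \cite[Theorem 2.7]{GRS} (using Lemmas \ref{g1}--\ref{g2}, which apply directly since $\pi$ is cuspidal), the top part is pinned down by Theorem \ref{6c}, and the multiplicity bound comes from replacing \cite[Lemma 2.6]{GRS} by Lemma \ref{erL3} and running the same induction as in Theorem \ref{am}, which is exactly what the paper does. The one step you flag as delicate — that $\hat{\ell}(\pi)$ is a partition with even parts — is handled in the paper by the same appeal to the structure of the GRS argument, so your treatment is consistent with the paper's.
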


\begin{proof}
We only prove the multiplicity one part of this theorem. Let $\psi$ and $\psi'$ be two characters of $\Fq^\times$ not in the same square class.
 If the descents
$
\CD^\rm{FJ}_{\ell_1, \psi}(\pi)
$
and
$
\CD^\rm{FJ}_{\ell_1, \psi'}(\pi)
$
 of $\pi$ are either 0 or an irreducible representation in Case (C), then applying similar argument in the proof of Theorem \ref{am}, we would get the multiplicity one result. Note that the first part $\ell_1$ of $\hat{\ell}(\pi)$ is the first occurrence index $\rm{max}\{\ell^{\rm{FJ}}_{0,\psi}(\pi),\ell^{\rm{FJ}}_{0,\psi'}(\pi)\}$ of $\pi$. So the multiplicity one result follows from Theorem \ref{6c}.

\end{proof}

\subsection{Construction of $F$-rational nilpotent orbit $\co$}\label{sec8.4}
Let $\pi\in\rm{Irr}(\sp_{2n}\fq)$ be an irreducible representation in either Case (A), (B), or (C). For Case (B), we additionally assume that $\pi$ has an irreducible largest descent index. In Theorem \ref{am}, Theorem \ref{am2}, and Theorem \ref{main3}, we claim that there exist some $F$-rational nilpotent orbits $\co \subset \wco_{\hat{\ell}{(\pi)}}$ such that the irreducible representation $\pi$ appears in the GGGR of $\co$, and state some multiplicity one results. Although the construction of $\co$ has been implied in their proof, in this subsection, we shall sketch the construction $F$-rational nilpotent orbits $\co$. Suppose that $\hat{\ell}{(\pi)}=(2\ell_1,\cdots,2\ell_r)$. We construct $\co$ as follows.

(1) Calculate the descents
 \[
\CD^\rm{FJ}_{\ell_1, \psi}(\pi)=\cal{F}(\pi,\co_{((2\ell_1,1^{2(n-\ell)}),(+))},\psi)
\]
and
\[
\CD^\rm{FJ}_{\ell_1, \psi'}(\pi)=\cal{F}(\pi,\co_{((2\ell_1,1^{2(n-\ell)}),(-))},\psi),
\]
 and show that they are either irreducible representations of the same type of $\pi$ or 0, where $\psi'$ is another nontrivial additive character of $\Fq$ not in the same square class of $\psi$. This step has already been done in section \ref{sec6}.

(2) Assume that $\CD^\rm{FJ}_{\ell_1, \psi}(\pi)=\pi'\in \rm{Irr}(\sp_{2(n-\ell_1)})$ (resp. $\CD^\rm{FJ}_{\ell_1, \psi'}(\pi)=\pi'$). By induction on $n$, we can construct all $F$-rational nilpotent orbits $\co' \subset \wco_{2\hat{\ell}{(\pi')}}$ such that
     \[
     \langle \pi',\Gamma_{\gamma'}\rangle=1,
     \]
     where $\Gamma_{\gamma'}$ is the GGGRs corresponding to $\co'$.

(3) Applying Lemma \ref{erL3}, we get all $F$-rational nilpotent orbits $\co \subset \wco_{2\hat{\ell}{(\pi)}}$ such that
     \[
     \langle \pi,\Gamma_\gamma\rangle=1.
     \]


\begin{thebibliography}{CERP}

\bibitem[AA]{AA}
P. N. Achar and A.-M. Aubert, {\it Supports unipotents de faisceaux caract\'eres (French,
with English and French summaries)}, J. Inst. Math. Jussieu {\bf6} (2007), no. 2, 173-¨C207.

\bibitem[AKP]{AKP}
A.-M. Aubert, W. Kra\'skiewicz, T. Przebinda, {\it Howe correspondence and Springer correspondence for dual pairs over a finite field.} Lie algebras, Lie superalgebras, vertex algebras and related topics, Proc. Sympos. Pure Math., {\bf 92}, Amer. Math. Soc., Providence, RI, 2016, 17--44.

\bibitem[AM]{AM}
J. Adams, A. Moy, {\it Unipotent representations and reductive dual pairs over finite fields}, Trans. Amer.
Math. Soc. {\bf 340} (1993) 309--321.

\bibitem[AGRS]{AGRS}
A. Aizenbud, D. Gourevitch, S. Rallis, G. Schiffmann,
{\it Multiplicity one theorems},
Ann. of Math. (2) {\bf 172} (2010), no. 2, 1407--1434.

\bibitem[AMR]{AMR}
A.-M. Aubert, J. Michel, R. Rouquier, {\it Correspondance de Howe pour les groupes reductifs sur les corps finis}, Duke Math. J. {\bf83}, 2 (1996), 353--397.


\bibitem[Ato]{Ato}
H. Atobe, \textit{The local theta correspondence and the local Gan-Gross-Prasad conjecture for the symplectic-metaplectic case.}  Math. Ann. {\bf 371} (2018), no. 1-2, 225--295.

\bibitem[BM]{BM}
D. Barbasch, A. Moy, {\it Local character expansions} Ann. Sci. \'Ecole
Norm. Sup. (4), {\bf 30} (5), (1997), 553--567.


\bibitem[BMSZ1]{BMSZ1}
D. Barbasch, J.-J. Ma, B.-Y. Sun, and C.-B. Zhu, {\it Special unipotent representations : orthogonal and symplectic groups}, arXiv:1712.05552.

\bibitem[BMSZ2]{BMSZ2}
\bysame, {\it On the notion of metaplectic Barbasch-Vogan duality}, arXiv:2010.16089.





\bibitem[C]{C}
R. Carter, {\it Finite Groups of Lie Type, Conjugacy Classes and Complex Characters}, John Wiley $\&$ Sons, England, 1985.

\bibitem[CM]{CM}
D. H. Collingwood and W. M. McGovern, {\it  Nilpotent Orbits in Semisimple
Lie Algebras}, Van Nostrand Reinhold Mathematics Series (1992).

\bibitem[D1]{D1}
DeBacker, S,  {\it Homogeneity results for invariant distributions of a reductive p-adic group}, Ann. Sci. \'Ecole Norm. Sup. (4) {\bf35} (2002) no.3, 391--422.

\bibitem[D2]{D2}
\bysame, {\it Parametrizing nilpotent orbits via Bruhat-Tits theory}, Ann. Math. (2) {\bf156} (1), 295--332 (2002).

\bibitem[DL]{DL}
P. Deligne, G. Lusztig, {\it Representations of reductive groups over finite fields}, Ann. of Math. {\bf 103} (1976), 103--161.

\bibitem[DM]{DM}
F. Digne and J. Michel, {\it Representations of finite groups of Lie type}, London Mathematical Society Student Texts (Book 21). Cambridge University Press, 1991.

\bibitem[Ge1]{Ge1}
Geck, Meinolf, {\it On the average values of the irreducible characters of finite groups of
Lie type on geometric unipotent classes}, Doc. Math. {\bf 1}(15) (1996), 293--317.



\bibitem[Ge2]{Gec}
M. Geck, {\it A first guide to the character theory of finite groups of Lie type}, arXiv:1705.05083 (2017).

\bibitem[Ge3]{Ge}
\bysame, {\it Generalised Gelfand-Graev representations in bad characteristic?} Transform. Groups {\bf26} (2021), no. 1, 305--326.






\bibitem[GGP1]{GGP1}
W. T. Gan, Benedict H. Gross and D. Prasad, {\it Symplectic local root numbers, central critical L-values and restriction problems in the representation theory of classical groups}, Ast\'erisque. No. {\bf346} (2012), 1--109.

\bibitem[GGP2]{GGP2}
\bysame, {\it Restrictions of  representations  of  classical  groups: examples}, Ast\'erisque. No.{\bf346} (2012), 111--170.






\bibitem[Ger]{Ger}
P. G\'erardin, {\it Weil representations associated to finite fields}, J. Algebra {\bf 46} (1977), 54--101.

\bibitem[Gi]{Gi}
D. Ginzburg, {\it Certain conjectures relating unipotent orbits to automorphic representations}, Israel J. Math. {\bf 151} (2006), 323--355.

\bibitem[GI]{GI}
W. T. Gan, A. Ichino, \textit{The Gross-Prasad conjecture and local theta correspondence.} Invent. Math. {\bf 206} (2016), no. 3, 705--799.


\bibitem[GM]{GM}
M. Geck and G. Malle, {\it On the existence of a unipotent support for the irreducible
characters of a finite group of Lie type}, Trans. Amer. Math. Soc. {\bf 352} (2000), no. 1,
429¨C456.


\bibitem[GRS1]{GRS}
D. Ginzburg, S. Rallis, D. Soudry, {\it On Fourier coefficients of automorphic forms
of symplectic groups.} manuscripta math. {\bf 111}, (2003), 1¨C-16.


\bibitem[GRS2]{GRS2}
\bysame, {\it The descent map from automorphic representations of $\GGL_n$ to classical groups}, World Scientific Publishing Co. Pte. Ltd., Hackensack, NJ, 2011.



\bibitem[GP1]{GP1}
B. Gross, D. Prasad, \textit{On the decomposition of a representation of $\mathrm{SO}_n$ when restricted to $\mathrm{SO}_{n-1}$.} Canad. J. Math.
44 (1992), 974--1002.

\bibitem[GP2]{GP2}
\bysame, \textit{On irreducible representations of $\mathrm{SO}_{2n+1}\times\mathrm{SO}_{2m}$.} Canad. J. Math. 46 (1994), 930--950.

\bibitem[GZ]{GZ}
R. Gomez, C. Zhu, {\it Local theta lifting of generalized Whittaker models associated to nilpotent orbits}, Geom. Funct. Anal. {\bf 24} , no. 3, 796--853 (2014).

\bibitem[H1]{H}
R. E. Howe, {\it Two conjectures about reductive p-adic groups}, pp. 377-380 in {\it Harmonic analysis on homogeneous spaces} (Williamstown, MA, 1972), edited by C. C. Moore, Proc. Sympos. Pure Math. 26, Amer. Math. Soc., Providence, RI, 1973.

\bibitem[H2]{H2}
\bysame,  {\it Wave front sets of representations of Lie groups}, Automorphic forms,
representation theory and arithmetic (Bombay, 1979), pp. 117-140, Tata Inst.
Fund. Res. Studies in Math., 10, Tata Institute of Fundamental Research, Bombay, 1981.


\bibitem[HC]{HC}
Harish-Chandra, {\it Admissible invariant distributions on reductive p-adic
groups}, University Lecture Series 16, Amer. Math. Soc., Providence, RI, 1999.



\bibitem[JLZ]{JLZ}
D. Jiang, D. Liu, L. Zhang, {\it Arithmetic wavefront sets and generic L-packets}, arXiv:2207.04700 (2022).
Search...
.


\bibitem[JZ1]{JZ1}
D. Jiang, L. Zhang, {\it Local root numbers and spectrum of the local descents for orthogonal groups: $p$-adic case,}  Algebra Number Theory  {\bf 12} (2018), no. 6, 1489--1535.

\bibitem[JZ2]{JZ2}
\bysame, {\it Arthur parameters and cuspidal automorphic modules of classical groups,}  Ann. of Math. (2) {\bf 191} (2020), no. 3, 739--827.



\bibitem[K1]{K1}
N. Kawanaka, {\it Fourier transforms of nilpotently supported invariant functions on a finite simple Lie algebra,} Proc. Japan Acad. Ser. A Math. Sci. {\bf 57} (1981), 461Å¡C-464.

\bibitem[K2]{K2}
\bysame, {\it Generalized Gelfand¨CGraev representations and Ennola duality},
in Algebraic Groups and Related Topics (Kyoto/Nagoya, 1983), Adv. Stud. Pure
Math. {\bf6}, North-Holland, Amsterdam, 1985, 175--206.

\bibitem[K3]{K3}
\bysame, {\it Shintani lifting and Gel'fand-Graev representations,}
In The Arcata Conference on Representations of Finite Groups (Arcata,
Calif., 1986), Proc. Sympos. Pure Math., {\bf 47}
Amer. Math. Soc., Providence, RI, 1987, 47-¨C163.


\bibitem[L1]{L1}
G. Lusztig, {\it Irreducible representations of finite classical groups}, Invent. Math. {\bf43} (1977), 125--175.


\bibitem[L2]{L2}
\bysame, {\it Some problems in the representation theory of finite Chevalley
groups}, in The Santa Cruz Conference on Finite Groups (Univ. California,
Santa Cruz, CA, 1979), Proc. Sympos. Pure Math. {\bf 37}, American Mathematical
Society, Providence, RI, 1980, 313--317.


\bibitem[L3]{L3}
\bysame, {\it Unipotent characters of the symplectic and odd orthogonal groups over a finite field}, Invent. Math. {\bf64} (1981), 263--296.

\bibitem[L4]{L4}
\bysame, {\it Unipotent characters of the even orthogonal groups over a finite field}, Trans. Amer. Math. Soc. {\bf272} (1982), 733--751.

\bibitem[L5]{L5}
\bysame, {\it Characters of reductive groups over a finite field}, Princeton Univ. Press, Princeton, N.J., 1984.

\bibitem[L6]{L6}
\bysame, {\it On the representations of reductive groups with disconnected centre,} in: Orbites unipotentes et
repr\'esentations I. Groupes finis et alg\`ebres de Hecke, Ast\'erisque {\bf 168}, Soci\'et\'e Math\'ematique de France, Paris
(1988), 157-166.

\bibitem[L7]{L7}
\bysame, {\it A unipotent support for irreducible representations}, Adv. Math. {\bf94} (1992),
no. 2, 139-179.


\bibitem[LM]{LM}
 H.Y.Loke, J. Ma, {\it Invariants and K-spectrums of local theta lifts}, Compositio
Mathematica (2015) {\bf 151}, n. 1, 179--206.

\bibitem[LS]{LS}
G. Lusztig, B. Srinivasan, {\it The characters of the finite unitary groups}, J. Algebra {\bf49} (1977), 167-171.



\bibitem[LW1]{LW2}
D. Liu, Z, Wang,  {\it Descents of unipotent representations of finite unitary groups}, Trans. Amer. Math. Soc. {\bf373} (2020), no. 6, 4223-4253.

\bibitem[LW2]{LW3}
\bysame,  {\it Descents of unipotent cuspidal representations of finite classical groups},  Manuscripta Math. {\bf 165} (2021), no. 1-2, 159--189.

\bibitem[LW3]{LW4}
\bysame,  {\it On the Gan-Gross-Prasad problem for finite unitary groups},  Math. Z. {\bf 297} (2021), no. 3--4, 997--1021.



\bibitem[LZ]{LZ}
B. Liu and Q. Zhang, {\it Uniqueness of certain Fourier-Jacobi models over finite fields}, Finite Fields Appl. {\bf58} (2019), 70-123.



\bibitem[Moe]{M}
C. M\oe glin, {\it Front d'onde des repr\'esentations des groupes classiques p-adiques}, (French) Amer. J. Math. {\bf 118} (1996), no. 6, 1313--1346.

\bibitem[Ma]{Ma}
G. Malle, {\it Cuspidal characters and automorphisms}, Adv. Math. 320 (2017), 887-903.



\bibitem[MW]{MW}
C. M\oe glin, J.-L. Waldspurger, {\it Mod\`eles de Whittaker d\'eg\'en\'er\'es pour
des groupes p-adiques}. Math. Z. {\bf 196} (1987), 427--452.


\bibitem[MVW]{MVW}
C. M\oe glin, M.-F. Vign\'eras, J.-L. Waldspurger, \textit{Correspondances de Howe sur un
corps $p$-adique}, Springer Verlag, Lecture Notes in Math. {\bf 1291}, Berlin, Heidelberg, 1987.


\bibitem[Ni]{N}
 C.Nien, {\it A proof of the finite field analogue of Jacquet¡¯s conjecture}, Am.J.Math. {\bf136} (2014), 653¨C674.



\bibitem[NPS]{NPS}
 M. E. Novodvorski\v{i}, I. Pjatecki\v{i}-\v{S}apiro, {\it   Generalized Bessel models for the symplectic group of rank 2}, (Russian) Mat. Sb. (N.S.) {\bf 90(132)} (1973), 246--256, 326.

\bibitem[O]{O}
E. T. Okada {\it The wavefront set of spherical Arthur representations}, arXiv:2107.10591.


\bibitem[P1]{P3}

S.-Y. Pan, {\it Howe correspondence of unipotent characters for a finite symplectic/even-orthogonal dual
pair}, arXiv:1901.00623 (2019).


\bibitem[P2]{P2}

\bysame, {\it Lusztig Correspondence and Howe Correspondence for Finite Reductive Dual Pairs}, arXiv:1906.01158 (2019).

\bibitem[Ro]{Ro}
E-A. Roditty, {\it On Gamma factors and Bessel functions for representations of general linear groups over finite fields}, M.s.c. Thesis, Tel-Aviv University.

\bibitem[Sho]{Sh}
 T. Shoji, {\it On the computation of unipotent characters of finite classical groups},
Appl. Algebra Engrg. Comm. Comput. 7(3) (1996), 165--174.

\bibitem[Sha]{Sh1}
J. Shalika, {\it The multiplicity one theorem for $\GGL_n$}, Ann. Math. {\bf 100} (1974) 171--193.


\bibitem[Sr1]{S}
B. Srinivasan, {\it Weil representations of finite classical groups}, Invent. Math. {\bf 51} (1979), 143--153.


\bibitem[Sr2]{Sr2}
B. Srinivasan, {\it Representations of finite Chevalley groups}, Lecture Notes in Math. {\bf 764}, Springer-Verlag, 1979.

\bibitem[T]{T}
J. Taylor, {\it Generalized Gelfand-Graev representations in small characteristics}, Nagoya Math. J. {\bf224} (2016), 93--167.

\bibitem[Wal1]{Wal}
J.-L. Waldspurger, {\it Int\'egrales orbitales nilpotentes et endoscopie pour les groupes classiques non ramifi\'es}, Ast\'erisque  No. {\bf 269} (2001), vi+449 pp.


\bibitem[Wal2]{Wal2}
\bysame, {\it Repr\'esentations de r\'eduction unipotente pour
$\so(2n + 1)$, III: exemples de fronts d\'onde}, Algebra Number Theory  No. {\bf 12} (5) (2018), 1107--1171.


\bibitem[Wal3]{Wal3}
\bysame,  {\it Fronts d\'onde des repr\'esentations temp\'er\'ees et de
r\'eduction unipotente pour $\so(2n + 1)$},  Tunis. J. Math  {\bf2} (1) (2020) 43--95.






\bibitem[Wang1]{Wang1}
Z. Wang,  {\it On the Gan-Gross-Prasad problem for finite classical groups},  Adv. Math. 393 (2021), Paper No. 108095, 72 pp.

\bibitem[Wang2]{Wang2}
Z. Wang,  {\it Lusztig correspondence and the Gan-Gross-Prasad problem}, arXiv:2104.14473 (2021).



\bibitem[Y]{Y}
 H. Yamashita, {\it On Whittaker vectors for generalized Gel'fand-Graev representations of semisimple Lie groups}, J. Math. Kyoto Univ. 26 (1986), no. {\bf2}, 263--298.
%\bibitem[Y]{Y}
%J.-K. Yu, {\it Construction of tame supercuspidal representations,} J. Amer. Math. Soc. {\bf 14} (2001), no. 3, 579--622.

\bibitem[Z]{Z}
C.-B. Zhu, {\it Local theta correspondence and nilpotent invariants}, Proc. Sympos. Pure Math. {\bf 101} (2019), 427-450.

\end{thebibliography}
\end{document}